\documentclass[12pt,oneside, italian]{article}

\usepackage[english]{babel}

\usepackage{cancel}

\usepackage{cite}

\usepackage{latexsym}

\usepackage{amssymb,amsthm,amsmath}

\usepackage{longtable}
\usepackage{graphicx,color}



\textwidth = 16.50cm

\oddsidemargin = -1cm

\evensidemargin = -1cm

\newtheorem{theorem}{Theorem}[section]

\newtheorem{proposition}[theorem]{Proposition}

\newtheorem{lemma}[theorem]{Lemma}

\newtheorem{corollary}[theorem]{Corollary}

\newtheorem{remark}[theorem]{Remark}

{\theoremstyle{definition}

\newtheorem*{definition*}{Definition}

}

\newtheorem*{proposition*}{Proposition}

\newtheorem*{corollary*}{Corollary}

\newtheorem*{lemma*}{Lemma}

\newtheorem*{remark*}{Remark}

\newcommand{\cX}{\mathcal X}
\newcommand{\cL}{\mathcal L}

\newcommand{\fq}{\mathbb {F}_q}

\newcommand{\aut}{\mathrm{Aut}}
\newcommand{\fqq}{\mathbb{F}_{q^2}}

\newcommand{\fqnq}{\mathbb{F}_{q^{2n}}}

\title{AG codes and AG quantum codes from the GGS curve}
\author{D. Bartoli, M. Montanucci, G. Zini}

\date{}

\begin{document}

\maketitle 

\begin{abstract}
In this paper, algebraic-geometric (AG) codes associated with the GGS maximal curve are investigated. The Weierstrass semigroup at all $\mathbb F_{q^2}$-rational points of the curve is determined; the Feng-Rao designed minimum distance is computed for infinite families of such codes, as well as the automorphism group. As a result, some linear codes with better relative parameters with respect to one-point Hermitian codes are discovered.
Classes of quantum and convolutional codes are provided relying on the constructed AG codes.
\end{abstract}

{\bf Keywords: } GGS curve, AG code, quantum code, convolutional code, code automorphisms.

{\bf MSC Code: } 94B27.
\footnote{
This research was partially supported by Ministry for Education, University
and Research of Italy (MIUR) (Project PRIN 2012 ``Geometrie di Galois e
strutture di incidenza'' - Prot. N. 2012XZE22K$_-$005)
 and by the Italian National Group for Algebraic and Geometric Structures
and their Applications (GNSAGA - INdAM).

URL: Daniele Bartoli (daniele.bartoli@dmi.unipg.it), Maria Montanucci (maria.montanucci@unibas.it), Giovanni Zini (gzini@math.unifi.it).
}

\section{Introduction}

In \cite{Goppa1,Goppa2} Goppa used algebraic curves to construct linear error correcting codes, the so called algebraic geometric codes (AG codes).
The construction of an AG code with alphabet a finite field $\mathbb F_q$ requires that the underlying curve is $\mathbb F_q$-rational and involves two $\mathbb F_q$-rational divisors $D$ and $G$ on the curve.

In general, to construct a ``good" AG code over $\mathbb F_q$ we need a curve $\mathcal X$ with low genus $g$ with respect to its number of $\mathbb F_q$-rational points. In fact, from the Goppa bounds on the parameters of the code it follows that the relative Singleton defect is upper bounded by the ratio $g/N$, where $N$ can be as large as the number of $\mathbb F_q$-rational points of $\mathcal X$ not in the support of $G$.
Maximal curves over $\mathbb F_q$ attain the Hasse-Weil upper bound for the number of $\mathbb F_q$-rational points with respect to their genus and for this reason they have been used in a number of works. Examples of such curves are the Hermitian curve, the GK curve \cite{GK2009}, the GGS curve \cite{GGS}, the Suzuki curve \cite{DL1976}, the Klein quartic when $\sqrt{q}\equiv6\pmod7$ \cite{MEAGHER2008}, together with their quotient curves. Maximal curves often have large automorphism groups which in many cases can be inherited by the code: this can bring good performances in encoding \cite{Joyner2005} and decoding \cite{HLS1995}.

Good bounds on the parameters of one-point codes, that is AG codes arising from divisors $G$ of type $nP$ for a point $P$ of the curve, have been obtained by investigating the Weierstrass semigroup at $P$. These results have been later generalized to codes and semigroups at two or more points; see e.g. \cite{MATTHEWS2001,HOMMA1996,HK2001,CT2005,CK2009,LC2006,Kim1994}.

AG codes from the Hermitian curve have been widely investigated; see \cite{HK2006,HK2005,Tiersma1987,DK2011,HK2006_2,YK1991,Stichtenoth1988} and the references therein. Other constructions based on the  Suzuki curve and the curve with equation $y^q + y = x^{q^r+1}$ can be found in \cite{Matthews2004} and \cite{ST2014}.
More recently, AG Codes from the GK curve have been constructed in \cite{FG2010,CT2016,BMZ}.

In the present work we investigate one-point AG codes from the $\mathbb F_{q^{2n}}$-maximal GGS curve, $n\geq5$ odd.
The GGS curve has more short orbits under its automorphism group than other maximal curves, see \cite{GOS}, and hence more possible structures for the Weierstrass semigroups at one point.
On the one hand this makes the investigation more complicated; on the other hand it gives more chances of finding one-point AG codes with good parameters.
One achievement of this work is the determination of the Weierstrass semigroup at any $\mathbb F_{q^2}$-rational point.

We show that the one-point codes at the infinite point $P_\infty$ inherit a large automorphism group from the GGS curve; for many of such codes, the full automorphism group is obtained.
Moreover, for $q=2$, we compute explicitly the Feng-Rao designed minimum distance, which improves the Goppa designed minimum distance.
As an application, we provide families of codes with $q=2$ whose relative Singleton defect goes to zero as $n$ goes to infinity.
We were not able to produce analogous results for an $\mathbb F_{q^2}$-rational affine point $P_0$, because of the more complicated structure of the Weierstrass semigroup.
In a comparison between one-point codes from $P_\infty$ and one-point codes from $P_0$, it turns out that the best codes come sometimes from $P_\infty$, other times from $P_0$; we give evidence of this fact with tables for the case $q=2$, $n=5$.

Note that in general, many of our codes are better than the comparable one-point Hermitian codes on the same alphabet.
In fact, let $C_{1}$ be a code from a one-point divisor $G_{1}$ on the $\mathbb F_{q^{2n}}$-maximal GGS curve with genus $g_1$, with alphabet $\mathbb F_{q^{2n}}$, length $N_{2}$, designed dimension $k_{1}^*=\deg G_{1}-g_{1}+1$, and designed minimum distance $d_{1}^*=\deg G_{1}-(2g_{1}-2)$. In the same way, let $C_{2}$ be a code from a one-point divisor $G_{2}$ on the $\mathbb F_{q^{2n}}$-maximal Hermitian curve with genus $g_2$, with the same alphabet $\mathbb F_{q^{2n}}$ and length $N_2=N_{1}$ as $C_{1}$, designed dimension $k_{2}^*=\deg G_{2}-g_{2}+1$, and designed minimum distance $d_{2}^*=\deg G_{2}-(2g_{2}-2)$.
In order to compare $C_1$ and $C_2$, we can choose $G_1$ and $G_2$ such that $k_1^*=k_2^*$. Then the difference $d_1^*-d_2^*$, like the difference $\delta_2^*-\delta_1^*$ between the designed Singleton defects, is equal to $g_2-g_1=\frac{1}{2}(q^{2n}-q^{n+2}+q^3-q^2)\gg0$.

Finally, we apply our results on AG codes to construct families of quantum codes and convolutional codes.

\section{Preliminaries}\label{Sec:Preliminaries}

\subsection{Curves and codes}\label{Sec:Preliminaries_Curves}

Let $\mathcal{X}$ be a projective, geometrically irreducible, nonsingular algebraic curve of genus $g$ defined over the finite field $\mathbb{F}_q$ of size $q$.
The symbols $\mathcal{X}(\mathbb{F}_q)$ and $\mathbb{F}_q(\mathcal{X})$ denote the set of $\mathbb{F}_q$-rational points and the field of $\mathbb{F}_q$-rational functions, respectively.
A divisor $D$ on $\mathcal{X}$ is a formal sum $n_1P_1+\cdots+n_rP_r$, where $P_i \in \mathcal{X}(\mathbb{F}_q)$, $n_i \in \mathbb{Z}$, $P_i\neq P_j$ if $i\neq j$.
The divisor $D$ is $\mathbb F_q$-rational if it coincides with its image $n_1P_1^q+\cdots+n_rP_r^q$ under the Frobenius map over $\mathbb F_q$.
For a function $f \in \mathbb{F}_q(\mathcal{X})$, $div(f)$ and $(f)_{\infty}$ indicate the divisor of $f$ and its pole divisor.
Also, the Weierstrass semigroup at $P$ will be indicated by $H(P)$.
The Riemann-Roch space associated with an $\mathbb F_q$-rational divisor $D$ is
$$\mathcal{L}(D) := \{ f \in \mathcal{X}(\mathbb{F}_q)\setminus\{0\} \ : \ div(f)+D \geq 0\}\cup\{0\}$$
and its dimension over $\mathbb{F}_q$ is denoted by  $\ell(D)$.

Let $P_1,\ldots,P_N\in \mathcal{X}(\mathbb{F}_q)$ be pairwise distinct  points and consider the divisor $D=P_1+\cdots+P_N$ and another $\mathbb F_q$-rational divisor $G$ whose support is disjoint from the support of $D$. The AG code $C(D,G)$ is the image of the linear map $\eta :  \mathcal{L}(G) \to \mathbb{F}_q^N$ given by $\eta(f) = (f(P_1),f(P_2) ,\ldots,f(P_N))$. The code has length $N$ and if $N>\deg(G)$  then $\eta$ is an embedding and the dimension $k$ of $C(D,G)$ is equal to $\ell(G)$. The minimum distance $d$ satisfies $d\geq d^*=N-\deg(G)$, where $d^*$ is called the designed minimum distance of $C(D,G)$; if in addition $\deg(G)>2g-2$, then by the Riemann-Roch Theorem $k=\deg(G)-g+1$; see \cite[Th. 2.65]{HLP}. The dual code $C^{\bot} (D,G)$ is an $AG$ code with dimension $k^{\bot}=N-k$ and minimum distance $d^{\bot}\geq \deg{G}-2g+2$. If $G=\alpha P$, $\alpha \in \mathbb{N}$, $P \in \mathcal{X}(\mathbb{F}_q)$, the AG codes ${C} (D,G)$ and ${C}^{\bot} (D,G)$ are referred to as one-point AG codes. Let $H(P)$ be the Weierstrass semigroup associated with $P$, that is 
$$H(P) := \{n \in \mathbb{N}_0 \ | \ \exists f \in \mathbb{F}_q(\mathcal{X}), (f)_{\infty}=nP\}= \{\rho_1=0<\rho_2<\rho_3<\cdots\}.$$

Denote by $f_{\ell}\in \mathbb{F}_q(\mathcal{X})$, $\ell\geq 1$, a rational function such that $(f_{\ell})_{\infty}=\rho_{\ell}P$. For $\ell \geq0$, define the \emph{Feng-Rao function} 
$$\nu_\ell := | \{(i,j) \in \mathbb{N}_0^2 \ : \ \rho_i+\rho_j = \rho_{\ell+1}\}|.$$ 
Consider ${C}_{\ell}(P)= {C}^{\bot}(P_1+P_2+\cdots+P_N,\rho_{\ell}P)$, $P,P_1,\ldots,P_N$ pairwise distint points in $\mathcal{X}(\mathbb{F}_q)$. The number 
$$d_{ORD} ({C}_{\ell}(P)) := \min\{\nu_{m} \ : \ m \geq \ell\}$$
is a lower bound for the minimum distance $d({C}_{\ell}(P))$ of the code ${C}_{\ell}(P)$, called the \emph{order bound} or the \emph{Feng-Rao designed minimum distance} of ${C}_{\ell}(P)$; see \cite[Theorem 4.13]{HLP}. Also, by \cite[Theorem 5.24]{HLP}, $d_{ORD} ({C}_{\ell}(P))\geq \ell+1-g$ and equality holds if $\ell \geq 2c-g-1$, where $c=\max \{m \in \mathbb{Z} \ : \ m-1 \notin H(P)\}.$ 

A numerical semigroup is called telescopic if it is generated by a sequence $(a_1,\ldots,a_k)$ such that 
\begin{itemize}
\item $\gcd(a_1, \ldots , a_k)=1$;
\item for each $i=2,\ldots,k$, $a_i/d_i \in \langle a_1/d_{i-1},\ldots, a_{i-1}/d_{i-1}\rangle$, where $d_i=\gcd(a_1,\ldots,a_i)$;
\end{itemize}
see \cite{KP}.
The semigroup $H(P)$ is called symmetric if $2g-1\notin H(P)$. The property of being symmetric for $H(P)$ gives rise to useful simplifications of the computation of $d_{ORD}(C_\ell(P))$, when $\rho_\ell >2g$. The following result is due to Campillo and Farr\'an; see \cite[Theorem 4.6]{CF}.

\begin{proposition} \label{campillo} Let $\cX$ be an algebraic curve of genus $g$ and let $P \in \cX(\mathbb{F}_q)$. If $H(P)$ is a symmetric Weierstrass semigroup then one has $$d_{ORD}(C_\ell(P))=\nu_{\ell},$$ for all $\rho_{\ell+1}=2g-1+e$ with $e \in H(P) \setminus \{0\}$. 
\end{proposition}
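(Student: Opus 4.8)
The plan is to prove Proposition~\ref{campillo} by exploiting the symmetry of $H(P)$ to show that, in the relevant range, the Feng--Rao function $\nu_\ell$ is already nondecreasing, so that the minimum defining $d_{ORD}(C_\ell(P))$ is attained at the first index. Concretely, I would fix $\ell$ with $\rho_{\ell+1}=2g-1+e$ for some $e\in H(P)\setminus\{0\}$, and show $\nu_{\ell}\leq\nu_{m}$ for every $m\geq\ell$; combined with the definition $d_{ORD}(C_\ell(P))=\min\{\nu_m:m\geq\ell\}$ this gives $d_{ORD}(C_\ell(P))=\nu_\ell$.

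The first step is to recall the closed form of $\nu_\ell$ coming from the theory of the order bound: for a symmetric semigroup, when $\rho_{\ell+1}\geq 2g-1$ one has a clean description of the set $N(\rho_{\ell+1})=\{(i,j)\in\N_0^2:\rho_i+\rho_j=\rho_{\ell+1}\}$. The key point is that symmetry, i.e.\ $2g-1\notin H(P)$, together with the standard fact that $H(P)$ contains every integer $\geq 2g$, forces a bijective pairing: writing $m=\rho_{\ell+1}$, for each nongap $a$ with $0\leq a\leq m$ the complement $m-a$ is a nongap precisely when $a$ is, except for a controlled set of exceptions governed by the gaps of $H(P)$ in $[0,m]$. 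Since all integers $\geq 2g$ are nongaps and $m\geq 2g-1$, the count $\nu_\ell=|N(m)|$ can be expressed as $m+1$ minus twice the number of gaps, plus a correction term that only involves how $m$ sits relative to $2g-1$; when $m=2g-1+e$ with $e$ a nongap, this correction stabilizes.

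The second and main step is the monotonicity: I would compare $\nu_\ell$ (for $\rho_{\ell+1}=2g-1+e$) with $\nu_m$ for $\rho_{m+1}=\rho_{\ell+1}+s$ where $s\in H(P)$, and show that adding a nongap to the target only enlarges (weakly) the number of representations. This is where one uses that $e\in H(P)\setminus\{0\}$: it guarantees $\rho_{\ell+1}$ itself lies in the ``stable'' range $\geq 2g-1$, and for any larger nongap $\rho_{m+1}$ the representation-counting formula is at least as large. I expect the main obstacle to be handling the boundary/exceptional pairs carefully --- precisely, pairs $(i,j)$ where one of $\rho_i,\rho_j$ is small (below $2g$) so that the ``all integers above $2g$ are nongaps'' shortcut does not immediately apply; one must check that the symmetry of the semigroup makes these exceptional contributions match up between consecutive values, rather than destroying monotonicity. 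Here invoking the Campillo--Farrán framework \cite[Theorem 4.6]{CF} directly, or reproving its combinatorial core, is the cleanest route.

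Finally, I would assemble the pieces: monotonicity of $\nu$ on $\{m:\rho_{m+1}\geq 2g-1+e,\ \rho_{m+1}-(2g-1)\in H(P)\}$ together with the general inequality $d_{ORD}(C_\ell(P))\geq \ell+1-g$ already quoted from \cite[Theorem 5.24]{HLP}, and the observation that $\nu_\ell=\ell+1-g$ in this range by a dimension count, yields $d_{ORD}(C_\ell(P))=\nu_\ell$ as claimed. The only genuinely delicate point, as noted, is the exceptional-pair bookkeeping, and I would isolate it as a short lemma on representations $a=\rho_i+\rho_j$ in a symmetric numerical semigroup before proving the proposition.
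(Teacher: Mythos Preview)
The paper does not prove this proposition at all: it is stated with the attribution ``The following result is due to Campillo and Farr\'an; see \cite[Theorem 4.6]{CF}'' and then used as a black box. So there is no proof in the paper to compare against, and your suggestion in the third paragraph to ``invoke the Campillo--Farr\'an framework \cite[Theorem 4.6]{CF} directly'' is circular---that \emph{is} the proposition.

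That said, your final paragraph already contains a correct and complete strategy, and it makes the preceding three paragraphs about monotonicity of $\nu$ unnecessary. If $\nu_\ell=\ell+1-g$ holds for the $\ell$ in question, then the trivial inequality $d_{ORD}(C_\ell(P))\leq\nu_\ell$ together with the cited bound $d_{ORD}(C_\ell(P))\geq\ell+1-g$ immediately gives $d_{ORD}(C_\ell(P))=\nu_\ell$; no monotonicity on any subset is needed. The only real content is the identity $\nu_\ell=\ell+1-g$, which you wave through ``by a dimension count''. This is where the argument is missing. A genuine proof goes as follows: write $m=\rho_{\ell+1}=2g-1+e$ with $e\in H(P)\setminus\{0\}$, so that $\ell+1-g=m+1-2g=e$; then
\[
\nu_\ell=(m+1-g)-\bigl|\{a\in H\cap[0,m]:m-a\notin H\}\bigr|.
\]
By symmetry, $m-a\notin H$ iff $2g-1-(m-a)=a-e\in H$; hence the subtracted set is $\{a\in H:a-e\in H,\ 0\leq a\leq m\}$, which under $b=a-e$ becomes $\{b\in H\cap[0,2g-1]:b+e\in H\}$. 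Since $e\in H$, the semigroup property forces $b+e\in H$ for every $b\in H$, so this set has exactly $g$ elements (the nongaps in $[0,2g-1]$). Thus $\nu_\ell=(m+1-g)-g=e=\ell+1-g$, and the proof is complete. Note this computation also shows that when $e\notin H$ one gets $\nu_\ell>\ell+1-g$ (take $b=0$), which explains why the hypothesis $e\in H(P)$ is essential and why your monotonicity-on-a-subset detour cannot close the gap on its own.
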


\subsection{The automorphism group of an AG code $C(D,G)$}

In the following we use the same notation as in \cite{GK2008,JK2006}.
Let $\mathcal{M}_{N,q}\leq{\rm GL}(N,q)$ be the subgroup of matrices having exactly one non-zero element in each row and column.
For $\gamma\in{\rm Aut}(\fq)$ and $M=(m_{i,j})_{i,j}\in{\rm GL}(N,q)$, let $M^\gamma$ be the matrix $(\gamma(m_{i,j}))_{i,j}$.
Let $\mathcal{W}_{N,q}$ be the semidirect product $\mathcal M_{N,q}\rtimes{\rm Aut}(\fq)$ with multiplication $M_1\gamma_1\cdot M_2\gamma_2:= M_1M_2^\gamma\cdot\gamma_1\gamma_2$.
The \emph{automorphism group} ${\rm Aut}({C}(D,G))$ of ${C}(D,G)$ is the subgroup of $\mathcal{W}_{N,q}$ preserving ${C}(D,G)$, that is,
$$ M\gamma(x_1,\ldots,x_N):=((x_1,\ldots,x_N)\cdot M)^\gamma \in {C}(D,G) \;\;\textrm{for any}\;\; (x_1,\ldots,x_N)\in {C}(D,G). $$
Let ${\rm Aut}_{\fq}(\cX)$ denote the $\fq$-automorphism group of $\cX$. Also, let
$$ {\rm Aut}_{\fq,D,G}(\cX)=\{ \sigma\in{\rm Aut}_{\fq}(\cX)\,\mid\, \sigma(D)=D,\,\sigma(G)\approx_D G \}, $$
where $G'\approx_D G$ if and only if there exists $u\in\fq(\cX)$ such that $G'-G=(u)$ and $u(P_i)=1$ for $i=1,\ldots,N$, and
$$ {\rm Aut}_{\fq,D,G}^+(\cX):=\{ \sigma\in{\rm Aut}_{\fq}(\cX)\,\mid\, \sigma(D)=D,\,\sigma(|G|)=|G| \}, $$
where $|G|=\{G+(f)\mid f\in\overline{\mathbb F}_q(\cX)\}$ is the linear series associated with $G$.
Note that ${\rm Aut}_{\fq,D,G}(\cX)\subseteq {\rm Aut}_{\fq,D,G}^+(\cX)$. 

\begin{remark}\label{Coincidono}
Suppose that ${\rm supp}(D)\cup{\rm supp}(G)=\cX(\fq)$ and each point in ${\rm supp}(G)$ has the same weight in $G$. Then
$$ {\rm Aut}_{\fq,D,G}(\cX) = {\rm Aut}_{\fq,D,G}^+(\cX) = \{\sigma\in{\rm Aut}_{\fq}(\cX)\,\mid\,\sigma({\rm supp}(G))={\rm supp}(G) \}. $$
\end{remark}

In \cite{GK2008} the following result was proved.
\begin{theorem}{\rm(\!\!\cite[Theorem 3.4]{GK2008})}\label{Aut}
Suppose that the following conditions hold:
\begin{itemize}
\item $G$ is effective;
\item $\ell(G-P)=\ell(G)-1$ and $\ell(G-P-Q)=\ell(G)-2$ for any $P,Q\in\cX$;
\item $\cX$ has a plane model $\Pi(\cX)$ with coordinate functions $x,y\in\cL(G)$;
\item $\cX$ is defined over $\mathbb F_p$;
\item the support of $D$ is preserved by the Frobenius morphism $(x,y)\mapsto(x^p,y^p)$;
\item $N>\deg(G)\cdot\deg(\Pi(\cX))$.
\end{itemize}
Then
$$ {\rm Aut}({C}(D,G))\cong ({\rm Aut}_{\fq,D,G}^+(\cX)\rtimes{\rm Aut}(\fq))\rtimes \mathbb{F}_q^*. $$
\end{theorem}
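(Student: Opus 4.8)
\medskip
\noindent\textbf{Proof proposal.}
The plan is to prove the two inclusions separately: the containment $\supseteq$ is routine, while the containment $\subseteq$ is the substantial part. For $\supseteq$, one builds an explicit injective homomorphism from $(\mathrm{Aut}_{\fq,D,G}^{+}(\cX)\rtimes\mathrm{Aut}(\fq))\rtimes\fq^{*}$ to $\mathrm{Aut}(C(D,G))$. A scalar $\lambda\in\fq^{*}$ acts on $\fq^{N}$ as $\lambda\cdot\mathrm{id}$, which obviously preserves the linear code. A field automorphism $\gamma\in\mathrm{Aut}(\fq)$ acts coordinatewise; since $\cX$ is defined over $\F_{p}$ and $\mathrm{supp}(D)$ is stable under the $p$-power Frobenius, applying $\gamma$ to a codeword $(f(P_{1}),\dots,f(P_{N}))$ again lands in $C(D,G)$, equalling $(f^{\gamma}(P_{j_{1}}),\dots,f^{\gamma}(P_{j_{N}}))$ for a permutation determined by $\gamma$, with $f^{\gamma}\in\cL(G)$ because $G$ is $\fq$-rational. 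Finally, each $\sigma\in\mathrm{Aut}_{\fq,D,G}^{+}(\cX)$ permutes $\mathrm{supp}(D)=\{P_{1},\dots,P_{N}\}$ and pulls $\cL(G)$ back to $\cL(G')$ with $G'\in|G|$ effective and linearly equivalent to $G$; writing $G'-G=(u)$ (with $u(P_{i})\in\fq^{*}$, since $\mathrm{supp}(G)$ and $\mathrm{supp}(G')$ avoid $\mathrm{supp}(D)$), composition with $\sigma$ corrected by $u$ sends $C(D,G)$ to itself through a monomial matrix. One then checks the group law matches the stated iterated semidirect product (the scalars are normal in the whole group, $\mathrm{Aut}(\fq)$ normalizes the image of $\mathrm{Aut}_{\fq,D,G}^{+}(\cX)$ via its action on coordinates) and that the homomorphism is injective, because an element acting trivially on the codewords coming from $1,x,y$ — which separate the points $P_{i}$ by the embedding hypothesis — is already trivial.

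For $\subseteq$, take $M\gamma\in\mathrm{Aut}(C(D,G))\le\mathcal W_{N,q}$. Composing with the image of $\gamma^{-1}$ from the first part (again using $\F_{p}$-rationality and Frobenius-stability of $\mathrm{supp}(D)$) we may assume $\gamma=\mathrm{id}$, so $M\in\mathcal M_{N,q}$; write $M$ as the permutation matrix of some $\pi\in S_{N}$ times $\mathrm{diag}(\lambda_{1},\dots,\lambda_{N})$ with $\lambda_{i}\in\fq^{*}$. The conditions $\ell(G-P)=\ell(G)-1$ and $\ell(G-P-Q)=\ell(G)-2$ make $\eta$ an embedding whose composition with the coordinate functions $1,x,y\in\cL(G)$ recovers the plane model $\Pi(\cX)$; since the evaluation vectors of $1,x,y$ lie in $C(D,G)$ and $M$ preserves it, there are $a,b,c\in\cL(G)$ with $\lambda_{i}=a(P_{i})$, $\lambda_{i}\,x(P_{\pi(i)})=b(P_{i})$ and $\lambda_{i}\,y(P_{\pi(i)})=c(P_{i})$ for all $i$. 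Let $F(X,Y)$ be the affine equation of $\Pi(\cX)$ and $\tilde F(X_{0},X_{1},X_{2})$ its homogenization, of degree $d=\deg(\Pi(\cX))$; then $h:=\tilde F(a,b,c)\in\cL(dG)$ and, since $a(P_{i})^{d}F(x(P_{\pi(i)}),y(P_{\pi(i)}))=0$, $h$ vanishes at all $N$ points $P_{i}$. As $\deg(dG)=d\deg(G)<N$, necessarily $h\equiv0$, i.e.\ $F(b/a,c/a)\equiv0$. Putting $\tilde x:=b/a$ and $\tilde y:=c/a$ (well defined at the $P_{i}$ since $a(P_{i})=\lambda_{i}\ne0$), this says $(\tilde x,\tilde y)$ parametrizes a branch of $\Pi(\cX)$, hence defines a nonconstant morphism $\sigma\colon\cX\to\cX$ with $\sigma(P_{i})=P_{\pi(i)}$; a further use of the point count (or of the birationality of the model, which is implicit in the hypotheses) shows $\deg\sigma=1$, so $\sigma\in\mathrm{Aut}_{\fq}(\cX)$.

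It remains to locate $\sigma$ inside $\mathrm{Aut}_{\fq,D,G}^{+}(\cX)$ and to identify the residual monomial matrix. Since $\sigma$ permutes $\{P_{1},\dots,P_{N}\}=\mathrm{supp}(D)$ we get $\sigma(D)=D$; comparing $\sigma^{*}\cL(G)$ with the requirement that $M$ preserve $C(D,G)$ gives $\sigma(|G|)=|G|$, hence $\sigma\in\mathrm{Aut}_{\fq,D,G}^{+}(\cX)$. Factoring out the code automorphism induced by $\sigma$, the leftover is a diagonal matrix $\mathrm{diag}(\mu_{1},\dots,\mu_{N})$ in $\mathrm{Aut}(C(D,G))$; representing each of its powers by a function $g_{k}\in\cL(G)$ with $g_{k}(P_{i})=\mu_{i}^{k}$ and using $N>d\deg(G)$ to identify $g_{k}$ with $g_{1}^{k}$ identically forces $(g_{1})_{\infty}=0$, so by completeness of $\cX$ the function $g_{1}$ is a nonzero constant $\lambda\in\fq^{*}$ and $\mu_{i}\equiv\lambda$. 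Assembling the three contributions $\sigma,\gamma,\lambda$ and verifying that the resulting extension is the claimed iterated semidirect product completes the proof. The main obstacle is the passage from the finite incidence data at the $N$ points to the identity $F(\tilde x,\tilde y)\equiv0$ of rational functions: this is exactly where the hypothesis $N>\deg(G)\cdot\deg(\Pi(\cX))$ is consumed, and it requires the careful pole bound $h\in\cL(dG)$ through homogenization together with the argument that the induced endomorphism of $\cX$ has degree one rather than being a nontrivial cover.
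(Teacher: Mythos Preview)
The paper does not prove this theorem: it is quoted verbatim as \cite[Theorem 3.4]{GK2008} and used later as a black box in Section~\ref{Sec:Auto}, so there is no ``paper's own proof'' to compare against. Your sketch does follow the strategy of the Giulietti--Korchm\'aros argument (build the obvious automorphisms, then for $\subseteq$ pull back $1,x,y$ through a monomial $M$, homogenize to land in $\cL(dG)$, and use $N>\deg(G)\deg(\Pi(\cX))$ to force the algebraic relation, hence a curve endomorphism).

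Two places in your write-up would not pass as written. First, the claim that $\sigma$ has degree $1$ is asserted (``a further use of the point count \dots'') but not argued; the usual justification is either Riemann--Hurwitz when $g\ge2$, or an explicit count showing a degree-$d$ endomorphism cannot be injective on more than a bounded set, and you should say which you mean. Second, your scalar argument is off: from the diagonal automorphism you only get, for each $k$, some $g_k\in\cL(G)$ with $g_k(P_i)=\mu_i^{k}$, while $g_1^{k}$ lives in $\cL(kG)$, so ``identify $g_k$ with $g_1^{k}$'' does not make sense in $\cL(G)$. The standard fix is to compare $g_1^{2}$ with $g_2\cdot g_0$ (where $g_0=1$) inside $\cL(2G)$, use $N>2\deg(G)$ (which follows from $N>\deg(G)\deg(\Pi(\cX))$ since $\deg(\Pi(\cX))\ge2$), and conclude $g_1^{2}=g_2$, whence $(g_1)_\infty=0$ and $g_1$ is constant.
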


If any non-trivial element of ${\rm Aut}_{\fq}(\cX)$ fixes at most $N-1$ $\fq$-rational points of $\cX$  then ${\rm Aut}({C}(D,G))$ contains a subgroup isomorphic to $ ({\rm Aut}_{\fq,D,G}(\cX)\rtimes{\rm Aut}(\fq))\rtimes \mathbb{F}_q^*$; see \cite[Proposition 2.3]{BMZ}.

\subsection{The GGS curve}
Let $q$ be a prime power and consider an odd integer $n$. 
The GGS curve $GGS(q,n)$ is defined by the equations
\begin{equation}\label{GGS_equation}
\left\{
\begin{array}{l}
X^q + X = Y^{q+1}\\
Y^{q^2}-Y= Z^m\\
\end{array}
\right.
,
\end{equation}
where $m= (q^n+1)/(q+1)$; see \cite{GGS}.
The genus of $GGS(q,n)$ is $\frac{1}{2}(q-1)(q^{n+1}+q^n-q^2)$, and $GGS(q,n)$ is $\mathbb F_{q^{2n}}$-maximal.

Let $P_0=(0,0,0)$, $P_{(a,b,c)}=(a,b,c)$, and let $P_{\infty}$ be the unique ideal point of $GGS(q,n)$.
Note that $GGS(q,n)$ is singular, being $P_\infty$ its unique singular point. Yet, there is only one place of $GGS(q,n)$ centered at $P_\infty$; therefore, we can actually construct AG codes from $GGS(q,n)$ as described in Section \ref{Sec:Preliminaries_Curves} (see \cite[Appendix B]{Sti} and \cite[Chapter 8]{HKT} for an introduction to the concept of place of a curve).
The divisors of the functions $x,y,z$ satisfying $x^q + x = y^{q+1}$, $y^{q^2}-y= z^m$ are
$$
(x)=m(q+1)P_0-m(q+1)P_{\infty},
$$
$$
(y)=m\sum_{\alpha^q+\alpha=0} P_{(\alpha,0,0)}-mqP_{\infty},
$$
$$
(z)=\sum_{\scriptsize\begin{array}{l} \alpha^q+\alpha=\beta\\ \beta \in \mathbb{F}_{q^2}\\ \end{array}} P_{(\alpha,\beta,0)}-q^3P_{\infty}.
$$
Throughout the paper we indicate by $\overline D$ and $\tilde D$ the divisors 
\begin{equation}\label{Dbarra}
\overline D=\sum_{P\in GGS(q,n)(\mathbb{F}_{q^{2n}}),\ P\ne P_{\infty}}P,\qquad \tilde D=\sum_{P\in GGS(q,n)(\mathbb{F}_{q^{2n}}),\ P\ne P_{0}}P.
\end{equation}

\subsection{Structure of the paper}

The paper is organized as follows. In Section \ref{Sec:dord2} the value of $d_{ORD}(C_{\ell}(P_\infty))$ for $q=2$ and  $n\geq 5$ is obtained, where $C_\ell(P_\infty)=C^{\perp}(\overline{D},\ell P_{\infty})$; this is applied in Section \ref{Sec:Application1} to two families of codes with $q=2$ whose relative Singleton defect goes to zero as $n$ goes to infinity.
In Section \ref{Sec:P_0} we determine the Weierstrass semigroup at $P_0$, and hence at any $\mathbb F_{q^2}$-rational affine point of $GGS(q,n)$.
The tables in Section \ref{Sec:tabelle} describe the parameters of $C_\ell{P_\infty}$ and $C_\ell(P_0)$ in the particular case $q^{2n}=2^{10}$.
Sections \ref{Sec:Application2} and \ref{Sec:Application3} provide families of quantum codes and convolutional codes constructed from $C_\ell(P_\infty)$ and $C_\ell(P_0)$.
Finally, we compute in Section \ref{Sec:Auto} the automorphism group of the AG code $C(\overline{D},\ell P_{\infty})$ for $q^n+1 \leq \ell \leq q^{n+2}-q^3$.



\section{The computation of $d_{ORD}(C_{\ell}(P_\infty))$ for $q=2$}\label{Sec:d_ORD}\label{Sec:dord2}

In this section we deal with the codes $C_{\ell}(P_\infty)={C}^{\bot}(\overline D,\rho_{\ell}P_\infty)$, where $\overline D$ is as in \eqref{Dbarra}. Our purpose is to exhibit the exact value of $d_{ORD}(C_{\ell}(P_\infty))$ for the case $q=2$. First of all we determine the values of $\nu_\ell$ (Subsection \ref{Sec:d_ORD_sub1}); in Subsections  \ref{Sec:d_ORD_sub2}, \ref{Sec:d_ORD_sub3}, \ref{Sec:d_ORD_sub4} we compute $d_{ORD}(C_{\ell}(P_\infty))$.

\subsection{The Feng-Rao function $\nu_\ell$ for $q=2$}\label{Sec:d_ORD_sub1}

Assume that $q=2$ and $n \geq 5$ is odd. Let $m=\frac{2^n+1}{3}$. Then, from \cite[Corollary 3.5]{GOS},
$$H(P_\infty)= \bigg\{i(2^n+1)+2j \frac{2^n+1}{3}+8k \mid i \in \{0,1\}, \ j \in \{0,1,2,3\}, \ k \geq 0 \bigg\}.$$

\begin{remark} \label{scrittura} Let $\rho_\ell=i(2^n+1)+2j \frac{2^n+1}{3}+8k \in H(P_\infty)$. Then $\rho_\ell$ is uniquely determined by the triple $(i,j,k)$.
\end{remark}

\begin{proof}
Assume that $i(2^n+1)+2j \frac{2^n+1}{3}+8k=i'(2^n+1)+2j' \frac{2^n+1}{3}+8k'$. Then $i \equiv i' \pmod2$ and since $i,i' <2$ we have that $i=i'$. Thus, $2j\frac{2^n+1}{3}+8k=2j' \frac{2^n+1}{3}+8k'$. Since this implies that $j \equiv j' \pmod4$ and $j,j'<4$, we have that $j=j'$ and $k=k'$ and the claim follows.
\end{proof}

According to Remark \ref{scrittura}, the notation $(i,j,k)$ is used to indicate the non-gap at $P_\infty$ associated with the choices of the parameters $i,j,k$. In order to compute $d_{ORD}(C_{\ell}(P_\infty))$ the following definition is required. Let $\rho_\ell \in H(P_\infty)$ be fixed. Assume that $\rho_{\ell+1}=(i,j,k)$. Then,
$$\nu_\ell= \big| \{(i_r,j_r,k_r), \ r=1,2 \mid (i,j,k)=(i_1,j_1,k_1)+(i_2,j_2,k_2) \}\big|.$$

In the following lemmas we determine the value of $\nu_\ell$.

\begin{lemma} \label{i1} Let $\rho_\ell \in H(P_\infty)$ be fixed. Assume that $\rho_{\ell+1}=(1,j,k)$ for some $j=0,1,2,3$ and $k \geq 0$. Then,
$$\nu_\ell= \begin{cases} 2(j+1)(k+1), \ \ \textrm{if} \ \ k<m, \\ 2(j+1)(k+1)+2(3-j)(k-m+1), \  \ \textrm{otherwise}. \end{cases}$$
\end{lemma}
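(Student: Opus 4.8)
The plan is to count directly the decompositions $(1,j,k)=(i_1,j_1,k_1)+(i_2,j_2,k_2)$ with both summands in $H(P_\infty)$, exploiting the explicit shape of the semigroup and the uniqueness of the $(i,j,k)$-representation (Remark \ref{scrittura}). Since the first coordinate must satisfy $i_1+i_2=1$ as integers (because a non-gap $(i,j,k)$ has $i\in\{0,1\}$, and an equality $2j\frac{2^n+1}{3}+8k$ versus its counterpart cannot absorb a multiple of $2^n+1$ when $n\geq 5$, as already used in the proof of Remark \ref{scrittura}), exactly one of the two summands, say the first, has $i_1=1$ and the other has $i_2=0$. This is the key structural reduction: it turns the problem into counting pairs with $i_1=1$, $i_2=0$.

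Next I would analyze the $j$-coordinates. After removing the $i(2^n+1)$ part we need $2j_1\frac{2^n+1}{3}+8k_1 + 2j_2\frac{2^n+1}{3}+8k_2 = 2j\frac{2^n+1}{3}+8k$, with $j_1,j_2\in\{0,1,2,3\}$ and $k_1,k_2\geq 0$. Writing $M=\frac{2^n+1}{3}$, reducing modulo $8$ (note $\gcd(2M,8)=2$ and $M$ is odd) gives $j_1+j_2\equiv j\pmod 4$, so either $j_1+j_2=j$ or $j_1+j_2=j+4$; in the first case the leftover on the $k$-side is $8(k-k_1-k_2)=0$, i.e. $k_1+k_2=k$, and in the second case $2\cdot 4\cdot M = 8k'$ forces $k_1+k_2 = k - M$, which is only possible when $k\geq m$ (recall $m=M$). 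So the count splits as: the number of solutions with $j_1+j_2=j$, $k_1+k_2=k$ (the "$k<m$" part), plus the number of solutions with $j_1+j_2=j+4$, $k_1+k_2=k-m$ (the extra part, present only when $k\geq m$).

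For each of the two regimes I would count: the number of ordered pairs $(j_1,j_2)$ with $j_1,j_2\in\{0,1,2,3\}$ and $j_1+j_2=j$ is $j+1$ (for $0\leq j\leq 3$), while the number with $j_1+j_2=j+4$ is $4-j$ (i.e. $3-j+1$); independently, the number of ordered pairs $(k_1,k_2)$ of nonnegative integers with $k_1+k_2=k$ is $k+1$, and with $k_1+k_2=k-m$ it is $k-m+1$ (when $k\geq m$). Multiplying the independent counts and accounting for the factor $2$ coming from the two choices of which summand carries $i=1$ — equivalently, the swap $(i_1,j_1,k_1)\leftrightarrow(i_2,j_2,k_2)$, which here is never fixed since the first coordinates differ — gives $\nu_\ell = 2(j+1)(k+1)$ when $k<m$ and $\nu_\ell = 2(j+1)(k+1) + 2(3-j)(k-m+1)$ when $k\geq m$, as claimed.

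The main obstacle I anticipate is not the combinatorics of the two-variable count but making rigorous the claim that these are the \emph{only} two regimes — that is, ruling out solutions in which the $k$-components are so large that a further multiple of $2^n+1$ is created, or in which $j_1+j_2=j+4$ interacts with a nonzero change in the $i$-part. Because $2\cdot 3\cdot M = 2^n+1$ exactly, a combination like $j_1=j_2=3$ contributes $6M=2^n+1$, which could in principle be re-interpreted as raising $i$; one must check this does not produce new decompositions beyond those already enumerated, using $n\geq 5$ (so $8 < M$ and the "carry" structure is clean) and the uniqueness in Remark \ref{scrittura}. Once that case analysis is pinned down, the rest is the routine multiplication of binomial-type counts above.
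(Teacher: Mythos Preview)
Your approach is correct and essentially identical to the paper's: the paper also reduces modulo $2$ to force $i_1+i_2=1$, then (after cancelling the $3m$) reduces modulo $4$ to get $j_1+j_2\in\{j,j+4\}$ and the corresponding $k_1+k_2\in\{k,k-m\}$, and counts compositions---it merely treats each $j\in\{0,1,2,3\}$ separately rather than uniformly as you do. One slip to fix: the number of ordered pairs $(j_1,j_2)\in\{0,1,2,3\}^2$ with $j_1+j_2=j+4$ is $3-j$, not $4-j$ (your final formula is nonetheless stated correctly), and the ``carry'' obstacle in your last paragraph is a non-issue, since the parity argument already pins down $i_1+i_2=1$ exactly and the bound $j_1+j_2\le 6$ rules out any further overflow.
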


\begin{proof}
Let $i_1,i_2,j_1,j_2,k_1$, and $k_2 \in \mathbb{N}$ be such that
$$(2^n+1)+2jm+8k=(i_1+i_2)(2^n+1)+2(j_1+j_2)m+8(k_1+k_2)=3(i_1+i_2)m+2(j_1+j_2)m+8(k_1+k_2).$$
Then $i_1+i_2 \equiv 1 \pmod2$ and since $i_1+i_2 \leq 2$ we have that $i_1+i_2=1$. This implies that
$$3m+2jm+8k=3m+2(j_1+j_2)m+8(k_1+k_2),$$
and hence
\begin{equation}
\label{eq1}
jm+4k=(j_1+j_2)m+4(k_1+k_2).
\end{equation}
Assume that $j=0$. Then from (\ref{eq1}), $(j_1+j_2)m \equiv 0 \pmod 4$ and so, $j_1+j_2=4h$ for some integer $h$. Since $0 \leq j_1+j_2  \leq 6$ we have that $h=0$ or $h=1$. In the first case $k_1+k_2=k$, in the second case $k_1+k_2=k-m$. Since $k_1+k_2 \geq 0$, if $k < m$ the second case cannot occur. Thus, if $k < m$, since we have $2$ possible choices for $i_1$ and $(k+1)$ choices for $k_1$ (while $i_2$ and $k_2$ are determined according to the choices of $i_1$ and $k_1$, respectively), then $\nu_\ell=2(k+1)$.
Also, if $k \geq m$ we have that
$$\nu_\ell=2(k+1)+2\cdot|\{(j_1,j_2) : 0 \leq j_1,j_2 \leq 3, \ j_1+j_2=4\}|\cdot(k-m+1)=2(k+1)+6(k-m+1)$$
and the claim follows by direct checking. \\
Assume that $j=1$. Then from (\ref{eq1}), $(j_1+j_2)m \equiv m \pmod 4$ and so $j_1+j_2=1+4h$ for some integer $h$. Since $0 \leq j_1+j_2  \leq 6$ we have that $h=0$ or $h=1$. In the first case $k_1+k_2=k$, in the second case $k_1+k_2=k-m$. Since $k_1+k_2 \geq 0$ if $k < m$ the second case cannot occur. Thus, if $k < m$, since we have $2$ possible choices for $i_1$, $2$ possible choices for $j_1$ and $(k+1)$ choices for $k_1$, then $\nu_\ell=4(k+1)$.
Also, if $k \geq m$ we have that,
$$\nu_\ell=4(k+1)+2\cdot|\{(j_1,j_2) : 0 \leq j_1,j_2 \leq 3, \ j_1+j_2=5\}|\cdot(k-m+1)=4(k+1)+4(k-m+1),$$
and the claim follows by direct checking. \\
Assume that $j=2$. Then from (\ref{eq1}), $(j_1+j_2)m \equiv 2m \pmod4$ and so $j_1+j_2=2+4h$, for some integer $h$. Since $0 \leq j_1+j_2  \leq 6$ we have that $h=0$ or $h=1$. In the first case $k_1+k_2=k$, in the second case $k_1+k_2=k-m$. Since $k_1+k_2 \geq 0$, if $k < m$ the second case cannot occur. Thus, if $k < m$, since we have $2$ possible choices for $i_1$, $3$ possible choices for $j_1$ and $(k+1)$ choices for $k_1$, then $\nu_\ell=6(k+1)$.
Also, if $k \geq m$ we have that
$$\nu_\ell=6(k+1)+2(k-m+1)\cdot\big |\{(j_1,j_2) : 0 \leq j_1,j_2 \leq 3, \ j_1+j_2=6\}\big | =6(k+1)+2(k-m+1),$$
and the claim follows by direct checking. 

Assume that $j=3$. Then from (\ref{eq1}), $(j_1+j_2)m \equiv 3m \pmod 4$ and so $j_1+j_2=3+4h$, for some integer $h$. Since $0 \leq j_1+j_2  \leq 6$ we have that $h=0$. Since this implies that $k_1+k_2=k$, we have that $\nu_\ell=8(k+1)$.
\end{proof}

Using a similar approach we can prove the following.

\begin{lemma} \label{i0}
 Let $\rho_\ell \in H(P_\infty)$ be fixed. Assume that $\rho_{\ell+1}=(0,j,k)$ for some $j=0,1,2,3$ and $k \geq 0$. Then,
$$\nu_\ell= \begin{cases} (j+1)(k+1) + \lfloor \frac{j}{3} \rfloor (k+1), \ \ \textrm{if} \ \ k<m, \\ (j+1)(k+1) + \lfloor \frac{j}{3} \rfloor (k+1) + (5-2 \max\{0,j-2\})(k-m+1), \ \ \textrm{if} \ \ m \leq k <2m, \\ (j+1)(k+1) + \lfloor \frac{j}{3} \rfloor (k+1) + \\ + (5-2 \max\{0,j-2\})(k-m+1) + \max \{ 0,2-j\} (k-2m+1), \ \ \textrm{otherwise}. \end{cases}$$
\end{lemma}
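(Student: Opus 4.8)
The plan is to mimic directly the proof of Lemma \ref{i1}, adapting the bookkeeping to the case $i=0$. Starting from a decomposition
$$(0,j,k)=(i_1,j_1,k_1)+(i_2,j_2,k_2),$$
the first step is to note that $i_1+i_2\equiv 0\pmod 2$ together with $i_1+i_2\le 2$ forces either $i_1+i_2=0$ or $i_1+i_2=2$. The subcase $i_1=i_2=0$ is exactly the kind of computation already carried out, producing (for each admissible sum $j_1+j_2$) a contribution depending on how many times the quantity $2jm/3$ shifts by a multiple of $4$; the subcase $i_1=i_2=1$ reduces, after subtracting $2(2^n+1)=6m$ from both sides, to an identity $2jm+8k = 2(j_1+j_2)m+8(k_1+k_2)-6m$, i.e. a decomposition of $(0,j,k)$ after lowering the $i$-part, and this only contributes when $k$ is large enough for $k_1+k_2$ to stay nonnegative. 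That is precisely the source of the extra pieces that appear when $k\ge m$ and again when $k\ge 2m$.

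Concretely, I would organise the count by the residue of $j_1+j_2$ modulo $4$: from $jm+4k=(j_1+j_2)m+4(k_1+k_2)$ (the analogue of \eqref{eq1}, valid in the $i_1+i_2=0$ subcase) we get $j_1+j_2\in\{j,\,j+4\}$ if $j\le 2$ and $j_1+j_2\in\{j-4,\,j\}$ — but $j-4<0$ is impossible — if $j=3$; the second possibility $j_1+j_2=j+4$ is available only when $k\ge m$ (then $k_1+k_2=k-m$), and contributes $|\{(j_1,j_2):0\le j_i\le 3,\ j_1+j_2=j+4\}|$ times $(k-m+1)$, which is $5,3,1$ for $j=0,1,2$ respectively — matching the coefficient $5-2\max\{0,j-2\}$. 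For the $i_1=i_2=1$ subcase one similarly gets $j_1+j_2\in\{j,j+4,j+8\}\cap[0,6]$ after the shift, giving the term counted by $k-2m+1$ with multiplicity $\max\{0,2-j\}$ (nonzero only for $j=0,1$, where $j_1+j_2=j$ has one solution), and the $j+4$ option there re-uses the same $(k-m+1)$ window. The base term $(j+1)(k+1)+\lfloor j/3\rfloor(k+1)$ is just $|\{(j_1,j_2):j_1+j_2=j,\ 0\le j_i\le 3\}|\cdot(k+1)$ (there is one free choice of $k_1$ among $k+1$ values, and $i_1,i_2$ are forced to $0$ in this term), since the number of ordered pairs summing to $j\in\{0,1,2,3\}$ is $1,2,3,4$, which equals $(j+1)+\lfloor j/3\rfloor$.

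After assembling these pieces one has to be careful about overlaps between the $i_1+i_2=0$ and $i_1+i_2=2$ families: a decomposition with $i_1=i_2=0$ and one with $i_1=i_2=1$ are genuinely different ordered triples, so by Remark \ref{scrittura} (uniqueness of the $(i,j,k)$ presentation of a non-gap, hence of the summands) there is no double counting, and the three stated regimes $k<m$, $m\le k<2m$, $k\ge 2m$ correspond exactly to which of the shifted windows $k-m+1$, $k-2m+1$ are nonnegative. The main obstacle is not conceptual but combinatorial accuracy: keeping the four values of $j$ straight while tracking, for each, the admissible values of $j_1+j_2$ in the three subfamilies and matching the resulting multiplicities to the closed-form coefficients $\lfloor j/3\rfloor$, $5-2\max\{0,j-2\}$, and $\max\{0,2-j\}$. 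I would verify the final formula by direct checking against small cases (as the authors do after each subcase in Lemma \ref{i1}), e.g. $j=0$ and $j=3$, where the expression simplifies the most, before declaring the general case.
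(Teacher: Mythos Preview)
Your overall plan is exactly the paper's intended one (the paper just says ``using a similar approach''), but your execution of the $i_1=i_2=1$ subcase is off, and this propagates into several wrong multiplicities.

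Concretely: from $6m+2(j_1+j_2)m+8(k_1+k_2)=2jm+8k$ one gets $(j_1+j_2+3-j)m\equiv 0\pmod 4$, hence $j_1+j_2\equiv j+1\pmod 4$, \emph{not} $j_1+j_2\equiv j\pmod 4$ as you wrote. With the correct congruence the $i_1=i_2=1$ subcase yields, for each $j$, the pairs $(j_1+j_2,\,k_1+k_2)$ equal to $(1,k-m)$ and $(5,k-2m)$ when $j=0$; $(2,k-m)$ and $(6,k-2m)$ when $j=1$; $(3,k-m)$ when $j=2$; and $(0,k)$ and $(4,k-m)$ when $j=3$. Three of your stated counts are therefore wrong:
\begin{itemize}
\item The base coefficient $(j+1)+\lfloor j/3\rfloor$ is \emph{not} the number of ordered pairs with $j_1+j_2=j$ (that number is $j+1$); the extra $\lfloor j/3\rfloor(k+1)$ at $j=3$ comes from the $i_1=i_2=1$, $j_1+j_2=0$, $k_1+k_2=k$ contribution.
\item The coefficient $5-2\max\{0,j-2\}$ equals $5,5,5,3$ for $j=0,1,2,3$ (not $5,3,1$), and it is the \emph{sum} of the $(k-m+1)$-contributions from both subcases: e.g.\ for $j=0$ one gets $3$ from $(i_1+i_2,j_1+j_2)=(0,4)$ and $2$ from $(2,1)$.
\item The coefficient $\max\{0,2-j\}$ equals $2,1,0,0$; for $j=0$ it comes from $j_1+j_2=5$ (two ordered pairs), for $j=1$ from $j_1+j_2=6$ (one pair) --- in neither case from ``$j_1+j_2=j$''.
\end{itemize}
Once you fix the congruence in the $i_1=i_2=1$ branch and recount, everything matches the stated formula; your suggested sanity checks at $j=0$ and $j=3$ would have caught the discrepancy.
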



\subsection{Computation of $d_{ORD}(C_{\ell}(P_\infty))$ for $ \rho_{\ell+1}=(1,j,k)$ and $\rho_{\ell} \leq 2g$}\label{Sec:d_ORD_sub2}

Let $\rho_\ell \in H(P_\infty)$. Assume that $\rho_{\ell+1}=(1,j,k)$ for some $j=0,1,2,3$ and $k \geq 0$.
Recall that  $C_\ell(P_\infty)$ is the dual code of the AG code $C(\overline {D},\rho_{\ell}P_{\infty})$, where $\overline D$ is as in \eqref{Dbarra}.

\begin{lemma} \label{dord1}
If $\rho_{\ell+1}=(1,0,k)$ for some $k<m$ then
$$d_{ORD}(C_\ell(P_\infty))=\begin{cases} 2, \ \ \textrm{if} \ \ k=0, \\ 3, \ \ \textrm{if} \ \ k \leq \lfloor \frac{m}{8} \rfloor, \\ 4, \ \ \textrm{if} \ \ \frac{m}{8} < k \leq \lfloor \frac{m}{4} \rfloor, \\ 5, \ \ \textrm{if} \ \ \frac{m}{4} < k \leq \lfloor \frac{3m}{8} \rfloor, \\ 6, \ \ \textrm{if} \ \ \frac{3m}{8} < k \leq \lfloor \frac{m}{2} \rfloor, \\ 8, \ \ \textrm{if} \ \ \frac{m}{2} < k \leq \lfloor \frac{3m}{4} \rfloor, \\ 8 \big(\lceil k - \frac{3m}{4}\rceil +1 \big), \ \ \textrm{if} \ \ \frac{3m}{4} \leq k \leq m-2, \\ \nu_\ell=2m, \ \ \textrm{if} \ \ k=m-1. \end{cases}$$
\end{lemma}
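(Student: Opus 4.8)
The statement gives the order bound $d_{ORD}(C_\ell(P_\infty)) = \min\{\nu_m : m \geq \ell\}$ in the regime $\rho_{\ell+1} = (1,0,k)$ with $k < m$. The strategy is to use Lemma~\ref{i1} and Lemma~\ref{i0} to write down $\nu_{m}$ for all $m \geq \ell$ and then minimize. Concretely, I would first parametrize all non-gaps $\rho_{s+1}$ with $s \geq \ell$ by their triples $(i',j',k')$ and observe that, since $\rho_{\ell+1}=(1,0,k)=2^n+1+8k$ with $k<m$, the non-gaps that follow it are, in increasing order, exactly those $(i',j',k')$ with value at least $2^n+1+8k$. Because $2^n+1 = 3m$ and $k<m$, one sorts these out by size: the relevant candidates are $(1,0,k'),(1,1,k''),(1,2,k'''),\dots$ and also $(0,j',k')$-type non-gaps with $k'$ large, plus $(1,3,k')$-type ones. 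The key reduction is that for $s$ slightly larger than $\ell$ the minimizing $\nu$ comes from a $(1,0,\cdot)$ or $(0,\cdot,\cdot)$ triple with small $j$-parameter, because by Lemma~\ref{i1} the value $\nu$ for a $(1,j,k')$ non-gap is $2(j+1)(k'+1)$ (for $k'<m$), which is smallest when $j=0$.

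Second, I would carry out the minimization explicitly. For the $(1,0,\cdot)$ chain: the non-gaps $(1,0,k),(1,0,k+1),\dots$ have $\nu$-values $2(k+1),2(k+2),\dots$ — increasing, so within this chain the minimum over $m\geq\ell$ is $2(k+1)$, attained at $m=\ell$. But interleaved among the $(1,0,\cdot)$ non-gaps (which are the integers $2^n+1+8k'$) are the $(0,j',k')$ non-gaps and the $(1,j',k')$ non-gaps with $j'\geq 1$; these are denser near $2^n+1$ and can contribute smaller $\nu$-values. So the real content is: among \emph{all} non-gaps $\geq \rho_{\ell+1}$, which has the smallest Feng--Rao number, and how does that smallest value depend on $k$? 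Using the formulas, a $(0,0,k')$ non-gap (value $8k'$, but these are all $<2^n+1$ when $k'<3m/8$, hence not $\geq\rho_{\ell+1}$ unless $k'$ is large), a $(0,1,k')$ non-gap (value $2m+8k'$), a $(0,2,k')$ non-gap (value $4m+8k'$), etc. One checks which of these exceed $2^n+1+8k = 3m+8k$ and computes their $\nu$ via Lemma~\ref{i0}: e.g. $\nu$ for $(0,1,k')$ is $2(k'+1)$ when $k'<m$. Comparing $2m+8k' \geq 3m+8k$, i.e. $k' \geq m/8 + k$, the smallest such $k'$ gives $\nu \approx 2(k+m/8+1)$, which for small $k$ is \emph{larger} than $2(k+1)$; so for $k=0$ the answer is $2$, coming from $\nu_\ell = 2(0+1)=2$. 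As $k$ grows past $m/8$, the $(0,1,\cdot)$ and later the $(1,1,\cdot)$, $(0,2,\cdot)$, $(1,2,\cdot)$, $(0,3,\cdot)$, $(1,3,\cdot)$ chains start contributing smaller values, and the thresholds $\lfloor m/8\rfloor, \lfloor m/4\rfloor, \lfloor 3m/8\rfloor,\lfloor m/2\rfloor,\lfloor 3m/4\rfloor$ in the statement are exactly where the minimizing chain switches. The last two branches ($8(\lceil k-3m/4\rceil+1)$ for $3m/4\leq k\leq m-2$, and $\nu_\ell=2m$ for $k=m-1$) come from the region where $k$ is large enough that the $j=3$ terms and eventually the $k\geq m$ corrections in Lemma~\ref{i1} kick in; in particular at $k=m-1$ the non-gap $\rho_{\ell+1}=(1,0,m-1)$ has $\nu_\ell = 2\cdot 1\cdot m = 2m$ by the $k<m$ branch of Lemma~\ref{i1} with $j=0$, and one must verify no later non-gap beats it.

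Third, I would organize the case analysis by the value of $k$ relative to the six breakpoints, and in each range: (a) identify the finite list of non-gaps $\rho_{m+1}$ with $m \geq \ell$ whose $\nu$ could be minimal — these are the first few non-gaps after $\rho_{\ell+1}$ plus the "first representative" of each $(i',j')$-chain that has entered the relevant size window; (b) compute each candidate $\nu$ from Lemmas~\ref{i1} and~\ref{i0}; (c) take the minimum and check it matches the claimed value; (d) argue that all non-gaps further out have strictly larger $\nu$ (monotonicity within each chain plus a crude lower bound like $\nu_\ell \geq \ell + 1 - g$ from \cite[Theorem 5.24]{HLP} to truncate the tail). Step (d) — proving that the tail never produces something smaller — is the main obstacle, since one cannot check infinitely many non-gaps by hand; here I would lean on the fact that within each arithmetic-progression chain $\nu$ is (weakly) increasing in $k'$, so only the \emph{first} non-gap of each chain lying above $\rho_{\ell+1}$ matters, reducing the tail to a finite (and small) check. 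A secondary subtlety is handling the boundary cases of the floors (e.g. $k = \lfloor m/8\rfloor$ exactly versus $k = \lfloor m/8\rfloor + 1$), which requires care since $m = (2^n+1)/3$ is not divisible by $8$; I would treat these by writing $m = 8t+r$ and tracking $r$, but since the statement uses floors and strict inequalities consistently, the bookkeeping should close. I expect no conceptual difficulty beyond this finite-but-tedious verification.
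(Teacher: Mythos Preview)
Your approach is essentially the paper's: parametrize $\rho_{s+1}=(a,b,c)$, use Lemmas~\ref{i1}--\ref{i0} for $\nu_s$, exploit that $\nu_s$ is increasing in $c$ within each $(a,b)$-chain so only the smallest $c$ with $\rho_{s+1}\ge\rho_{\ell+1}$ matters, and minimize over the eight chains (the paper additionally rules out the $c\ge m$ and $c\ge 2m$ regimes in its Cases~2, 4, 5, which your monotonicity argument also handles). One correction to your sketch: the chain producing $\nu=3$ for $1\le k\le\lfloor m/8\rfloor$ is $(0,2,\cdot)$, not $(0,1,\cdot)$ --- indeed $(0,1,\cdot)$ is never the minimizer here, and the correct succession of minimizing chains as $k$ increases is $(1,0,\cdot),(0,2,\cdot),(1,1,\cdot),(0,3,\cdot),(1,2,\cdot),(1,3,\cdot)$, alternating odd values from the $a=0$ side with even values from the $a=1$ side.
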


\begin{proof}
For $\rho_s \in H(P_\infty)$ the following system of inequalities is considered:
\begin{equation}
\label{sist}
\begin{cases} \rho_{s+1} \geq \rho_{\ell+1}, \\ \nu_s \leq \nu_\ell. \end{cases}
\end{equation}
In order to compute $d_{ORD}(C_\ell(P_\infty))$ we take the minimum value of $\nu_s$ such that System \eqref{sist} is satisfied. Also, a case-by-case analysis with respect to $a\in\{0,1\}$ is required. Assume that $\rho_{s+1}=(a,b,c)$ for some $a\in\{0,1\}$, $b\in\{0,1,2,3\}$ and $c \geq 0$. From Lemma \ref{i1}, System \eqref{sist} reads,
\begin{equation}
\label{sist1}
\begin{cases} 3am+2bm+8c \geq 3m+8k, \\ \nu_s \leq 2(k+1). \end{cases}
\end{equation}
{\bf Case 1: $a=1$ and $c<m$.} From Lemma \ref{i1}, System \eqref{sist1} reads
$$\begin{cases} 2bm+8c \geq 8k, \\ 2(b+1)(c+1) \leq 2(k+1). \end{cases}$$
\begin{itemize}
\item If $b=0$ then $c=k$ and so the unique solution is $\nu_\ell$ itself. 
\item If $b=1$ then $c \geq \lceil k-\frac{m}{4} \rceil$ and $c \leq \lfloor \frac{k-1}{2} \rfloor$. Such a $c$ exists if and only if $ \lceil k-\frac{m}{4} \rceil \leq \lfloor \frac{k-1}{2} \rfloor$.
Assume that $k$ is odd. Then $k-\lfloor \frac{m}{4} \rfloor =\lceil k-\frac{m}{4} \rceil \leq \lfloor \frac{k-1}{2} \rfloor =\frac{k-1}{2}$ if and only if $k \leq 2\lfloor \frac{m}{4} \rfloor -1$.
Similarly if $k$ is even then $c$ exists if and only if $ k-\lfloor \frac{m}{4} \rfloor \leq \frac{k-2}{2}$, that is $k \leq 2\lfloor \frac{m}{4} \rfloor -2$. For these cases the minimum is obtained taking $c=\max\{0,\lceil k-\frac{m}{4} \rceil$\} and hence $\nu_s=4(\max\{0, \lceil k-\frac{m}{4} \rceil \} +1)$.

\item If $b=2$ then $c \geq \lceil k-\frac{m}{2} \rceil$ and $c \leq \lfloor \frac{k-2}{3} \rfloor$. As before, such a $c$ exists if and only if $\lceil k-\frac{m}{2} \rceil \leq \lfloor \frac{k-2}{3} \rfloor$. 
This is equivalent to $k \leq \frac{3}{2}(\lfloor \frac{m}{2} \rfloor -1)$ if $k \equiv 0 \pmod3$, to $k \leq \frac{3}{2}\lfloor \frac{m}{2} \rfloor -2$ if $k \equiv 1 \pmod3$, to $k \leq \frac{3}{2}\lfloor \frac{m}{2} \rfloor -1$ if $k \equiv 2 \pmod3$.
For these cases the minimum is obtained taking $c=\max\{0, \lceil k-\frac{m}{2} \rceil\}$ and hence $\nu_s=6(\max\{0, \lceil k-\frac{m}{2} \rceil\} +1)$.

\item If $b=3$ then $c \geq \lceil k-\frac{3m}{4} \rceil$ and $c \leq \lfloor \frac{k-3}{4} \rfloor$.
As before, such a $c$ exists if and only if $\lceil k-\frac{3m}{4} \rceil \leq \lfloor \frac{k-3}{4} \rfloor$. 
By direct checking, this is equivalent to $k\leq m-2$.
Here the minimum is obtained taking $c=\max\{0, \lceil k-\frac{3m}{4} \rceil\}$ and hence $\nu_s=8(\max\{0, \lceil k-\frac{3m}{4} \rceil\} +1)$.
\end{itemize}
When $k > \frac{3m}{4}$ and $k \leq m-2$ the minimum value above is obtained as $\nu_s=8(\lceil k-\frac{3m}{4} \rceil +1)$. We observe that if $k=m-1$ then $\nu_\ell=2(k+1)=2m$ and $8(\max\{0, \lceil k-\frac{3m}{4} \rceil\} +1)=8(\lceil m-1-\frac{3m}{4} \rceil\} +1)>2m$. This implies that if $k=m-1$ then the minimum value is $\nu_\ell=2m$ itself.
Thus, combining the previous results we obtain
\begin{equation}
\label{min1}
\min\{\nu_s \mid a=1 \ \ \textrm{and} \ \ c<m\}=\begin{cases} 2, \ \ \textrm{if} \ \ k=0, \\ 4 \ \ \textrm{if} \ \ 1 \leq k \leq \lfloor \frac{m}{4} \rfloor, \\ 6, \ \ \textrm{if} \ \ \frac{m}{4} < k \leq \lfloor \frac{m}{2} \rfloor, \\ 8, \ \ \textrm{if} \ \ \frac{m}{2} < k \leq \lfloor \frac{3m}{4} \rfloor, \\ 8(\lceil k-\frac{3m}{4} \rceil +1), \ \ \textrm{if} \ \  \frac{3m}{4} <k \leq m-2, \\ 2m=\nu_\ell, \ \ \textrm{if} \ \ k=m-1. \end{cases}
\end{equation}

{\bf Case 2: $a=1$ and $c\geq m$.} From Lemma \ref{i1} System \eqref{sist1} reads,
$$\begin{cases} 2bm+8c \geq 8k, \\ 2(b+1)(c+1) +2(3-b)(c-m+1) \leq 2(k+1). \end{cases}$$
Since $2(b+1)(c+1) +2(3-b)(c-m+1) \geq 2(c+1)$ and $c>k$ this case cannot occur.
\ \\

{\bf Case 3: $a=0$ and $c<m$.} From Lemma \ref{i0} \eqref{sist1} reads,
$$\begin{cases} 2bm+8c \geq 3m+8k, \\ (b+1)(c+1) + \lfloor \frac{b}{3} \rfloor (c+1) \leq 2(k+1). \end{cases}$$
\begin{itemize}
\item If $b=0$  then $c \geq \lceil k+\frac{3m}{8} \rceil$ and $c \leq 2k+1$.
Such a $c$ exists if and only if
$k \geq\lfloor \frac{3m}{8} \rfloor$.
For these cases, the minimum is obtained taking $c= \lceil k+\frac{3m}{8} \rceil$ and hence $\nu_s=(\lceil k+\frac{3m}{8} \rceil\ +1)$.
\item The case $b=1$ cannot occur. In fact we have $c \geq \lceil k+\frac{m}{8} \rceil$ and $2(c+1) \leq 2(k+1)$, a contradiction.

\item If $b=2$ then $c \geq \lceil k-\frac{m}{8} \rceil$ and $c \leq \lfloor \frac{2k-1}{3} \rfloor$.
Such a $c$ exists if and only if $k+\lceil -\frac{m}{8} \rceil \leq \lfloor \frac{2k+1}{3} \rfloor$.
This is equivalent to $k \leq 3 \lfloor \frac{m}{8} \rfloor -1$ if $2k\equiv1\pmod3$, to $k \leq 3 \lfloor \frac{m}{8} \rfloor +1$ if $2k\equiv2\pmod3$, to $k \leq 3 \lfloor \frac{m}{8} \rfloor -3$ if $2k\equiv0\pmod3$.
For these cases, the minimum is obtained taking $c=\max\{0, \lceil k-\frac{m}{8} \rceil\}$ and hence $\nu_s=3(\max\{0, \lceil k-\frac{m}{8} \rceil\} +1)$.

\item If $b=3$ then $c \geq \lceil k-\frac{3m}{8} \rceil$ and $c \leq \lfloor \frac{2k-3}{5} \rfloor$.
Such a $c$ exists if and only if $k+\lceil -\frac{3m}{8} \rceil \leq \lfloor \frac{2k-3}{5} \rfloor$.
This is equivalent to $k \leq \frac{5}{3} \lfloor \frac{3m}{8} \rfloor -\frac{5}{3}$ if $2k\equiv0\pmod5$, to $k \leq \frac{5}{3} \lfloor \frac{3m}{8} \rfloor -2$ if $2k\equiv1\pmod5$, to $k \leq \frac{5}{3} \lfloor \frac{3m}{8} \rfloor -\frac{7}{3}$ if $2k\equiv2\pmod5$, to $k \leq \frac{5}{3} \lfloor \frac{3m}{8} \rfloor -1$ if $2k\equiv3\pmod5$, to $k \leq \frac{5}{3} \lfloor \frac{3m}{8} \rfloor -\frac{4}{3}$ if $2k\equiv4\pmod5$.
In these cases, the minimum is obtained taking $c=\max\{0, \lceil k-\frac{3m}{8} \rceil\}$ and hence $\nu_s=5(\max\{0, \lceil k-\frac{3m}{8} \rceil\} +1)$.
\end{itemize}
Thus, we obtain
\begin{equation}
\label{min3}
\min\{\nu_s \mid a=0 \ \ \textrm{and} \ \ c<m\}= \begin{cases} 3, \ \ \textrm{if} \ \ k \leq \lfloor \frac{m}{8} \rfloor, \\ 5, \ \ \textrm{if} \ \ \frac{m}{8} < k \leq \lfloor \frac{3m}{8} \rfloor, \\ 5(\lceil k-\frac{3m}{8} \rceil +1), \ \textrm{if} \ \ \lceil \frac{3m}{8} \rceil \leq k \leq \lfloor \frac{5}{3} \lfloor \frac{3m}{8} \rfloor-\frac{7}{3} \rfloor, \\ (\lceil k+\frac{3m}{8} \rceil\ +1), \ \textrm{otherwise}.\end{cases}
\end{equation}

{\bf Case 4: $ a=0$ and $m \leq c < 2m$.} From Lemma \ref{i0} System \eqref{sist1} reads,
$$\begin{cases} 2bm+8c \geq 3m+8k, \\ (b+1)(c+1) + \lfloor \frac{b}{3} \rfloor (c+1) +(5-2\max\{0,b-2\})(c-m+1) \leq 2(k+1). \end{cases}$$
Since $(b+1)(c+1) + \lfloor \frac{b}{3} \rfloor (c+1) +(5-2\max\{0,b-2\})(c-m+1) \geq (b+1)(c+1)$ and $c>k$, cases $b=1,2,3$ cannot occur. Thus $b=0$ and
$$\begin{cases} 8c \geq 3m+8k, \\ (c+1) +5(c-m+1) \leq 2(k+1). \end{cases}$$
Hence $c \geq \lceil k+\frac{3m}{8}\rceil$ and $c \leq \lfloor \frac{2k+5m-4}{6} \rfloor$. Since $c \geq m$ then $k \geq \frac{m+4}{2}$. The minimum value is obtained (when it is possible) for $c = \lceil k+\frac{3m}{8}\rceil$. By direct checking the minimum $\nu_\ell$ is bigger than the one obtained in \eqref{min1}, and hence we can discard this case.
\ \\

{\bf Case 5: $a=0$ and $c\geq 2m$.} From Lemma \ref{i0} System \eqref{sist1} reads,
$$\begin{cases} 2bm+8c \geq 3m+8k, \\ (b+1)(c+1) + \lfloor \frac{b}{3} \rfloor (c+1) +(5-2\max\{0,b-2\})(c-m+1)+\max\{0,2-b\}(c-2m+1) \leq 2(k+1). \end{cases}$$
Since $(b+1)(c+1) + \lfloor \frac{b}{3} \rfloor (c+1) +(5-2\max\{0,b-2\})(c-m+1)+\max\{0,2-b\}(c-2m+1) \geq (b+1)(c+1) \geq 2m+1$ and $2(k+1) \leq 2m$ this case cannot occur.

Taking the minimum of the values in \eqref{min1} and \eqref{min3} the claim follows.
\end{proof}


Using the same arguments the following  results are obtained.

\begin{lemma} \label{dord3}
If $\rho_{\ell+1}=(1,3,k)$ for some $k<m$ then
$$d_{ORD}(C_\ell(P_\infty))=\nu_\ell.$$
\end{lemma}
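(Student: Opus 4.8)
The plan is to run exactly the argument from the proof of Lemma \ref{dord1}, specialised to $\rho_{\ell+1}=(1,3,k)$ with $k<m$. By Lemma \ref{i1} (the branch $j=3$, $k<m$) we have $\nu_\ell=8(k+1)$, so the claim is that $8(k+1)$ is the least among the numbers $\nu_s$ with $s\geq\ell$. Since $s\geq\ell$ is equivalent to $\rho_{s+1}\geq\rho_{\ell+1}$ and the value $\nu_\ell$ is attained at $s=\ell$, it suffices to prove
\[
\nu_s\geq 8(k+1)\qquad\text{for every }s\text{ with }\rho_{s+1}\geq\rho_{\ell+1};
\]
equivalently, System \eqref{sist} from the proof of Lemma \ref{dord1} admits no solution with $\nu_s<\nu_\ell$. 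Writing $\rho_{s+1}=(a,b,c)$ with $a\in\{0,1\}$, $b\in\{0,1,2,3\}$, $c\geq 0$, the condition $\rho_{s+1}\geq\rho_{\ell+1}$ reads $3am+2bm+8c\geq 9m+8k$, i.e. $8c\geq(9-3a-2b)m+8k$.

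I would then perform the same case split as in Lemma \ref{dord1}: first on $a\in\{0,1\}$, then on the range of $c$ that selects the relevant branch of the $\nu_s$-formula ($c<m$ and $c\geq m$ when $a=1$, via Lemma \ref{i1}; $c<m$, $m\leq c<2m$ and $c\geq 2m$ when $a=0$, via Lemma \ref{i0}). In each branch one substitutes the expression for $\nu_s$ and bounds it below using the displayed lower bound on $c$. Two mechanisms do all the work. In the branches where the formula collects all of its ``overflow'' terms (namely $c\geq m$ when $a=1$, and $c\geq 2m$ when $a=0$), $\nu_s$ has exactly the shape $8c-\lambda m+8$ for the same constant $\lambda$ that appears in $8c\geq\lambda m+8k$, so substitution yields $\nu_s\geq 8(k+1)$ with the $\lambda m$ terms cancelling; in the intermediate branch $m\leq c<2m$ (with $a=0$) the leading coefficient of $c$ is at least $6$ and one reaches the same conclusion. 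In the non-overflow branches $\nu_s$ equals a single product $\mu(c+1)$, with $\mu=2(b+1)$ for $a=1$ and $\mu=(b+1)+\lfloor b/3\rfloor$ for $a=0$; here one uses that an admissible $c$ must genuinely lie in the prescribed interval, which forces $k\leq\frac{(3a+2b-1)m}{8}-1$ (for instance $a=0$, $b=3$, $c<m$ requires $\tfrac{3m}{8}+k\leq m-1$, hence $k\leq\tfrac{5m}{8}-1$), and this is precisely (or, for one or two branches, slightly stronger than) the inequality needed to make $\mu(c+1)\geq 8(k+1)$. Two branches are special: $a=1$, $b=3$, $c<m$, where $(1,3,c)\geq(1,3,k)$ gives $c\geq k$ directly and so $\nu_s=8(c+1)\geq 8(k+1)$; and $a=0$, $b=0$, $c<m$, which is vacuous since the constraint already forces $c>m$. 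Collecting the estimates yields $\nu_s\geq\nu_\ell$ throughout, whence $d_{ORD}(C_\ell(P_\infty))=\nu_\ell$.

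The principal difficulty is bookkeeping rather than depth. The delicate point is the non-overflow subcases with $a=0$ and $b\in\{2,3\}$: a crude estimate using only $k<m$ fails there for the last few admissible values of $k$, and only invoking the \emph{existence} of an integer $c$ in the correct interval — which confines $k$ to a range such as $[0,\tfrac{m}{8}-1]$, $[0,\tfrac{3m}{8}-1]$ or $[0,\tfrac{5m}{8}-1]$ — closes the estimate. Accordingly, the residue-class splittings of $k$ modulo $3$, $4$ and $5$ that already appeared in the proof of Lemma \ref{dord1} recur in exactly these subcases; each of them reduces to an elementary integer inequality of the type verified there, and carrying them all out carefully is the most laborious, though entirely routine, part of the write-up.
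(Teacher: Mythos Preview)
Your approach is exactly what the paper does: it states Lemma \ref{dord3} immediately after Lemma \ref{dord1} with the remark ``Using the same arguments the following results are obtained,'' i.e.\ the identical case split on $(a,b,c)$ using Lemmas \ref{i1} and \ref{i0}. One small caveat on your sketch: in the intermediate branch $a=0$, $m\le c<2m$, the slogan ``leading coefficient of $c$ at least $6$'' is not by itself enough when $b=0$ (it gives $\nu_s\ge 7m/4+6k+6$, which falls short of $8(k+1)$ for $k$ near $m$); you must also invoke $c\le 2m-1$, which forces $k\le 7m/8-1$ and closes that subcase --- the same interval trick you already flag for the non-overflow branches.
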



\begin{lemma} \label{dord2}
If $\rho_{\ell+1}=(1,1,k)$ for some $k<m$ then
$$d_{ORD}(C_\ell(P_\infty))=\begin{cases} 4, \ \ \textrm{if} \ \ k=0, \\ 5, \ \ \textrm{if} \ \ k \leq \lfloor \frac{m}{8} \rfloor, \\ 6 \ \ \textrm{if} \ \ \frac{m}{8} < k \leq \lfloor \frac{m}{4} \rfloor, \\ 8, \ \ \textrm{if} \ \ \frac{m}{4} < k \leq \lfloor \frac{m}{2} \rfloor, \\ 8(\lceil k -\frac{m}{2} \rceil+1), \ \ \textrm{if} \ \ \lceil \frac{m}{2} \rceil \leq k\leq \lfloor \frac{3m}{4} \rfloor-2 ,  
\\ 2(\lceil \frac{m}{4} +k \rceil+1)+6(\lceil \frac{m}{4} +k \rceil-m+1), \ \ \textrm{if} \ \ \lfloor \frac{3m}{4} \rfloor-1 \leq k \leq m-2, \\ 4m, \ \ \textrm{if} \ \ k=m-1. \end{cases}$$
\end{lemma}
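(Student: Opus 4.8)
The plan is to repeat, with the present data, the argument used for Lemma~\ref{dord1}. Since $\rho_{\ell+1}=(1,1,k)$ with $k<m$, Lemma~\ref{i1} gives $\nu_\ell=4(k+1)$, and by definition $d_{ORD}(C_\ell(P_\infty))$ is the least $\nu_s$ for which $\rho_{s+1}\geq\rho_{\ell+1}$; writing $\rho_{s+1}=(a,b,c)=3am+2bm+8c$, this is the least $\nu_s$ for which System~\eqref{sist} --- here $3am+2bm+8c\geq 5m+8k$ together with $\nu_s\leq 4(k+1)$ --- admits a solution. First I would split into the same five cases as in Lemma~\ref{dord1}: $a=1,c<m$; $a=1,c\geq m$; $a=0,c<m$; $a=0,m\leq c<2m$; $a=0,c\geq 2m$, and inside each into the four values $b\in\{0,1,2,3\}$. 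In every sub-case one uses Lemma~\ref{i1} or Lemma~\ref{i0} to make $\nu_s$ explicit, turns the two inequalities into a lower bound $c\geq c_{\min}(b,k)$ and an upper bound $c\leq c_{\max}(b,k)$, decides for which $k$ one has $c_{\min}(b,k)\leq c_{\max}(b,k)$, and then (when this holds) reads off the minimal admissible $\nu_s$, always attained at $c=\max\{0,c_{\min}(b,k)\}$. The fact that $m\equiv 3\pmod 4$, so that none of $\tfrac{m}{2},\tfrac{m}{4},\tfrac{m}{8},\tfrac{3m}{4},\tfrac{3m}{8}$ is an integer, keeps the ceilings under control.

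Next I would isolate the sub-cases that can actually realise the minimum. For $a=1,\ c<m$: $b=1$ forces $c=k$ and returns $\nu_\ell$ itself; $b=2$ gives $6(\max\{0,\lceil k-\tfrac{m}{4}\rceil\}+1)$; $b=3$ gives $8(\max\{0,\lceil k-\tfrac{m}{2}\rceil\}+1)$, feasible exactly for $k\leq m-2$; and $b=0$ gives $2(\lceil k+\tfrac{m}{4}\rceil+1)$ provided $\lceil k+\tfrac{m}{4}\rceil\leq m-1$. For $a=1,\ c\geq m$ the bound $\nu_s\leq 4(k+1)$ already kills $b\geq 1$ (there $\nu_s\geq 4(c+1)\geq 4(m+1)>4m\geq 4(k+1)$), while $b=0$ gives $2(\lceil k+\tfrac{m}{4}\rceil+1)+6(\lceil k+\tfrac{m}{4}\rceil-m+1)$; the two $b=0$ contributions together amount to $2(\lceil k+\tfrac{m}{4}\rceil+1)+6\max\{0,\lceil k+\tfrac{m}{4}\rceil-m+1\}$, which is the expression in the penultimate line of the statement. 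For $a=0,\ c<m$: $b=3$ gives $5(\max\{0,\lceil k-\tfrac{m}{8}\rceil\}+1)$, while $b=0,1,2$ give strictly larger values or are infeasible for the $k$ that matter. Finally, for $a=0$ with $c\geq m$, and for $a=1,\ c\geq m$ with $b\geq 1$, a short estimate of $c_{\min}$ against $c_{\max}$ shows that every admissible $\nu_s$ exceeds the minimum already produced, so these can be discarded just as Cases~2, 4 and 5 in the proof of Lemma~\ref{dord1}.

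It then remains to take the minimum of the surviving quantities and compare them on successive intervals of $k$: $\nu_\ell=4$ is smallest for $k=0$; the $a=0,b=3$ value $5$ for $1\leq k\leq\lfloor m/8\rfloor$; the $a=1,b=2$ value $6$ for $\lfloor m/8\rfloor<k\leq\lfloor m/4\rfloor$; the $a=1,b=3$ value for $\lfloor m/4\rfloor<k\leq\lfloor m/2\rfloor$ (where it equals $8$) and for $\lceil m/2\rceil\leq k\leq\lfloor 3m/4\rfloor-2$ (where it equals $8(\lceil k-\tfrac{m}{2}\rceil+1)$); at $k=\lfloor 3m/4\rfloor-1$ the $a=1,b=0$ value overtakes the $a=1,b=3$ value and stays minimal up to $k=m-2$; and for $k=m-1$ all other contributions become infeasible, leaving $d_{ORD}=\nu_\ell=4m$. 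The main obstacle is the bookkeeping: pinning down, for each $b$ in each of the five cases, the exact range of $k$ on which a valid $c$ exists --- floor/ceiling inequalities whose truth can depend on $k\bmod 2,3,5$ --- and then checking the pairwise comparisons at the endpoints of these ranges, in particular the crossover at $k=\lfloor 3m/4\rfloor-1$ where the minimiser passes from $b=3$ to $b=0$, and the uniform dismissal of all sub-cases with $c\geq m$.
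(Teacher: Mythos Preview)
Your proposal is correct and follows exactly the approach the paper intends: the paper gives no proof for Lemma~\ref{dord2} beyond the sentence ``Using the same arguments the following results are obtained,'' and what you have written is precisely that argument, carried out with the data $\rho_{\ell+1}=(1,1,k)$ in place of $(1,0,k)$. Your case split, the identification of the minimising sub-cases (in particular the role of $a=0,\,b=3$ for the value $5$, of $a=1,\,b=2$ for $6$, of $a=1,\,b=3$ for the $8$ and $8(\lceil k-m/2\rceil+1)$ ranges, and of the combined $a=1,\,b=0$ contribution for the penultimate line), and the crossover analysis at $k=\lfloor 3m/4\rfloor-1$ all match the computation one obtains by rerunning the proof of Lemma~\ref{dord1}.
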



\begin{lemma} \label{dord4}
If $\rho_{\ell+1}=(1,2,k)$ for some $k<m$ then
$$d_{ORD}(C_\ell(P_\infty))=\begin{cases} 6, \ if \ k=0, \\ 8, \ \ \textrm{if} \ \ k \leq \lfloor \frac{m}{4} \rfloor, \\ 8(\lceil k -\frac{m}{4} \rceil+1), \ \ \textrm{if} \ \ \lceil \frac{m}{4} \rceil \leq k \leq \lfloor \frac{m}{2} \rfloor -2,
\\  2(\lceil k +\frac{m}{2} \rceil+1)+6(\lceil k +\frac{m}{2} \rceil-m+1), \ \ \textrm{if} \ \ \lfloor \frac{m}{2} \rfloor -1 \leq k \leq \lfloor \frac{3m}{4} \rfloor -2,
\\  4(\lceil k +\frac{m}{4} \rceil+1)+4(\lceil k +\frac{m}{4} \rceil-m+1), \ \ \textrm{if} \ \ \lfloor \frac{3m}{4} \rfloor -1 \leq k \leq m-2,
\\ \nu_\ell=6m, \ \ \textrm{if} \ \ k= m-1. \end{cases}$$
\end{lemma}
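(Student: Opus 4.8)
The plan is to repeat verbatim the strategy used in the proof of Lemma \ref{dord1}. Since $\rho_{\ell+1}=(1,2,k)$ with $k<m$, Lemma \ref{i1} gives $\nu_\ell=6(k+1)$, and by definition $d_{ORD}(C_\ell(P_\infty))=\min\{\nu_s : s\geq\ell\}$ is the least value of $\nu_s$ among all $\rho_s\in H(P_\infty)$ satisfying the system \eqref{sist}, that is $\rho_{s+1}\geq\rho_{\ell+1}=7m+8k$ (using $2^n+1=3m$) together with $\nu_s\leq 6(k+1)$. Write $\rho_{s+1}=(a,b,c)$ with $a\in\{0,1\}$, $b\in\{0,1,2,3\}$, $c\geq 0$, so that the first inequality reads $3am+2bm+8c\geq 7m+8k$. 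I would split the analysis into the same five cases as before: $a=1$ with $c<m$; $a=1$ with $c\geq m$; $a=0$ with $c<m$; $a=0$ with $m\leq c<2m$; $a=0$ with $c\geq 2m$, substituting the relevant expression for $\nu_s$ from Lemma \ref{i1} (when $a=1$) or Lemma \ref{i0} (when $a=0$).

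In each case the constraint $\rho_{s+1}\geq\rho_{\ell+1}$ translates into a lower bound $c\geq\lceil k+\tfrac{(2-b)m}{4}\rceil$ (for $a=1$; an analogous shifted bound for $a=0$), while $\nu_s\leq 6(k+1)$ gives an upper bound on $c$; since $\nu_s$ is increasing in $c$ within each case, its minimum is attained at the smallest admissible $c$, namely $\max\{0,\lceil k+\tfrac{(2-b)m}{4}\rceil\}$, provided the corresponding $c$-interval is nonempty. Carrying this out for $a=1$, $c<m$: the value $b=2$, $c=k$ recovers $\nu_\ell$ itself; the value $b=3$, $c=\max\{0,\lceil k-\tfrac m4\rceil\}$ contributes $8(\max\{0,\lceil k-\tfrac m4\rceil\}+1)$ whenever $\lceil k-\tfrac m4\rceil\leq\lfloor\tfrac{3k-1}{4}\rfloor$; and $b=0,1$ are checked to give $\nu_s>\nu_\ell$. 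For $a=1$, $c\geq m$ the sub-cases $b=2,3$ are impossible, while $b=0$ yields $2(\lceil k+\tfrac m2\rceil+1)+6(\lceil k+\tfrac m2\rceil-m+1)$ and $b=1$ yields $4(\lceil k+\tfrac m4\rceil+1)+4(\lceil k+\tfrac m4\rceil-m+1)$, each only in the appropriate range of $k$ near $m-1$. For $a=0$ one verifies, exactly as in Cases 3--5 of Lemma \ref{dord1}, that every admissible $\nu_s$ exceeds the minimum already found, so those cases can be discarded.

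Finally I would collect the surviving candidate values $8$, $8(\lceil k-\tfrac m4\rceil+1)$, $2(\lceil k+\tfrac m2\rceil+1)+6(\lceil k+\tfrac m2\rceil-m+1)$, $4(\lceil k+\tfrac m4\rceil+1)+4(\lceil k+\tfrac m4\rceil-m+1)$, and $\nu_\ell=6m$, compare them pairwise, and sort the result by the ranges of $k$; the thresholds $\lfloor m/4\rfloor$, $\lfloor m/2\rfloor-1$, $\lfloor 3m/4\rfloor-1$ separating the pieces arise precisely from these comparisons (and the endpoint $k=m-1$ forces $\nu_\ell=6m$ exactly as in Lemma \ref{dord1}), giving the stated formula.

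The main obstacle is the bookkeeping of the floor and ceiling functions: for each residue class of $k$ modulo the relevant small integers ($2$, $3$, $4$, $5$), one must determine exactly when each $c$-interval is nonempty and which of the competing minima is the smallest, and then pin down the precise integer thresholds that delimit the branches. These are the same elementary but delicate inequality verifications as in the proof of Lemma \ref{dord1}, and I expect them to be resolved by direct checking.
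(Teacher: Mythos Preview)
Your proposal follows the paper's approach, which simply states that the result is obtained ``using the same arguments'' as Lemma~\ref{dord1}. One point deserves care: for $a=1$, $c<m$, you dismiss $b\in\{0,1\}$ as yielding $\nu_s>\nu_\ell$, but this fails at the lower endpoints of the fourth and fifth branches. For instance, at $k=\lfloor m/2\rfloor-1$ (with $m$ odd) one has $\lceil k+m/2\rceil=m-1<m$, so the sub-case $a=1$, $b=0$, $c=m-1$ gives $\nu_s=2m<\nu_\ell=6(k+1)$, and this is the actual minimum; your displayed formula $2(\lceil k+m/2\rceil+1)+6(\lceil k+m/2\rceil-m+1)$ still evaluates correctly there because the second summand vanishes, but the value does not arise from the $c\ge m$ sub-case (where the least admissible $c$ would be $m$, giving $2m+8$). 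The same phenomenon occurs for $b=1$ at $k=\lfloor 3m/4\rfloor-1$. So these $c<m$ contributions from $b\in\{0,1\}$ should be retained when assembling the final comparison; apart from this boundary bookkeeping, your outline is sound and matches the paper.
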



\subsection{Computation of $ d_{ORD}(C_{\ell}(P_\infty))$ for $ \rho_{\ell+1}=(0,j,k)$ and $\rho_{\ell} \leq 2g$}\label{Sec:d_ORD_sub3}
Using the same arguments as above we obtain the following results in the case $\rho_{\ell} \leq 2g$. 

 \begin{lemma} \label{dord5}
If $\rho_{\ell+1}=(0,0,k)$ for some $k<m$ then
$$d_{ORD}(C_\ell(P_\infty))=\begin{cases} 2, \ \ \textrm{if} \ \ k \leq \lfloor \frac{3m}{8} \rfloor, \\ 3, \ \ \textrm{if} \ \ \lceil \frac{3m}{8} \rceil \leq k \leq \lfloor \frac{m}{2} \rfloor, \\ 4, \ \ \textrm{if} \ \ \lceil \frac{m}{2} \rceil \leq k \leq \lfloor \frac{5m}{8} \rfloor, \\ 5, \ \ \textrm{if} \ \ \lceil \frac{5m}{8} \rceil \leq k \leq \lfloor \frac{3m}{4} \rfloor, \\ 6, \ \ \textrm{if} \ \ \lceil \frac{3m}{4} \rceil \leq k \leq \lfloor \frac{7m}{8} \rfloor, \\ 8, \ \ \textrm{if} \ \ \lceil \frac{7m}{8} \rceil \leq k \leq m-1. \end{cases}$$
\end{lemma}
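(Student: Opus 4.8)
The argument follows the pattern of the proof of Lemma~\ref{dord1}. Here $\rho_{\ell+1}=(0,0,k)=8k$ and, by Lemma~\ref{i0}, $\nu_\ell=k+1$; since $d_{ORD}(C_\ell(P_\infty))=\min\{\nu_s : \rho_{s+1}\geq\rho_{\ell+1}\}$, the plan is to minimise $\nu_s$ over all non-gaps $\rho_{s+1}=(a,b,c)=3am+2bm+8c$ (with $a\in\{0,1\}$, $b\in\{0,1,2,3\}$, $c\geq 0$) subject to $3am+2bm+8c\geq 8k$ and $\nu_s\leq\nu_\ell$, splitting as there according to $a$ and to whether $c<m$, $m\leq c<2m$ or $c\geq 2m$.

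First I would dispose of every contribution with $c\geq m$: by Lemmas~\ref{i1} and~\ref{i0}, such a $\nu_s$ is at least $(a+1)(c+1)\geq m+1$, which for $n\geq 5$ (so $m\geq 11$) already exceeds all the values appearing in the statement; in particular the subcases $m\leq c<2m$ and $c\geq 2m$ of Lemma~\ref{i0} are irrelevant. So one may assume $0\leq c<m$, where $\nu_s=2(b+1)(c+1)$ if $a=1$ and $\nu_s=(b+1)(c+1)+\lfloor b/3\rfloor(c+1)$ if $a=0$. For each of the eight pairs $(a,b)$ the inequality $3am+2bm+8c\geq 8k$ is solved by $c\geq c(a,b):=\max\{0,\lceil k-(3a+2b)m/8\rceil\}$, and since $c(a,b)\leq k<m$ this value of $c$ is admissible; hence the least $\nu_s$ for that pair is attained at $c=c(a,b)$. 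This produces the candidate values $\nu_\ell=k+1$ (from $(0,0,k)$), $2(c(1,0)+1)$, $3(c(0,2)+1)$, $4(c(1,1)+1)$, $5(c(0,3)+1)$, $6(c(1,2)+1)$, and $8$ (from $(1,3,0)$, since $9m>8k$ forces $c(1,3)=0$); the pair $(0,1,\cdot)$ may be dropped, being dominated by $(1,0,\cdot)$ because $c(1,0)\leq c(0,1)$.

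It remains to compare these seven quantities on the six ranges of the statement. A candidate $r\,(c(a,b)+1)$ equals $r$ exactly when $k$ does not exceed the threshold $(3a+2b)m/8$, which is $\tfrac{3m}{8},\tfrac{m}{2},\tfrac{5m}{8},\tfrac{3m}{4},\tfrac{7m}{8}$ for $r=2,3,4,5,6$ respectively, and is at least $2r$ otherwise; the value $8$ is available for every $k<m$. Since $m$ is odd, none of these thresholds is an integer, so the ranges $[1,\lfloor 3m/8\rfloor]$, $[\lceil 3m/8\rceil,\lfloor m/2\rfloor],\dots,[\lceil 7m/8\rceil,m-1]$ partition $\{1,\dots,m-1\}$, and on the $j$-th of them the corresponding value in the list $2,3,4,5,6,8$ is the minimum (the value $7$ never occurs, since no pair $(a,b)$ gives $\nu_s=7$ at $c=0$). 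The one point requiring care---and the only genuine obstacle---is to check that on each range every remaining candidate is strictly larger than this minimum: the candidates of smaller $r$ now have $c(a,b)\geq 1$ and hence value $\geq 2r'$, those of larger $r$ are trivially bigger, and $\nu_\ell=k+1$ exceeds it because $k$ is past the earlier thresholds. Each of these comparisons is an elementary inequality in which $m\geq 11$ (i.e.\ $n\geq 5$) is used to keep the relevant ceiling at least $1$---indeed at least $2$---just past a threshold; this bookkeeping is entirely parallel to Cases~1--5 in the proof of Lemma~\ref{dord1}. Collecting the six cases yields the stated formula.
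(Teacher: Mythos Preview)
Your approach is correct and is essentially the same as the paper's: the paper gives no separate proof of this lemma, simply writing ``Using the same arguments the following results are obtained,'' in reference to the case-by-case analysis carried out for Lemma~\ref{dord1}. Your outline follows exactly that pattern --- enumerate the pairs $(a,b)$, find the smallest admissible $c$ for each, and compare the resulting candidate values of $\nu_s$ across the six $k$-ranges --- and your disposal of the regime $c\geq m$ (via $\nu_s\geq m+1>8$) is a clean shortcut.

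One small imprecision worth flagging: your blanket claim that the ``smaller $r'$'' candidates satisfy $\nu_s\geq 2r'$ is not by itself enough in the later ranges (for instance, when $r_j=5$ or $r_j=8$ the bound $2r'=4$ from $(1,0)$ is below $r_j$), and the remark that the relevant ceiling is ``at least $2$ just past a threshold'' is not literally true for the threshold just crossed (there $c(a,b)=1$). What actually makes the comparison go through is that for a candidate whose threshold lies \emph{several} steps back, say at $(3a'+2b')m/8$, one has $c(a',b')\geq\lceil (r_j-r')\cdot m/8\rceil$, which for $m\geq 11$ is large enough to push $r'(c(a',b')+1)$ above $r_j$. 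This is exactly the kind of inequality handled case by case in Lemma~\ref{dord1}, so your deferral to that bookkeeping is legitimate; just be aware that the one-line summary you give understates what is needed.
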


\begin{lemma} \label{dord6}
If $\rho_{\ell+1}=(0,1,k)$ for some $k<m$ then
$$d_{ORD}(C_\ell(P_\infty))=\begin{cases} 2, \ \ \textrm{if} \ \ k \leq \lfloor \frac{m}{8} \rfloor, \\ 3, \ \ \textrm{if} \ \ \lceil \frac{m}{8} \rceil \leq k \leq \lfloor \frac{m}{4} \rfloor, \\ 4, \ \ \textrm{if} \ \ \lceil \frac{m}{4} \rceil \leq k \leq \lfloor \frac{3m}{8} \rfloor, \\ 5, \ \ \textrm{if} \ \ \lceil \frac{3m}{8} \rceil \leq k \leq \lfloor \frac{m}{2} \rfloor, \\ 6, \ \ \textrm{if} \ \ \lceil \frac{m}{2} \rceil \leq k \leq \lfloor \frac{5m}{8} \rfloor, \\ 8(\max\{0,\lceil k-\frac{7m}{8} \rceil\}+1), \ \ \textrm{if} \ \ \lceil \frac{5m}{8} \rceil \leq k \leq m-1. \end{cases}$$
\end{lemma}

\begin{lemma}
If $\rho_{\ell+1}=(0,3,k)$ for some $k<m$ then
$$d_{ORD}(C_\ell(P_\infty))=\begin{cases} 6, \ \ \textrm{if} \ \ k \leq \lfloor \frac{m}{8} \rfloor, \\ 8(\max\{0,\lceil \frac{3m}{8} \rceil\}+1), \ \ \textrm{if} \ \ \lceil \frac{m}{8} \rceil \leq k \leq m-2, \\ \nu_\ell=5(k+1), \ \ \textrm{if} \ \ k=m-1. \end{cases}$$
\end{lemma}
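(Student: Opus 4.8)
The plan is to reproduce, essentially verbatim, the argument used to prove Lemma~\ref{dord1}, now with $\rho_{\ell+1}=(0,3,k)$. First, since $k<m$, Lemma~\ref{i0} with $j=3$ gives $\nu_\ell=(3+1)(k+1)+\lfloor 3/3\rfloor(k+1)=5(k+1)$. Since $d_{ORD}(C_\ell(P_\infty))$ equals the least value of $\nu_s$ among the indices $s$ with $\rho_{s+1}\geq\rho_{\ell+1}$, it is enough to minimise $\nu_s$ over the solutions $\rho_{s+1}=(a,b,c)$, with $a\in\{0,1\}$, $b\in\{0,1,2,3\}$ and $c\geq 0$, of the system
\[
\begin{cases} \rho_{s+1}\geq\rho_{\ell+1}=(0,3,k),\\ \nu_s\leq\nu_\ell=5(k+1). \end{cases}
\]
Writing $\rho_{s+1}=3am+2bm+8c$ and $\rho_{\ell+1}=6m+8k$, the first inequality becomes $8c\geq(6-3a-2b)m+8k$.

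Exactly as in Lemma~\ref{dord1}, I would split the discussion into the five cases $\{a=1,\ c<m\}$, $\{a=1,\ c\geq m\}$, $\{a=0,\ c<m\}$, $\{a=0,\ m\leq c<2m\}$ and $\{a=0,\ c\geq 2m\}$, according to which branch of the formulas of Lemmas~\ref{i1} and~\ref{i0} computes $\nu_s$. In each case, and for each admissible $b$, the inequality $8c\geq(6-3a-2b)m+8k$ produces a lower bound $c\geq\max\{0,\lceil k+\frac{(6-3a-2b)m}{8}\rceil\}$, whereas $\nu_s\leq 5(k+1)$ produces an upper bound on $c$; imposing that the two bounds be compatible yields an explicit interval for $k$, which one then subdivides into residue classes of $k$ modulo the small integer appearing in the upper bound (just as in the subcases $b=1,2,3$ of Lemma~\ref{dord1}), and on each resulting interval the minimum of $\nu_s$ is attained at the smallest admissible $c$, because $\nu_s$ is monotone increasing in $c$ inside each branch. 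This gives, as in \eqref{min1} and \eqref{min3}, a piecewise description of $\min\{\nu_s\mid a=1\}$ and of $\min\{\nu_s\mid a=0\}$, and the statement follows by taking the minimum of the two.

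One expects, and the discussion should confirm, that the branch $a=1,\ c\geq m$ and all the $a=0$ branches are either empty or, in view of the constraint $c<m$ that limits their range of $k$, dominated by the contributions coming from $a=1,\ c<m$, where $\nu_s=2(b+1)(c+1)$: there $b=0,1$ are admissible only for large $k$ and are themselves dominated, $b=2$ gives $\nu_s=6\big(\max\{0,\lceil k-\frac{m}{8}\rceil\}+1\big)$, which realises the minimum for the smallest values of $k$, and $b=3$ gives $\nu_s=8\big(\max\{0,\lceil k-\frac{3m}{8}\rceil\}+1\big)$, admissible for all $k\leq m-1$ and realising the minimum in the intermediate range. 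For $k=m-1$ the smallest admissible $c$ in every non-trivial branch already forces $\nu_s>\nu_\ell$, so only $s=\ell$ remains and $d_{ORD}(C_\ell(P_\infty))=\nu_\ell=5m$; this last fact also follows at once from the symmetry of $H(P_\infty)$, since $\rho_{\ell+1}=6m+8(m-1)=(2g-1)+5m$ with $5m=(1,1,0)\in H(P_\infty)\setminus\{0\}$, so that Proposition~\ref{campillo} applies. I expect no conceptual difficulty, the scheme being identical to that of Lemma~\ref{dord1}; the main obstacle will be the bookkeeping of the floors and ceilings — carrying out the compatibility analysis for each pair $(a,b)$ and each residue class of $k$, and then comparing the surviving branches to decide which one attains the minimum on each subinterval of $\{0,1,\dots,m-1\}$.
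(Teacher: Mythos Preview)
Your proposal is correct and follows exactly the paper's approach: the paper's entire proof for this lemma is the single sentence ``Using the same arguments the following results are obtained,'' referring back to the case analysis of Lemma~\ref{dord1}, which is precisely the template you spell out. Your identification of the dominant branches---$(a,b)=(1,2)$ giving $6\big(\max\{0,\lceil k-\tfrac{m}{8}\rceil\}+1\big)$ for small $k$, $(a,b)=(1,3)$ giving $8\big(\max\{0,\lceil k-\tfrac{3m}{8}\rceil\}+1\big)$ in the middle range, and $\nu_\ell$ itself at $k=m-1$---is exactly what the method yields; in particular your formula in the $b=3$ branch is the intended one (the printed $\lceil\tfrac{3m}{8}\rceil$ in the statement is evidently a typo for $\lceil k-\tfrac{3m}{8}\rceil$).
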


\begin{lemma} \label{dord7}
If $\rho_{\ell+1}=(0,2,k)$ for some $k<m$ then
$$d_{ORD}(C_\ell(P_\infty))=\begin{cases} 4, \ \ \textrm{if} \ \ k \leq \lfloor \frac{m}{8} \rfloor, \\ 5, \ \ \textrm{if} \ \ \lceil \frac{m}{8} \rceil \leq k \leq \lfloor \frac{m}{4} \rfloor, \\ 6, \ \ \textrm{if} \ \ \lceil \frac{m}{4} \rceil \leq k \leq \lfloor \frac{3m}{8} \rfloor, \\ 8(\max\{0, \lceil k-\frac{5m}{8}\rceil \}+1), \ \ \textrm{if} \ \ \lceil \frac{3m}{8} \rceil \leq k \leq \lfloor \frac{7m}{8} \rfloor-2, \\ 2(\lceil k+\frac{m}{8} \rceil+1), \ \ \textrm{if} \ \ \lfloor \frac{7m}{8} \rfloor -1 \leq k \leq m-3, \\ 3(k+1)=\nu_\ell, \ \ \textrm{if} \ \ k  \in \{m-2, m-1\}. \end{cases}$$
\end{lemma}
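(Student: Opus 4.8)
The plan is to mimic the proof of Lemma~\ref{dord1}. By Lemma~\ref{i0} with $j=2$ and $k<m$ one has $\nu_\ell=3(k+1)$, so by definition $d_{ORD}(C_\ell(P_\infty))$ is the smallest value of $\nu_s$, with $\rho_s\in H(P_\infty)$, for which the system
\begin{equation}\label{sistdord7}
\begin{cases}\rho_{s+1}\geq\rho_{\ell+1}=(0,2,k),\\ \nu_s\leq\nu_\ell=3(k+1)\end{cases}
\end{equation}
has a solution. Writing $\rho_{s+1}=(a,b,c)$ with $a\in\{0,1\}$, $b\in\{0,1,2,3\}$, $c\geq0$, and using $\rho_{\ell+1}=4m+8k$ and $\rho_{s+1}=3am+2bm+8c$, the first condition in~\eqref{sistdord7} reads $3am+2bm+8c\geq4m+8k$. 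As in Lemma~\ref{dord1} I would then run a case-by-case analysis on $a\in\{0,1\}$ and on the position of $c$ relative to $m$ and $2m$, expressing $\nu_s$ by means of Lemma~\ref{i1} when $a=1$ and of Lemma~\ref{i0} when $a=0$, and taking in each case the minimum of $\nu_s$ over the solutions of~\eqref{sistdord7}.

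First I would treat $a=1$. For $c<m$ the inequality becomes $2bm+8c\geq m+8k$; for $b=0,1,2,3$ this gives $c\geq\max\{0,\lceil k+\tfrac{(1-2b)m}{8}\rceil\}$, i.e.\ the lower bounds $\lceil k+\tfrac{m}{8}\rceil$, $\max\{0,\lceil k-\tfrac{m}{8}\rceil\}$, $\max\{0,\lceil k-\tfrac{3m}{8}\rceil\}$, $\max\{0,\lceil k-\tfrac{5m}{8}\rceil\}$, while $\nu_s=2(b+1)(c+1)\leq3(k+1)$ bounds $c$ from above. In each subcase the minimum is attained at the least admissible $c$, provided the corresponding interval is nonempty; unwinding the nonemptiness condition (of the form $\lceil\cdots\rceil\leq\lfloor\cdots\rfloor$) into an explicit range of $k$ yields the candidate values $2(\lceil k+\tfrac{m}{8}\rceil+1)$, $4(\max\{0,\lceil k-\tfrac{m}{8}\rceil\}+1)$, $6(\max\{0,\lceil k-\tfrac{3m}{8}\rceil\}+1)$ and $8(\max\{0,\lceil k-\tfrac{5m}{8}\rceil\}+1)$. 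For $c\geq m$, Lemma~\ref{i1} gives $\nu_s=2(b+1)(c+1)+2(3-b)(c-m+1)\geq2(c+1)\geq2(m+1)$, and a direct comparison shows these never improve on the values above, so this subcase is discarded exactly as in Case~2 of Lemma~\ref{dord1}. Next I would treat $a=0$, where the inequality reads $2bm+8c\geq4m+8k$ and I would split according to $c<m$, $m\leq c<2m$, $c\geq2m$. For $c<m$ the branch $b=2$ forces $c=k$ and returns $\nu_s=\nu_\ell$, whereas $b=0,1,3$ (with $\nu_s$ equal to $c+1$, $2(c+1)$, $5(c+1)$ respectively) contribute a handful of further candidates obtained as before. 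For $m\leq c<2m$ and $c\geq2m$ one argues as in Cases~4 and 5 of Lemma~\ref{dord1}: since $\nu_s\geq(b+1)(c+1)$ with $c\geq m>k$ (resp.\ $c\geq2m$), the branches with $b\geq1$ are infeasible and the $b=0$ branch gives values that are checked to exceed the minimum already found, so these subcases are negligible.

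Finally I would compare, for every integer $k$ with $0\leq k<m$, the finitely many admissible candidates and keep the smallest; this yields the six regimes of the statement, the thresholds $\lfloor m/8\rfloor$, $\lfloor m/4\rfloor$, $\lfloor 3m/8\rfloor$, $\lfloor 7m/8\rfloor-2$ being exactly the values of $k$ at which the dominant subcase changes, and $d_{ORD}(C_\ell(P_\infty))=\nu_\ell=3(k+1)$ for $k\in\{m-2,m-1\}$ since there no value strictly below $\nu_\ell$ is admissible. I expect the main obstacle to be precisely this last comparison together with the nonemptiness analysis: tracking the floor/ceiling corrections in the conditions $\lceil\cdots\rceil\leq\lfloor\cdots\rfloor$, handling the residue-class case splits (modulo $3$, $5$ and $8$) that already appeared in Lemma~\ref{dord1}, and pinning down the exact transition points where one candidate overtakes another.
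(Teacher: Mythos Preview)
Your proposal is correct and follows exactly the approach the paper itself indicates: the paper does not give a separate proof of this lemma but states that the result is obtained ``using the same arguments'' as in Lemma~\ref{dord1}, which is precisely what you do, with the case analysis on $(a,b)$ and on the position of $c$ relative to $m$ and $2m$. One small caution: when you discard the subcase $a=1$, $c\geq m$ ``exactly as in Case~2 of Lemma~\ref{dord1}'', note that the literal argument there ($\nu_s\geq 2(c+1)>2(k+1)=\nu_\ell$) does not transfer verbatim, since now $\nu_\ell=3(k+1)$; your phrase ``a direct comparison shows these never improve on the values above'' is the right fix, but it really is a comparison with the minima already found in the $c<m$ branches rather than an immediate contradiction with $\nu_\ell$.
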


\begin{lemma} \label{dord8}
If $\rho_{\ell+1}=(0,0,k)$ for some $m \leq k <2m$ then
$$d_{ORD}(C_\ell(P_\infty))=\begin{cases} 8(\lceil k-\frac{9m}{8}\rceil+1), \ \ \textrm{if} \ \ m \leq k < \lfloor\frac{11m}{8} -1 \rfloor, \\
2(\lceil k-\frac{3m}{8} \rceil+1)+\max\{0, 6(\lceil k-\frac{3m}{8}\rceil -m+1)\}, \ \ \textrm{if} \ \ \lfloor\frac{11m}{8} -1 \rfloor \leq k <2m. \end{cases}$$
\end{lemma}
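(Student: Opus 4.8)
The plan is to follow verbatim the strategy used in the proof of Lemma~\ref{dord1}: compute $\nu_\ell$ from Lemma~\ref{i0}, and then obtain $d_{ORD}(C_\ell(P_\infty))=\min\{\nu_s : \rho_{s+1}\ge\rho_{\ell+1}\}$ by minimizing $\nu_s$ over the solutions of System~\eqref{sist}, organizing the computation as a case analysis on the triple $(a,b,c)$ with $\rho_{s+1}=(a,b,c)$, $a\in\{0,1\}$, $b\in\{0,1,2,3\}$, $c\ge 0$, and on the range of $c$ (namely $c<m$, or $m\le c<2m$, or $c\ge 2m$).

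First I would record that for $\rho_{\ell+1}=(0,0,k)$ with $m\le k<2m$, Lemma~\ref{i0} gives $\nu_\ell=(k+1)+5(k-m+1)$, and that $\rho_{\ell+1}=8k$. Then, for each fixed pair $(a,b)$, the inequality $\rho_{s+1}\ge\rho_{\ell+1}$ reads $3am+2bm+8c\ge 8k$, i.e.\ $c\ge\lceil k-(3a+2b)m/8\rceil$ (together with $c\ge 0$); the inequality $\nu_s\le\nu_\ell$, read through Lemma~\ref{i1} (if $a=1$) or Lemma~\ref{i0} (if $a=0$) according to the range of $c$, gives an upper bound on $c$, so the admissible $c$ form an interval and, since $\nu_s$ is nondecreasing in $c$ within each case, the smallest $\nu_s$ in that case is attained at the smallest admissible $c$.

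The key point I expect is that only two families produce the stated value: $(a,b)=(1,3)$ with $c<m$ and $(a,b)=(1,0)$ with $c<m$ (and its continuation $c\ge m$). For $(1,3,c)$ one has $\nu_s=8(c+1)$, and $9m+8c\ge 8k$ forces $c\ge\max\{0,\lceil k-9m/8\rceil\}$, yielding the candidate $8(\max\{0,\lceil k-9m/8\rceil\}+1)$; one checks this is $\le\nu_\ell$ on the whole range, and that $c<m$ automatically since $k<2m$. For $(1,0,c)$ one has $\nu_s=2(c+1)$ when $c<m$ and $\nu_s=2(c+1)+6(c-m+1)$ when $c\ge m$, while $3m+8c\ge 8k$ forces $c\ge\lceil k-3m/8\rceil$; since $k<2m$, one has $\lceil k-3m/8\rceil<m$ exactly for $k<11m/8$, which is precisely why the second branch of the statement carries the correction $\max\{0,6(\lceil k-3m/8\rceil-m+1)\}$. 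Finally, comparing $8(\lceil k-9m/8\rceil+1)$ with $2(\lceil k-3m/8\rceil+1)$ shows that the first is the smaller of the two exactly when $k\le 11m/8-1$, which explains the threshold $\lfloor 11m/8-1\rfloor$ separating the two branches.

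The main obstacle will be the bookkeeping for the remaining cases: the pairs $(1,1),(1,2),(0,1),(0,2),(0,3)$ with $c<m$, plus all the sub-cases $c\ge m$ for $a=1$ and $m\le c<2m$, $c\ge 2m$ for $a=0$. For each of these I would show that either no admissible $c$ exists on the range $m\le k<2m$, or that the resulting minimal $\nu_s$ is strictly larger than $\min\{8(\max\{0,\lceil k-9m/8\rceil\}+1),\,2(\lceil k-3m/8\rceil+1)+\max\{0,6(\lceil k-3m/8\rceil-m+1)\}\}$; each such verification is an elementary inequality between piecewise-linear-in-$k$ expressions with floor/ceiling corrections, but the number of cases (and of subintervals of $k$ produced by the floors) is what makes the argument long. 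Collecting the two surviving candidates and taking their minimum then gives the two-branch formula in the statement.
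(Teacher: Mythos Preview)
Your proposal is correct and follows essentially the same approach as the paper, which does not write out the proof of Lemma~\ref{dord8} but states it is obtained ``using the same arguments'' as Lemma~\ref{dord1}. You have correctly identified the governing value $\nu_\ell=(k+1)+5(k-m+1)$ from Lemma~\ref{i0}, the two surviving candidate families $(1,3,c)$ and $(1,0,c)$ (the latter split according to $c<m$ or $c\ge m$), and the crossover at $k\approx 11m/8-1$; your observation that the $(1,3)$-candidate should carry a $\max\{0,\cdot\}$ is well taken and consistent with the analogous statements in Lemmas~\ref{dord6} and~\ref{dord7}.
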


\subsection{Computation of $d_{ORD}(C_{\ell}(P_\infty))$ for $\rho_{\ell}>2g$} \label{Sec:d_ORD_sub4}

\begin{proposition}\label{rimarco}
The Weierstrass semigroup $H(P_\infty)=\langle q^3, m q, q^n+1 \rangle$ is telescopic.
\end{proposition}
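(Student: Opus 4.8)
The plan is to verify directly that the generating sequence $(a_1,a_2,a_3)=(q^3,\,mq,\,q^n+1)$ satisfies the two defining conditions of a telescopic semigroup, and then to check that the semigroup generated by this sequence is exactly $H(P_\infty)$ as described by \cite[Corollary 3.5]{GOS}. First I would record the relevant arithmetic of $m=(q^n+1)/(q+1)$: one has $q^n+1=(q+1)m$, so $a_3=(q+1)m$, and $\gcd(q^3,q^n+1)=\gcd(q^3,(q+1)m)$; since $\gcd(q,q+1)=1$ and $q\nmid m$ (as $m\equiv 1\pmod q$), this gcd divides $\gcd(q^3,m)=1$, whence already $d_2=\gcd(a_1,a_2)=\gcd(q^3,mq)=q\cdot\gcd(q^2,m)=q$ and $d_3=\gcd(a_1,a_2,a_3)=\gcd(q,(q+1)m)=1$. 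So the coprimality condition $\gcd(a_1,a_2,a_3)=1$ holds.

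Next I would check the two membership conditions. For $i=2$: $d_2=q$, and we need $a_2/d_2=m\in\langle a_1/d_1\rangle=\langle q^3\rangle$ — wait, $d_1=a_1=q^3$ in the convention of \cite{KP}, so $a_1/d_1=1$ and $\langle 1\rangle=\N_0$ contains $m$ trivially. For $i=3$: $d_3=1$, $d_2=q$, and we need $a_3/d_3=q^n+1\in\langle a_1/d_2,\,a_2/d_2\rangle=\langle q^3/q,\,mq/q\rangle=\langle q^2,\,m\rangle$. This is the crux: I must show $q^n+1$ lies in the numerical semigroup generated by $q^2$ and $m$. Using $q^n+1=(q+1)m$, it suffices to write $(q+1)m=\alpha q^2+\beta m$ with $\alpha,\beta\in\N_0$; taking $\beta=q+1$ and $\alpha=0$ works immediately. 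Hence all conditions are satisfied and $\langle q^3,mq,q^n+1\rangle$ is telescopic.

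Finally I would confirm that this telescopic semigroup coincides with $H(P_\infty)$. By the telescopic structure, every element of $\langle a_1,a_2,a_3\rangle$ has a unique normal form $k a_1 + j' a_2 + i a_3$ with $0\le k$, $0\le j' < d_2/d_3 = q$, $0\le i < d_1/d_2$ — one checks $d_1/d_2 = q^3/q = q^2$... here one must be careful, since in the paper's stated form $H(P_\infty)=\langle q^3,q^n+1\rangle\cup\{\dots\}$ already appears via the divisors $(z)$, $(x)$ giving pole orders $q^3$ and $m(q+1)=q^n+1$ at $P_\infty$, and $(y)$ giving pole order $mq$; so $q^3,\,mq,\,q^n+1\in H(P_\infty)$ and thus $\langle q^3,mq,q^n+1\rangle\subseteq H(P_\infty)$. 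For the reverse inclusion I would invoke that the genus of a telescopic semigroup generated by $(a_1,\dots,a_k)$ equals $\tfrac12\big(1+\sum_{i=1}^k(d_{i-1}/d_i-1)a_i\big)$ (see \cite{KP}), compute this to be $\tfrac12(q-1)(q^{n+1}+q^n-q^2)$, match it with the known genus of $GGS(q,n)$, and conclude equality of the two semigroups since one is contained in the other with the same number of gaps. The main obstacle is this last genus bookkeeping: carrying out the sum $\sum(d_{i-1}/d_i-1)a_i = (q^3-1)q^3/q^2\cdot\text{(wait, recompute with }d_0=a_1,\dots)$ correctly — i.e. keeping the indexing convention of \cite{KP} straight — and simplifying it to the target expression; the rest is routine divisibility.
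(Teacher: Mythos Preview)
Your verification of the telescopic conditions is correct and is exactly the paper's proof, just with the divisibility details spelled out: the paper likewise sets $(a_1,a_2,a_3)=(q^3,mq,q^n+1)$, computes $d_1=q^3$, $d_2=q$, $d_3=1$, and checks $a_2/d_2\in\langle a_1/d_1\rangle$ and $a_3/d_3=(q+1)m\in\langle q^2,m\rangle$.

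The ``main obstacle'' you worry about --- proving $\langle q^3,mq,q^n+1\rangle=H(P_\infty)$ via the telescopic genus formula --- is not part of this proposition at all. The equality $H(P_\infty)=\langle q^3,mq,q^n+1\rangle$ is taken as known from \cite[Corollary 3.5]{GOS} (it is written into the statement, not the conclusion), and the paper's proof does not address it; so you can simply drop that entire third paragraph.
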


\begin{proof}
Let $a_1=q^3$, $a_2=mq$, $a_3=q^n+1$, $d_0=0$, $d_1=q^3$, $d_2=\gcd(q^3,mq)=q$, $d_3=\gcd(q^3,mq,q^n+1)=1$. Then $a_i/d_i\in\langle a_1/d_{i-1},\ldots,a_{i-1}/d_{i-1} \rangle$ for $i=2,3$; that is, $H(P_\infty)$ is telescopic.
\end{proof}

Proposition \ref{rimarco} implies that $H(P_\infty)$ is symmetric, from \cite[Lemma 6.5]{KP}.
This also follows from the fact that the divisor $(2g-2)P_\infty$ is canonical; see \cite[Lemma 3.8]{GOS} and \cite[Remark 4.4]{KP}.

In the following, Proposition \ref{campillo} is used to reduce the direct computation of $d_{ORD}(C_\ell(P_\infty))$ with $\rho_{\ell}>2g$, only to those cases for which $\rho_{\ell+1}\ne2g-1+e$ for any $e \in H(P_\infty) \setminus \{0\}$. Since the cases in which $\rho_{\ell+1}=(0,0,k)$ for $k<2m$ or $\rho_{\ell+1}=(i,j,k)$ for $k<m$ have been already studied, they can be excluded.

\begin{proposition} \label{farran} Let $\rho_{\ell} \in H(P_\infty)$ with $\rho_{\ell}>2g$ and $\rho_{\ell+1}=(i,j,k)$ and $k \geq m$. If $\rho_{\ell+1} \ne (0,0,k)$ for any $k \in [m,2m)$, then $\rho_{\ell+1}-2g+1 \not\in H(P_\infty)$ if and only if $\rho_{\ell+1}=(0,1,k)$ for some $k \in [m,2m)$.\end{proposition}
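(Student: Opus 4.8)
The plan is to reduce the statement to an explicit arithmetic question about the semigroup $H(P_\infty)=\{i(2^n+1)+2jm+8k\mid i\in\{0,1\},\ j\in\{0,1,2,3\},\ k\ge 0\}$, and then dispatch the finitely many residue classes case by case. First I would record $2g-2$ in the $(i,j,k)$ encoding: since $2g-2=(q-1)(q^{n+1}+q^n-q^2)-2$ and here $q=2$, one has $2g-2=2^{n+1}+2^n-2^2-2=3\cdot 2^n-6$; using $2^n+1=3m$ this rewrites as $9m-3-6=9m-9$, so $2g-1=9m-8$, i.e. $2g-1=(0,0, (9m-8)/8)$ — more usefully, $2g-2=(1,3,(3m-9)/8)$ after matching $i=1,j=3$ (which accounts for $3m+6m=9m$) and solving $8k=-9$... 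I will in fact just carry $2g-1$ as the integer $N_{2g-1}:=9m-8$ and compute, for a general $\rho_{\ell+1}=(i,j,k)$, the integer $\rho_{\ell+1}-(2g-1)=(3i+2j)m+8k-(9m-8)$, and ask when this lies in $H(P_\infty)$.

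The key steps are then: (1) Write $\rho_{\ell+1}-2g+1 = (3i+2j-9)m+8k+8$ and split according to the value of $3i+2j$, which ranges over $\{0,2,3,4,5,6,7,9\}$ as $(i,j)$ varies; for each value determine the unique $(i',j')\in\{0,1\}\times\{0,1,2,3\}$ with $3i'+2j'\equiv 3i+2j-9\pmod{8}$ together with the required ``borrow'' in the $m$-coefficient, exactly as in the proof of Lemma~\ref{i1}; this uses that $m$ is coprime to $8$ so the $m$-coefficient mod $8$ is forced. (2) For each $(i,j)$ this yields a condition of the shape $k\ge k_0(i,j,m)$ for membership (since once the residues match, increasing $k$ only helps), except possibly for boundary effects near $k=m$; then impose the hypotheses $k\ge m$ and $\rho_{\ell}>2g$ (equivalently $\rho_{\ell+1}\ge 2g+1$, i.e. $(3i+2j)m+8k\ge 9m-6$) and also the standing exclusion $\rho_{\ell+1}\ne(0,0,k)$ for $k\in[m,2m)$. (3) Observe that for all $(i,j)\ne(0,0),(0,1)$ the resulting $k_0$ is $\le$ the forced lower bound on $k$ coming from $\rho_{\ell}>2g$, so $\rho_{\ell+1}-2g+1\in H(P_\infty)$; and for $(i,j)=(0,1)$, show $\rho_{\ell+1}-2g+1=(0,1,k)-(0,0,(9m-9)/8)$ type difference is negative or falls outside the semigroup precisely in the stated range $k\in[m,2m)$, while for $k\ge 2m$ it re-enters — consistent with the fact that $(0,0,k)$ for $k\ge 2m$ has already been handled and need not appear in the conclusion. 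A clean way to see the $(0,1)$ case is to note $(0,1,k)=2m+8k$ and $2g-1=9m-8$, so $(0,1,k)-(2g-1)=8k-7m+8=8(k-m)+(8-7m+8m)=8(k-m)+m$; this is $\equiv m\pmod 8$, hence to lie in $H(P_\infty)$ it needs $i=0$ (since $3m\equiv 3m$, $6m\equiv 6m$ give the wrong residue) and $2j'm\equiv m\pmod 8$, impossible since $2j'$ is even and $m$ is odd — wait, that shows it is \emph{never} in the semigroup, so I must instead allow the $m$-coefficient to be anything and only require the value to be a non-negative combination; the correct reduction is $8(k-m)+m$ with $m=(q^n+1)/3$, and one checks $8(k-m)+m = (q^n+1)/(q+1)\cdot 1 + 8(k-m)$, i.e. it equals $(0,1,k-m)$ — hence it lies in $H(P_\infty)$ as soon as $k\ge m$!

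So the genuinely delicate point — and the step I expect to be the main obstacle — is pinning down exactly which $(i,j)$ and which range of $k$ make $\rho_{\ell+1}-2g+1$ fail to be in $H(P_\infty)$, given that my naive computation above suggests $(0,1,\cdot)$ always works. The resolution must be that I have $2g-1$ slightly wrong or that the excluded ranges interact: the honest approach is to first \emph{correctly} compute $2g-2$ as a triple using the actual genus $\frac12(q-1)(q^{n+1}+q^n-q^2)$ at $q=2$, namely $g=\frac12(2^{n+1}+2^n-4)=3\cdot 2^{n-1}-2$, so $2g-1=3\cdot 2^n-5=9m-8$ (confirming the above), and then systematically tabulate, for each of the $8$ pairs $(i,j)$ and for $k$ in the three windows $[m,2m)$, $[2m,?)$, $[?,\infty)$ delimited by where carries occur, whether $\rho_{\ell+1}-(2g-1)\in H(P_\infty)$, using Remark~\ref{scrittura} for uniqueness and the membership description directly. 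The claim then follows by checking that the ``no'' answers are exactly $\{(0,1,k):k\in[m,2m)\}$ once $(0,0,k)$, $k\in[m,2m)$, is thrown out by hypothesis; I expect that the subtle interplay between the ceiling/floor carry-thresholds (the $m$, $2m$ breakpoints appearing in Lemmas~\ref{i0},~\ref{dord8}) and the constraint $\rho_\ell>2g$ is what makes a few of these sub-cases require careful inequality bookkeeping rather than being immediate.
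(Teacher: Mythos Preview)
Your overall strategy --- compute $e:=\rho_{\ell+1}-(2g-1)$ and test membership of $e$ in $H(P_\infty)$ case by case over the eight pairs $(i,j)$ --- is exactly the paper's approach. The part that goes wrong is your handling of the crucial case $(i,j)=(0,1)$, and the mistake is a misreading of the triple notation, not a wrong value of $2g-1$.

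Recall that $(i,j,k)$ encodes $3im+2jm+8k$; in particular $(0,1,k')=2m+8k'$, \emph{not} $m+8k'$. Your computation of $e$ for $\rho_{\ell+1}=(0,1,k)$ also drops a constant: with $k=m+s$,
\[
e=(0,1,k)-(2g-1)=2m+8k-(9m-8)=8(k-m)+m+8=m+8(s+1),
\]
not $8(k-m)+m$. The element $m+8(s+1)$ is certainly \emph{not} equal to $(0,1,k-m)$, and in fact it is usually not in $H(P_\infty)$ at all. To see this, note that any semigroup element has the form $(3i'+2j')m+8k'$ with $i'\in\{0,1\}$, $j'\in\{0,1,2,3\}$, $k'\ge 0$; since $m$ is odd and coprime to $8$, matching $m+8(s+1)$ forces $3i'+2j'\equiv 1\pmod 8$, and the only possibility in the allowed range is $(i',j')=(1,3)$, giving $3i'+2j'=9$. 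Then $8k'=m+8(s+1)-9m=8(s+1-m)$, so $k'=s+1-m$, which is nonnegative only when $s\ge m-1$, i.e.\ $k\ge 2m-1$. Hence for $k\in[m,2m-1)$ one has $e\notin H(P_\infty)$, which is precisely the content of the $(0,1)$ case. (At $k=2m-1$ one gets $e=9m=(1,3,0)\in H(P_\infty)$, a boundary the paper's ``by direct computation'' glosses over.)

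So there is no mystery about ``subtle interplay between carry-thresholds'': your value $2g-1=9m-8$ is correct, and the case analysis is entirely mechanical once you use the right encoding. The remaining pairs $(i,j)$ go through exactly as the paper does: for each one, $e$ rewrites as an explicit triple with nonnegative third coordinate under the hypothesis $k\ge m$ (or $k\ge 2m$ when $(i,j)=(0,0)$), showing $e\in H(P_\infty)$.
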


\begin{proof}
Write $k=m+s$ for some $s \geq 0$. We prove the claim using a case-by-case analysis with respect to the values of $i$ and $j$. We recall that $2g-1=(2^{n+1}+2^{n}-4)-1=9m-8$.
\ \\ \\
{\bf Case 1}: $i=1$. Clearly, $\rho_{\ell+1}=3m+2jb+8m+8s$.
\begin{itemize}
\item If $j=0$, then $\rho_{\ell+1}=3m+8m+8s=(9m-8)+(2m+8(s+1))=2g-1+e$. Writing $e=(0,1,s+1)$ we have that $e \in H(P_\infty)$, so this case cannot occur.
\item If $j=1$, then $\rho_{\ell+1}=3m+2m+8m+8s=(9m-8)+(4m+8(s+1))=2g-1+e$. Writing $e=(0,2,s+1)$ we have that $e \in H(P_\infty)$, so this case cannot occur.
\item If $j=2$, then $\rho_{\ell+1}=3m+4m+8m+8s=(9m-8)+(6m+8(s+1))=2g-1+e$. Writing $e=(0,3,s+1)$ we have that $e \in H(P_\infty)$, so this case cannot occur.
\item If $j=3$, then $\rho_{\ell+1}=3m+6m+8k=(9m-8)+(8(k+1))=2g-1+e$. Writing $e=(0,0,k+1)$ we have that $e \in H(P_\infty)$, so this case cannot occur.
\end{itemize}
\ \\ 
{\bf Case 2}: $i=0$. Clearly, $\rho_{\ell+1}=2jb+8k=2jb+8m+8s$.
\begin{itemize}
\item If $j=0$, then in particular we can write $k=2m+t$ for $t \geq 0$, since $k \geq 2m$. Thus,  $\rho_{\ell+1}=16m+8t=(9m-8)+(7m+8(t+1))=2g-1+e$. Writing $e=(1,2,t+1)$ we have that $e \in H(P_\infty)$, so this case cannot occur.

\item If $j=1$, then $\rho_{\ell+1}=2m+8k$. We first assume that $k \geq 2m$ and so that $k=2m+t$ for some $t \geq 0$. In this case $\rho_{\ell+1}=2m+16m+8t=(9m-8)+(9m+8(t+1))=2g-1+e$. Writing $e=(1,3,t+1)$ we have that $e \in H(P_\infty)$, so this case cannot occur. Thus, $k \in [m,2m)$. In this case, $\rho_{\ell+1}=2m+8m+8s=(9m-8)+(m+8(s+1))=2g-1+e$. By direct computation $e \not\in H(P_\infty)$ and the claim follows.

\item If $j=2$, then $\rho_{\ell+1}=4m+8m+8s=(9m-8)+(3m+8(s+1))=2g-1+e$. Writing $e=(1,0,s+1)$ we have that $e \in H(P_\infty)$, so this case cannot occur.

\item If $j=3$, then $\rho_{\ell+1}=6m+8m+8s=(9m-8)+(5m+8(s+1))=2g-1+e$. Writing $e=(1,1,s+1)$ we have that $e \in H(P_\infty)$, so this case cannot occur.
\end{itemize}
\end{proof}

Since from Proposition \ref{rimarco} the Weierstrass semigroup $H(P_\infty)$ is symmetric, its conductor is $c=2g$; equivalently, its largest gap is $2g-1$. The following theorem shows that the exact value of $d_{ORD}(C_{\ell}(P_\infty))$ is known for $\rho_{\ell+1} \geq 4g$; see \cite[Proposition 4.2 (iii)]{CF}.

\begin{theorem} \label{fengrao}
Let $H(P)$ be a Weierstrass semigroup. Then $d_{ORD}(C_{\ell}(P)) \geq \ell+1-g$ and equality holds if $\rho_{\ell+1} \geq 4g$.
\end{theorem}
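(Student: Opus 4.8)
The plan is to deduce the statement from the order-bound facts already recorded in Section~\ref{Sec:Preliminaries_Curves}: for every Weierstrass semigroup one has $d_{ORD}(C_\ell(P))\ge\ell+1-g$, and equality holds as soon as $\ell\ge 2c-g-1$, where $c=\max\{m\in\Z:m-1\notin H(P)\}$ is the conductor of $H(P)$ (this is \cite[Theorem 5.24]{HLP}; see also \cite[Proposition 4.2 (iii)]{CF}). The first inequality is then immediate, so the real content is to check that the hypothesis $\rho_{\ell+1}\ge 4g$ forces $\ell\ge 2c-g-1$.

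First I would recall the elementary bound $c\le 2g$, valid for any numerical semigroup. If $\ell_1<\cdots<\ell_g$ are the gaps, then for each $i$ and each non-gap $x$ with $0<x<\ell_i$ the integer $\ell_i-x$ is again a gap lying in $(0,\ell_i)$, and distinct $x$ yield distinct $\ell_i-x$; hence $\{0,1,\dots,\ell_i\}$ contains at most $i$ non-gaps, so $\ell_i+1-i\le i$, i.e.\ $\ell_i\le 2i-1$. Taking $i=g$ gives $c-1=\ell_g\le 2g-1$, that is $c\le 2g$. Next I would turn the hypothesis into a lower bound on $\ell$: since $\rho_{\ell+1}\ge 4g\ge 2g\ge c$, all $g$ gaps of $H(P)$ lie in $\{0,1,\dots,\rho_{\ell+1}\}$, so the number of non-gaps in this interval equals $\rho_{\ell+1}+1-g$; on the other hand these non-gaps are precisely $\rho_1=0,\rho_2,\dots,\rho_{\ell+1}$, which number $\ell+1$. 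Comparing, $\ell=\rho_{\ell+1}-g\ge 3g$, and since $c\le 2g$ we get $\ell\ge 3g\ge 3g-1\ge 2c-g-1$. Thus \cite[Theorem 5.24]{HLP} applies and gives $d_{ORD}(C_\ell(P))=\ell+1-g$.

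I do not anticipate a genuine obstacle here: the whole argument is bookkeeping about numerical semigroups. The only steps requiring a little care are the standard inequality $c\le 2g$ — for which the gap-reflection count above is the most economical route — and the identity $\ell=\rho_{\ell+1}-g$, which holds exactly because $\rho_{\ell+1}$ has reached the conductor; with these in hand the conclusion is a direct substitution into the cited order-bound theorem. (One may also note that $\rho_{\ell+1}\ge 4g$ is in fact equivalent to $\ell\ge 3g$, but this refinement is not needed for the statement.)
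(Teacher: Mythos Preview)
Your argument is correct. You reduce the statement to the version of the order bound already recorded in Section~\ref{Sec:Preliminaries_Curves} (equality when $\ell\ge 2c-g-1$, from \cite[Theorem~5.24]{HLP}), and then verify $\rho_{\ell+1}\ge 4g\Rightarrow\ell\ge 2c-g-1$ via the standard inequality $c\le 2g$ and the counting identity $\ell=\rho_{\ell+1}-g$ once $\rho_{\ell+1}$ exceeds the conductor. Each step checks out; in particular, the gap-reflection argument for $\ell_i\le 2i-1$ is clean and gives exactly $c\le 2g$.

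As for the comparison: the paper does not prove Theorem~\ref{fengrao} at all---it simply quotes it from \cite[Proposition~4.2~(iii)]{CF}. Your approach is therefore more self-contained: you derive the result entirely from material already stated in the preliminaries (the form of \cite[Theorem~5.24]{HLP} phrased in terms of the conductor), together with an elementary fact about numerical semigroups. This has the advantage that the reader need not consult a second external reference, and it makes transparent why $4g$ is a sufficient threshold (indeed, as you remark, $\rho_{\ell+1}\ge 4g$ is equivalent to $\ell\ge 3g$, which comfortably exceeds $2c-g-1\le 3g-1$).
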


According to the results obtained in the previous sections,  Remark \ref{farran}, and Theorem \ref{fengrao}, to complete the computation of $d_{ORD}(C_\ell(P_\infty))$ for every $\rho_{\ell} \in H(P_\infty)$, only the case $\rho_{\ell} \in [2g,4g-1)$ with $\rho_{\ell+1}=(0,1,k)$ and $k \in [m,2m)$ has to be considered. 


\begin{proposition} \label{perquant} Let $\rho_\ell \in H(P_\infty)$ be such that $\rho_{\ell}>2g$ and $\rho_{\ell+1}=(0,1,k)<4g$ for $k \in [m,2m)$. Then $$d_{ORD}(C_{\ell}(P_\infty))=\begin{cases} \nu_{\ell+5}=8k-7m+13, \ \ \textrm{if} \ \ k < \frac{9m-11}{8}, \\ \nu_{\ell+3}=8k-7m+11, \ \ \textrm{if} \ \ \frac{9m-11}{8} \leq k < \frac{11m-9}{8}, \\ \nu_{\ell+1}=8k-7m+9, \ \ \textrm{if} \ \ k \geq \frac{11m-9}{8}.\end{cases} $$ \end{proposition}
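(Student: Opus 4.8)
The plan is to evaluate $d_{ORD}(C_\ell(P_\infty))=\min\{\nu_s:s\ge\ell\}$ directly, using the symmetry of $H(P_\infty)$ to reduce the minimum to a finite set. Put $N:=\rho_{\ell+1}=2m+8k$; the hypotheses $k\in[m,2m)$ and $\rho_{\ell+1}<4g$ give $m\le k\le 2m-2$ and $N\ge 10m$. Since $H(P_\infty)$ is symmetric (Proposition~\ref{rimarco}), its conductor is $2g=9m-7$ and every integer $\ge 2g$ is a non-gap; as $N>2g$ this forces $\rho_{\ell+1+t}=N+t$ for all $t\ge 0$, and counting the non-gaps $\le N$ gives $\ell=N-g$, hence $\ell+1-g=N+1-2g=8k-7m+8$. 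Combining Theorem~\ref{fengrao} with the trivial bound $\nu_s\ge d_{ORD}(C_s(P_\infty))$, I get $\nu_{\ell+t}\ge(\ell+t)+1-g=8k-7m+8+t$ for every $t\ge 0$. In particular $\nu_{\ell+t}>8k-7m+13$ once $t\ge 6$, $>8k-7m+11$ once $t\ge 4$, and $>8k-7m+9$ once $t\ge 2$, so it suffices to compute $\nu_\ell,\nu_{\ell+1},\dots,\nu_{\ell+5}$ and take the minimum.

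Next I would identify the triples. Because $q=2$ and $n\ge 5$ we have $2^n\equiv 0\pmod 8$, hence $3m\equiv 1$ and $m\equiv 3\pmod 8$; then $N+t\equiv 6+t\pmod 8$, and this residue determines $3i+2j$ in the unique (Remark~\ref{scrittura}) representation of $N+t$ as a non-gap. One finds $\rho_{\ell+2}=(1,1,k-\tfrac{3m-1}{8})$, $\rho_{\ell+3}=(0,0,k+\tfrac{m+1}{4})$, $\rho_{\ell+4}=(1,0,k-\tfrac{m-3}{8})$, $\rho_{\ell+5}=(0,3,k-\tfrac{m-1}{2})$, $\rho_{\ell+6}=(1,3,k-\tfrac{7m-5}{8})$, where every displayed shift is a non-negative integer (using $m\equiv 3\pmod 8$ and $k\ge m$). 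Feeding each triple into Lemmas~\ref{i0} and~\ref{i1}, with the sub-case chosen according to whether the third coordinate is $<m$, lies in $[m,2m)$, or is $\ge 2m$, yields $\nu_\ell=7k-5m+7$, the \emph{unconditional} value $\nu_{\ell+5}=8k-7m+13$ (for $j=3$ the two sub-cases of Lemma~\ref{i1} coincide), and the piecewise values $\nu_{\ell+1}=8k-7m+9$ if $k\ge\tfrac{11m-1}{8}$ and $\nu_{\ell+1}=4(k-\tfrac{3m-1}{8}+1)$ otherwise, $\nu_{\ell+3}=8k-7m+11$ if $k\ge\tfrac{9m-3}{8}$ and $\nu_{\ell+3}=2(k-\tfrac{m-3}{8}+1)$ otherwise, plus analogous formulas for $\nu_{\ell+2}$ and $\nu_{\ell+4}$.

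To finish I would stitch the branches together. Comparing the two branches of $\nu_{\ell+3}$, one checks $2(k-\tfrac{m-3}{8}+1)\ge 8k-7m+11$ precisely for $k\ge\tfrac{9m-11}{8}$, with equality at the one integer $k=\tfrac{9m-11}{8}$ of the transition window, so $\nu_{\ell+3}=8k-7m+11$ for \emph{all} $k\ge\tfrac{9m-11}{8}$ and $\nu_{\ell+3}>8k-7m+11$ below; likewise $\nu_{\ell+1}=8k-7m+9$ for all $k\ge\tfrac{11m-9}{8}$ and is strictly larger below. The three cases then follow: if $k\ge\tfrac{11m-9}{8}$, then $\nu_{\ell+1}=8k-7m+9\le 7k-5m+7=\nu_\ell$ (as $k\le 2m-2$) and $\nu_{\ell+1}<\nu_{\ell+t}$ for $t\ge 2$, so $d_{ORD}(C_\ell(P_\infty))=8k-7m+9$; if $\tfrac{9m-11}{8}\le k<\tfrac{11m-9}{8}$, then $\nu_{\ell+3}=8k-7m+11$, while $\nu_\ell,\nu_{\ell+1},\nu_{\ell+2}\ge 8k-7m+11$ on this window and $\nu_{\ell+t}>8k-7m+11$ for $t\ge 4$, so $d_{ORD}(C_\ell(P_\infty))=8k-7m+11$; if $k<\tfrac{9m-11}{8}$, then $\nu_{\ell+5}=8k-7m+13$, while $\nu_\ell,\dots,\nu_{\ell+4}\ge 8k-7m+13$ on this window and $\nu_{\ell+t}>8k-7m+13$ for $t\ge 6$, so $d_{ORD}(C_\ell(P_\infty))=8k-7m+13$.

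The reductions and the triple identifications are routine; the main obstacle is the stitching step. Each of $\nu_{\ell+1},\dots,\nu_{\ell+4}$ is piecewise linear in $k$ with breakpoints of the shape $\tfrac{am+b}{8}$, and one must verify that on each of the three $k$-windows the designated quantity ($\nu_{\ell+1}$, $\nu_{\ell+3}$ or $\nu_{\ell+5}$) is the pointwise minimum of all six: in particular that the ``small $k$'' branches of $\nu_{\ell+1}$ and $\nu_{\ell+3}$ never fall below the claimed answer, and that $\nu_{\ell+2}$ does not enter its $k\ge 2m$ regime (where it equals $8k-7m+10$) until the windows have ended. Each of these inequalities is sharp exactly at an integer endpoint of a window, which is where the congruence $m\equiv 3\pmod 8$ must be invoked to resolve the floors and ceilings.
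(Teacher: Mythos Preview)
Your proposal is correct and follows essentially the same route as the paper: both identify the triples for $\rho_{\ell+2},\rho_{\ell+4},\rho_{\ell+6}$ (i.e.\ $(1,1,k-\tfrac{3m-1}{8})$, $(1,0,k-\tfrac{m-3}{8})$, $(1,3,k-\tfrac{7m-5}{8})$) and plug them into Lemmas~\ref{i1} and~\ref{i0} to obtain the three values $8k-7m+9$, $8k-7m+11$, $8k-7m+13$. The paper disposes of the minimization step with the phrase ``arguing as in the previous propositions'', whereas you make it explicit via the lower bound $\nu_{\ell+t}\ge(\ell+t)+1-g$ from Theorem~\ref{fengrao} (truncating the infinite minimum to $\nu_\ell,\dots,\nu_{\ell+5}$) and then compare these six values directly; this is a clean way to organise exactly the same computation.
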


\begin{proof} Arguing as in the previous propositions one can prove that the value of $d_{ORD}(C_{\ell}(P_\infty))$ is obtained by $\nu_{\ell+5}$, $\nu_{\ell+3}$, and $\nu_{\ell+1}$, if $k < \frac{9m-11}{8}$, $\frac{9m-11}{8} \leq k < \frac{11m-9}{8}$, and $k \leq \frac{11m-9}{8}$ respectively. Since $\rho_{\ell+1} \geq 2g$, we have that $\rho_{\ell+t}=\rho_{\ell+1}+(t-1)$ for every $t \geq 1$. 

Assume that $k<\frac{9m-11}{8}$. By direct checking $\rho_{\ell+6}=\rho_{\ell+1}+5=(1,3,\tilde k)$, where $\tilde k =k-\frac{7m-5}{8}$. Hence from Lemma \ref{i1},  $d_{ORD}(C_{\ell}(P_\infty))=8(k-\frac{7m-5}{8}+1)=8k-7m+13$, as $\tilde k < \frac{9m-11}{8}- \frac{7m-5}{8}<m$. 

Assume that $\frac{9m-11}{8} \leq k < \frac{11m-9}{8}$. By direct checking $\rho_{\ell+4}=\rho_{\ell+1}+3=(1,0,\tilde k)$ where $\tilde k =k-\frac{m-3}{8}$. Hence $\tilde k \geq m-1$ and from Lemma \ref{i1}, $d_{ORD}(C_{\ell}(P_\infty))=2m=8k-7m+11$ if $\tilde k =m-1$, while $d_{ORD}(C_{\ell}(P_\infty))=2(\tilde k+1)+6(\tilde k -m+1)=8k-7m+11$ if $\tilde k \geq m$. 

Assume that $k \geq \frac{11m-9}{8}$. By direct checking $\rho_{\ell+2}=\rho_{\ell+1}+1=(1,1,\tilde k)$ where $\tilde k =k-\frac{3m-1}{8}$. Hence $\tilde k \geq m-1$ and from Lemma \ref{i1}, $d_{ORD}(C_{\ell}(P_\infty))=4m=8k-7m+9$ if $\tilde k =m-1$, while $d_{ORD}(C_{\ell}(P_\infty))=4(\tilde k+1)+4(\tilde k -m+1)=8k-7m+9$ if $\tilde k \geq m$.\end{proof}

For $q\ne2$, we cannot determine $d_{ORD}(C_{\ell}(P_\infty)$ for all $\ell$. Yet, this is possible for certain $\ell$, as shown in the following propositions.

\begin{proposition}
If $\rho_{\ell+1}\leq (q-1)(q^n+1)$,
then
$$ d_{ORD}(C_{\ell}(P_\infty)) = j+1, $$
where $j\leq q-1$ satisfies $(j-1)(q^n+1) < \rho_{\ell+1} \leq j(q^n+1)$.
\end{proposition}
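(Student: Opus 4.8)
The plan is to compute $\nu_\ell$ directly in the regime $\rho_{\ell+1}\le(q-1)(q^n+1)$ and show that the minimum of $\nu_m$ over $m\ge\ell$ is attained at $m=\ell$ (equivalently, that $\nu$ is essentially non-decreasing on the relevant range), so that $d_{ORD}(C_\ell(P_\infty))=\nu_\ell=j+1$. Recall that $H(P_\infty)=\langle q^3,\,mq,\,q^n+1\rangle$ with $m=(q^n+1)/(q+1)$. For a non-gap $\rho\le(q-1)(q^n+1)$, the generator $q^n+1$ can enter any representation $\rho=\sum$ (with non-negative coefficients) with coefficient at most $q-1$; moreover since $q^3<mq<q^n+1$ and since $(q-1)(q^n+1)<q^n\cdot q = q^{n+1}$ while $mq=q^{n+1}/(q+1)+\cdots$ is of smaller order for the relevant $\rho$, the coefficients of $q^3$ and $mq$ in any such representation are bounded. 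The key arithmetic fact to establish is: if $(j-1)(q^n+1)<\rho_{\ell+1}\le j(q^n+1)$ with $j\le q-1$, then in every way of writing $\rho_{\ell+1}=a(q^n+1)+(\text{something in }\langle q^3,mq\rangle)$ the coefficient $a$ of $q^n+1$ ranges exactly over $\{0,1,\dots,j\}$ (or $\{j-1,j\}$ — whichever the precise count turns out to be), and for each admissible $a$ there is a \emph{unique} way to complete the representation; hence $\nu_\ell=j+1$.

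First I would make precise the structure of $H(P_\infty)$ restricted to the interval $[0,(q-1)(q^n+1)]$: I would show that every element there is uniquely of the form $a(q^n+1)+\lambda$ with $0\le a\le q-1$ and $\lambda\in\langle q^3,mq\rangle$ lying below $q^n+1$, and that within $\langle q^3,mq\rangle\cap[0,q^n+1)$ elements have unique representations (this is a 2-generator numerical-semigroup statement, controlled by $\gcd(q^3,mq)=q$ and the Frobenius number of $\langle q^2,m\rangle$, exactly as in Remark~\ref{scrittura} and Proposition~\ref{rimarco}; the telescopic structure gives uniqueness of the normal form). Then, for $\rho_{\ell+1}$ with $(j-1)(q^n+1)<\rho_{\ell+1}\le j(q^n+1)$, I would count: writing $\rho_{\ell+1}=\rho_i+\rho_r$ and summing the $(q^n+1)$-coefficients, the two coefficients must add to some $a'\le 2(q-1)$, but the constraint $\rho_i,\rho_r\le\rho_{\ell+1}$ together with the size bound forces the total $(q^n+1)$-content of the pair to equal either $j$ or $j-1$ (the "carry" from the $\langle q^3,mq\rangle$ part can contribute at most one extra copy of $q^n+1$ when $\lambda_1+\lambda_2\ge q^n+1$). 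A careful bookkeeping of these at-most-one-carry cases yields exactly $j+1$ ordered pairs, giving $\nu_\ell=j+1$.

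Next I would verify the minimality, i.e.\ that $\nu_m\ge j+1$ for all $m\ge\ell$ with $\rho_{m+1}$ still in (or just past) the range, and that once $\rho_{m+1}$ crosses the next threshold $j(q^n+1)$ the value $\nu_m$ jumps to $\ge j+2$; this is the monotonicity of the counting function as the $(q^n+1)$-content increases, which follows from the same normal-form analysis since adding one to the $(q^n+1)$-content of $\rho_{\ell+1}$ adds one new admissible value of $a$ without destroying the old ones. Combined with the general lower bound $d_{ORD}(C_\ell(P_\infty))=\min\{\nu_m:m\ge\ell\}$ and the fact established for larger $\rho_\ell$ (Theorem~\ref{fengrao} and the symmetry of $H(P_\infty)$) that $\nu$ does not dip back down, this pins the minimum at $\nu_\ell$.

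\textbf{Main obstacle.} The delicate point is the "carry" analysis: when decomposing $\rho_{\ell+1}=\rho_i+\rho_r$, the two $\langle q^3,mq\rangle$-parts $\lambda_1,\lambda_2$ may sum to something $\ge q^n+1$, effectively transferring a copy of $q^n+1$ from the "$\lambda$-budget" into the "$a$-budget", and one must show this happens in precisely the right number of ways and does not produce extra or fewer pairs than $j+1$. Getting the edge cases right — especially near the endpoints $\rho_{\ell+1}=j(q^n+1)$ and $\rho_{\ell+1}$ just above $(j-1)(q^n+1)$, and the interaction with the Frobenius number of $\langle q^2,m\rangle$ which governs when small values of $\lambda$ fail to be non-gaps — is where the real work lies; for $q=2$ this is exactly the content of Lemmas~\ref{i1}–\ref{i0}, and the present proposition is the part of that analysis that survives uniformly in $q$ because the $(q^n+1)$-coefficient is capped at $q-1$ and the telescopic normal form stays unique.
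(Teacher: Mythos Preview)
Your plan rests on two claims that are both false: that $\nu_\ell = j+1$, and that the minimum $\min\{\nu_m : m \ge \ell\}$ is attained at $m=\ell$. A concrete counterexample already at $q=2$, $n=5$: take $\rho_{\ell+1}=16$. Here $j=1$ since $0<16\le 33=(q-1)(q^n+1)$, so the proposition asserts $d_{ORD}=2$. But the non-gaps up to $16$ are $0,8,16$, and the ordered pairs summing to $16$ are $(0,16),(8,8),(16,0)$, so $\nu_\ell=3\ne 2$; the value $2$ is reached only later, at $\rho_{m+1}=33$, where the sole pairs are $(0,33)$ and $(33,0)$. So $\nu$ is \emph{not} non-decreasing on this range, and locating the minimiser is exactly the non-trivial content. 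There is also a conflation in your argument: you slide from ``number of representations $\rho_{\ell+1}=a(q^n+1)+\lambda$ with $\lambda\in\langle q^3,mq\rangle$'' to $\nu_\ell$, but these are different counts. For $\rho_{\ell+1}=16$ the first count is $1$ (only $a=0$), the second is $3$, and neither equals $j+1=2$.

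The paper's proof takes an entirely different route: having shown (Proposition~\ref{rimarco}) that $H(P_\infty)=\langle q^3,mq,q^n+1\rangle$ is telescopic with $d_2=q$, $d_3=1$, it simply invokes \cite[Theorem~6.11]{KP}, a general result computing $d_{ORD}$ for telescopic semigroups in the initial range $\rho_{\ell+1}\le (d_{k-1}/d_k-1)a_k$; here that range is exactly $\rho_{\ell+1}\le(q-1)(q^n+1)$, and the hypothesis $q^n+1=\max\{q^3,mq,q^n+1\}$ is what is needed. If you want a direct argument, the honest shape is: (i) show $\nu_m=j+1$ at the specific point $\rho_{m+1}=j(q^n+1)$ (upper bound on the minimum), and (ii) show $\nu_m\ge j+1$ for \emph{every} $\rho_{m+1}>(j-1)(q^n+1)$ by exhibiting $j+1$ explicit pairs, e.g.\ $(i(q^n+1),\,\rho_{m+1}-i(q^n+1))$ for $0\le i\le j$ together with a proof that each $\rho_{m+1}-i(q^n+1)$ is a non-gap. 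Step~(ii) is where the real work lies and is what the Kirfel--Pellikaan theorem packages; your carry analysis does not address it.
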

\begin{proof}
Since $H(P_\infty)$ is telescopic from Proposition \ref{rimarco}, we can apply \cite[Theorem 6.11]{KP}. The claim then follows because $q^n+1=\max\{\frac{q^3}{1},\frac{mq}{1},\frac{q^n+1}{1}\}$.
\end{proof}

\begin{proposition}
If $\frac{3}{2}(q-1)(q^{n+1}+\frac{1}{3}q^n-q^2-\frac{2}{3})-2<\ell\leq \frac{3}{2}(q-1)(q^{n+1}+q^n-q^2)-2$,
then
$$ d_{ORD}(C_{\ell}(P_\infty)) = \min\{\rho_t\mid \rho_t\geq \ell+1-g\}. $$
\end{proposition}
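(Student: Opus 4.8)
The plan is to reduce the statement to an application of the Campillo–Farrán order-bound machinery in the ``large $\ell$'' regime. Recall from Proposition~\ref{rimarco} that $H(P_\infty)$ is telescopic, hence symmetric, so its conductor is exactly $c=2g$ and its largest gap is $2g-1$. By the order-bound estimate quoted after Proposition~\ref{campillo} (from \cite[Theorem 5.24]{HLP}), one has $d_{ORD}(C_\ell(P_\infty))\geq \ell+1-g$ always, with equality as soon as $\ell\geq 2c-g-1=4g-g-1=3g-1$. So the first step is to rewrite the hypothesis on $\ell$ in terms of $g$: since $g=\tfrac{1}{2}(q-1)(q^{n+1}+q^n-q^2)$, the upper endpoint $\tfrac{3}{2}(q-1)(q^{n+1}+q^n-q^2)-2$ is precisely $3g-2$, i.e. $\ell\leq 3g-2$, equivalently $\rho_{\ell+1}\leq 3g-1$; and I will check that the lower endpoint $\tfrac{3}{2}(q-1)(q^{n+1}+\tfrac13 q^n-q^2-\tfrac23)-2$ is at least $3g-1-1$ up to the telescopic-semigroup bookkeeping, so that in the stated range we are exactly in the regime where $d_{ORD}(C_\ell(P_\infty))=\ell+1-g$ need not yet hold but is close.

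Next I would invoke the symmetry of $H(P_\infty)$ together with Proposition~\ref{campillo}: whenever $\rho_{\ell+1}=2g-1+e$ with $e\in H(P_\infty)\setminus\{0\}$ we get directly $d_{ORD}(C_\ell(P_\infty))=\nu_\ell$. Combining this with the general lower bound $d_{ORD}\geq \ell+1-g$ and the monotonicity-type argument behind the order bound, the value $d_{ORD}(C_\ell(P_\infty))$ in the stated window is forced to equal $\min\{\nu_m : m\geq \ell\}$, and on the other hand, for $\rho_{\ell+1}$ in this range one shows that this minimum is attained at the smallest index $t$ with $\rho_t\geq \ell+1-g$; concretely, $\nu_t=\rho_t$ holds for all $\rho_t$ beyond the conductor (a standard fact for symmetric semigroups: for $\rho_t\geq 2g$, $\nu_t$ counts all the ways $\rho_t=\rho_i+\rho_j$, and symmetry plus $\rho_t\geq 2g$ makes this count equal to the number of non-gaps $\leq\rho_t$ that are ``reachable'', which stabilises to $\rho_t-g+1$; the point is that in our window $\ell+1-g$ already lands in $H(P_\infty)$). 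Hence $d_{ORD}(C_\ell(P_\infty))=\min\{\rho_t\mid\rho_t\geq\ell+1-g\}$, which is the claim.

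More precisely, the key steps in order are: (i) translate the two endpoints into $3g-2$ and (essentially) $3g-1-$(small correction), using the explicit formula for $g$; (ii) record that $c=2g$ by Proposition~\ref{rimarco}, so the threshold $2c-g-1=3g-1$ is the relevant one for the sharp order bound; (iii) apply Proposition~\ref{campillo} on the subset of $\ell$ with $\rho_{\ell+1}\equiv 2g-1\pmod{\text{non-gaps}}$ and use the general order-bound inequality on the rest, to conclude $d_{ORD}(C_\ell(P_\infty))=\min\{\nu_m:m\geq\ell\}$ throughout the window; (iv) show $\nu_m\geq \rho_m-g+1$ with equality for $\rho_m$ just past $\ell+1-g$, and that $\rho_m\mapsto\nu_m$ is ``increasing enough'' on the window that the minimum over $m\geq\ell$ is realised at the first $m$ with $\rho_m\geq\ell+1-g$; (v) identify that minimum value as $\min\{\rho_t\mid\rho_t\geq\ell+1-g\}$.

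The main obstacle I expect is step (iv): one must verify that in the half-open interval $\rho_{\ell+1}\in[\,\text{lower endpoint},\,3g-1)$ the Feng–Rao numbers $\nu_m$ do not dip below the candidate value $\min\{\rho_t\mid\rho_t\geq\ell+1-g\}$ for any $m\geq\ell$ — i.e. ruling out a ``spurious'' small $\nu_m$ at some larger index. This is exactly the kind of telescopic-semigroup estimate underlying \cite[Proposition 4.2(iii)]{CF} and \cite[Theorem 6.11]{KP}: for a telescopic semigroup the function $\ell\mapsto d_{ORD}$ is well understood beyond $c$, and the only delicate point is handling the transitional range $[c, 2c)=[2g,4g)$ where the sharp equality $d_{ORD}=\ell+1-g$ has not yet kicked in. I would handle it by the same case analysis on the triple $(i,j,k)$ used in Propositions~\ref{farran} and \ref{perquant}: write $\rho_{\ell+1}=(i,j,k)$, note $\rho_{\ell+1}\geq 2g$ forces $\rho_{\ell+t}=\rho_{\ell+1}+(t-1)$, and track how the minimum of $\nu_m$ is achieved, checking that it equals $\min\{\rho_t\mid\rho_t\geq\ell+1-g\}$ in every case. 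The arithmetic is routine once the framework is set up; the conceptual content is entirely in (ii)–(iii) plus the telescopic structure from Proposition~\ref{rimarco}.
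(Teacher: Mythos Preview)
The paper's proof is a single sentence: it cites \cite[Theorem 6.10]{KP}, a general result for telescopic semigroups, and nothing more. Since $H(P_\infty)$ is telescopic by Proposition~\ref{rimarco}, the hypotheses of that theorem are met, and the bounds on $\ell$ in the statement are exactly the specialization of the Kirfel--Pellikaan range to the generators $q^3, mq, q^n+1$.

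Your plan, by contrast, attempts to rebuild this from scratch and has concrete problems. First, your reading of the lower endpoint is wrong: a short computation gives
\[
\tfrac{3}{2}(q-1)\bigl(q^{n+1}+\tfrac13 q^n-q^2-\tfrac23\bigr)-2 \;=\; 3g-(q-1)(q^n+1)-2,
\]
so the window has width $(q-1)(q^n+1)$, not something ``close to $3g-2$''. (This is no accident: $q^n+1$ is the largest telescopic generator and $q-1$ is its maximal coefficient in the unique telescopic expansion.) Second, your proposed case analysis on the triple $(i,j,k)$, imported from Propositions~\ref{farran} and \ref{perquant}, is specific to $q=2$; the present proposition is stated for arbitrary $q$, so that machinery is unavailable. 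Third, your step (iv) --- ruling out a small $\nu_m$ somewhere above $\ell$ --- is precisely the content of the theorem you need, and you do not supply an argument for it; saying ``the arithmetic is routine once the framework is set up'' is not a proof, and in fact this is exactly what \cite[Theorem 6.10]{KP} establishes for telescopic semigroups in general.

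In short: the statement is a direct corollary of a known theorem on telescopic semigroups, and the intended proof is to cite it. Your reconstruction misidentifies the interval, relies on $q=2$ tools for a general-$q$ claim, and leaves the essential step unproved.
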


\begin{proof}
This is the claim of \cite[Theorem 6.10]{KP}.
\end{proof}

\subsection{Application for $q=2$: families of AG codes with relative Singleton defect going to zero}\label{Sec:Application1}

In this section, we assume that $q=2$ and provide two families of codes of type $C_{\ell}(P_\infty)$ in the cases $\rho_\ell=9m$ and $\rho_\ell=9m+8$, with relative Singleton defect going to zero as $n$ goes to infinity.
We denote by $\delta$ and $\Delta$ the Singleton defect and the relative Singleton defect of $C_\ell(P_\infty)$, respectively.
\



\begin{lemma}
Fix $n\geq5$ odd. Then $9m-1, \ 9m, \ 9m+1 \in H(P_\infty)$.
\end{lemma}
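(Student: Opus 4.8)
The statement asks to show that the three consecutive integers $9m-1$, $9m$, $9m+1$ all lie in $H(P_\infty)$, where $m = \frac{2^n+1}{3}$ and $q=2$. The plan is to use the explicit description
$$H(P_\infty)= \bigg\{i(2^n+1)+2j\,m+8k \ \big|\ i\in\{0,1\},\ j\in\{0,1,2,3\},\ k\geq 0\bigg\}$$
together with the equivalent generator description $H(P_\infty)=\langle q^3, mq, q^n+1\rangle = \langle 8, 2m, 3m\rangle$ (recalling $q^n+1 = 3m$ and noting $mq = 2m$, $q^3 = 8$). The idea is simply to exhibit each of $9m-1$, $9m$, $9m+1$ explicitly in the form $i(2^n+1)+2jm+8k = 3im + 2jm + 8k$ with admissible parameters. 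Since $2g-2 = 9m-10$ and the conductor is $c=2g=9m-8$ (by Proposition~\ref{rimarco}, $H(P_\infty)$ is symmetric so every integer $\geq 9m-8$ is a non-gap), the values $9m-1, 9m, 9m+1$ are comfortably above the conductor; hence they automatically belong to $H(P_\infty)$. That is the cleanest argument: it is a one-line consequence of symmetry.

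\textbf{Alternative explicit verification.} If instead one wants a self-contained elementary check not invoking symmetry, I would argue as follows. Write $9m = 3m + 6m + 0 = (2^n+1) + 2\cdot 3\cdot m + 0$, i.e. the triple $(i,j,k)=(1,3,0)$; so $9m\in H(P_\infty)$. For $9m+1$: since $1 = 9m+1 - 9m$ and $1\notin\langle 8,2m,3m\rangle$ directly, one instead writes $9m+1$ differently. Note $2m\cdot 1 + 3m\cdot ? $ won't give remainder control modulo $8$ easily, so the practical route is: every residue class mod $8$ in $[c, c+7]$ is hit, and $9m+1 - (9m-8) = 9 > 0$; more concretely, one checks that $9m-8, 9m-7,\dots$ are all non-gaps because the gap set of a symmetric semigroup with conductor $2g$ is exactly the complement of $[2g,\infty)$ intersected with the non-gaps below. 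The honest shortcut is the symmetry argument above.

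\textbf{Main obstacle.} There is essentially no obstacle here: the lemma is a bookkeeping statement whose only content is that $9m-1 \geq 2g = 9m-8$. The mild care needed is to confirm $2g = 9m-8$ from the genus formula $g = \frac12(q-1)(q^{n+1}+q^n-q^2)$ with $q=2$, which gives $g = \frac12(2^{n+1}+2^n-4) = 2^n + 2^{n-1} - 2$, and since $m = \frac{2^n+1}{3}$ one has $9m = 3(2^n+1) = 3\cdot 2^n + 3$, hence $2g = 3\cdot 2^n - 4 = 9m - 7$; a quick recheck of the arithmetic ($2g = 9m - 7$ or $9m-8$?) is the only place one can slip. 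Either way $9m-1 > 2g$, so the three integers exceed the conductor and the lemma follows immediately from the fact (already recorded in the excerpt) that the conductor of the symmetric semigroup $H(P_\infty)$ is $c = 2g$, i.e. every integer $\geq 2g$ is a non-gap. I would therefore write the proof as: "By Proposition~\ref{rimarco}, $H(P_\infty)$ is symmetric, so its conductor equals $2g$; since $2g < 9m-1$, the integers $9m-1,9m,9m+1$ all belong to $H(P_\infty)$." If the referee prefers concreteness, I would append the explicit triples, e.g. $9m = (1,3,0)$ and the analogous representations of $9m\pm1$ obtained by the standard telescopic-semigroup normal form.
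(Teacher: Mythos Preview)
Your argument is correct, and it takes a genuinely different route from the paper. The paper simply exhibits explicit parameter triples:
\[
9m-1=(0,3,2^{n-3}),\qquad 9m=(1,3,0),\qquad 9m+1=\Big(0,2,\tfrac{5\cdot 2^{n-3}+1}{3}\Big),
\]
and checks each one by hand. You instead invoke Proposition~\ref{rimarco}: since $H(P_\infty)$ is telescopic, hence symmetric, its Frobenius number is $2g-1$ and every integer $\geq 2g$ is a non-gap; as $2g=2^{n+1}+2^n-4=9m-7$ (your arithmetic hesitation here resolves to $9m-7$, matching the value $2g-1=9m-8$ used elsewhere in the paper), the three integers $9m-1,9m,9m+1$ all exceed the conductor and are automatically non-gaps.

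Your approach is shorter and more conceptual, and it makes transparent why the lemma holds (the values sit above the conductor). The paper's explicit triples, on the other hand, have the advantage of being self-contained and of feeding directly into the subsequent computations of $\nu_\ell$ and $d_{ORD}$, which require knowing exactly which $(i,j,k)$ corresponds to $\rho_{\ell+1}$. In particular the triple $9m=(1,3,0)$ you do write down matches the paper, and the paper needs the triple for $9m+1$ in the very next corollary. So if you adopt the symmetry argument, you should still record the explicit representations afterward; otherwise the downstream applications (e.g.\ the computation of $\nu_\ell$ for $\rho_\ell=9m$) would be left without the data they need.
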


\begin{proof}
A direct computation shows that $9m-1=(0,3,\frac{2^n}{8})$, $9m=(1,3,0)$, and $9m+1=(0,2,\frac{5\cdot2^{n-3}+1 }{3})$, thus the claim follows.
\end{proof}

We now assume that $\rho_\ell=9m$. Since $\rho_{\ell+1}=9m+1=(0,2,\frac{5\cdot2^{n-3}+1 }{3})$ the following result follows from Lemma 1.3.
\begin{corollary} Assume that $\rho_\ell=(1,3,0)$. Then $\nu_\ell=3 \cdot \big(\frac{5 \cdot 2^{n-3}+1}{3}+1 \big) \geq 24$.
\end{corollary}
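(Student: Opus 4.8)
The statement to be proven is the Corollary asserting that, when $\rho_\ell=(1,3,0)=9m$, one has $\nu_\ell=3\big(\tfrac{5\cdot 2^{n-3}+1}{3}+1\big)\geq 24$. The whole point is that $\nu_\ell$ is, by definition, the Feng--Rao function evaluated at the successor $\rho_{\ell+1}$ of $\rho_\ell$ in $H(P_\infty)$, so the plan is first to identify $\rho_{\ell+1}$ explicitly, then to read off $\nu_\ell$ from the formula in Lemma~\ref{i0}, and finally to check the numerical lower bound.

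\textbf{Step 1: identify $\rho_{\ell+1}$.} Since the Lemma preceding this Corollary already establishes that $9m-1,\,9m,\,9m+1\in H(P_\infty)$ with $9m=(1,3,0)$ and $9m+1=(0,2,\tfrac{5\cdot 2^{n-3}+1}{3})$, and since no integer strictly between $9m$ and $9m+1$ exists, I would simply invoke that Lemma to conclude $\rho_{\ell+1}=9m+1=(0,2,k)$ with $k=\tfrac{5\cdot 2^{n-3}+1}{3}$. (A small sanity check: $5\cdot 2^{n-3}+1\equiv 5\cdot(-1)^{n-3}+1\equiv 5\cdot(-1)+1\equiv 0\pmod 3$ for $n$ odd with $n\geq 5$, so $k$ is indeed a positive integer.)

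\textbf{Step 2: apply Lemma~\ref{i0}.} Here $\rho_{\ell+1}=(0,j,k)$ with $j=2$. I need to determine which branch of the piecewise formula in Lemma~\ref{i0} applies, i.e.\ whether $k<m$, $m\leq k<2m$, or $k\geq 2m$. Since $k=\tfrac{5\cdot 2^{n-3}+1}{3}=\tfrac{5}{8}\cdot\tfrac{2^n}{3}+\tfrac{1}{3}$ and $m=\tfrac{2^n+1}{3}$, one has $k<m$ (roughly $k\approx\tfrac58 m$), so the first branch applies and $\nu_\ell=(j+1)(k+1)+\lfloor j/3\rfloor(k+1)$. With $j=2$ this gives $\lfloor 2/3\rfloor=0$, hence $\nu_\ell=3(k+1)=3\big(\tfrac{5\cdot 2^{n-3}+1}{3}+1\big)$, which is exactly the claimed value.

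\textbf{Step 3: the numerical bound.} For the inequality $\nu_\ell\geq 24$, it suffices to observe that $n\geq 5$ forces $2^{n-3}\geq 4$, hence $k=\tfrac{5\cdot 2^{n-3}+1}{3}\geq \tfrac{21}{3}=7$, so $\nu_\ell=3(k+1)\geq 3\cdot 8=24$, with equality exactly at $n=5$. I do not anticipate any real obstacle here: the only thing that requires any care is confirming the branch selection in Step~2, i.e.\ that $k<m$ (which is immediate from $5\cdot 2^{n-3}<2^n$) and that $k$ is a genuine nonnegative integer so that the triple $(0,2,k)$ is legitimate in the sense of Remark~\ref{scrittura}; both are routine. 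The substance of the Corollary is entirely carried by the preceding Lemma (identification of $\rho_{\ell+1}$) and by Lemma~\ref{i0} (evaluation of $\nu_\ell$), so the proof is a short deduction.
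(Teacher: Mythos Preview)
Your proposal is correct and follows essentially the same approach as the paper: the paper simply notes that $\rho_{\ell+1}=9m+1=(0,2,\tfrac{5\cdot 2^{n-3}+1}{3})$ and invokes the formula of Lemma~\ref{i0}, exactly as you do. Your additional checks (that $k$ is a positive integer, that $k<m$ so the first branch applies, and the verification of the bound $\geq 24$ at $n=5$) are all sound and make explicit what the paper leaves implicit.
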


\begin{proposition} \label{9m} The code $C_\ell(P_\infty)$ is an $[N,k,d]_{2^{2n}}$-linear code with
\begin{itemize}
\item $N=(3m-1)^2+(3m-1)(9m-7)$,
\item $k=N-\frac{9m+9}{2}$,
\item $d \geq d_{ORD}(C_\ell(P_\infty)) = 16$,
\item $\delta \leq N-k+1- d_{ORD}(C_\ell(P_\infty))=\frac{9m-21}{2}$,
\item $\Delta =\frac{\delta}{N} \leq \frac{9m-21}{2(3m-1)(12m-8)}$; hence, $\Delta$ goes to zero as $n$ goes to infinity.
\end{itemize}
\end{proposition}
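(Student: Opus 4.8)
The plan is to verify each of the five bullet points in Proposition \ref{9m} directly, since all the necessary machinery has already been assembled in the preceding sections.

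First I would compute the length $N$. Since $C_\ell(P_\infty) = C^\perp(\overline D, \rho_\ell P_\infty)$, the length equals $\deg \overline D = |GGS(q,n)(\mathbb F_{q^{2n}})| - 1$, the number of $\mathbb F_{q^{2n}}$-rational points other than $P_\infty$. For $q=2$, the GGS curve has genus $g = \frac12(q-1)(q^{n+1}+q^n-q^2) = \frac12(2^{n+1}+2^n-4) = \frac{9m-8}{2}$ (using $m = (2^n+1)/3$, so $2^n = 3m-1$, $2^{n+1} = 6m-2$, giving $2^{n+1}+2^n-4 = 9m-7$; wait, one must be careful: $6m-2+3m-1-4 = 9m-7$, so $g = \frac{9m-7}{2}$, not $\frac{9m-8}{2}$ — but the excerpt uses $2g-1 = 9m-8$, so $2g = 9m-7$ and $g=\frac{9m-7}{2}$, consistent). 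The number of rational points of an $\mathbb F_{q^{2n}}$-maximal curve is $q^{2n} + 1 + 2g q^n$; so $N = q^{2n} + 2gq^n = (3m-1)^2 + (9m-7)(3m-1)$, which is exactly the stated value $N = (3m-1)^2 + (3m-1)(9m-7)$. This confirms bullet one.

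Next, for the dimension: since $\rho_\ell = 9m > 2g = 9m-7$, the divisor $\rho_\ell P_\infty$ has degree exceeding $2g-2$, so by Riemann–Roch $\ell(\rho_\ell P_\infty) = \rho_\ell - g + 1$; one must also check $N > \deg(\rho_\ell P_\infty) = 9m$, which holds for $n\geq 5$. Then $k = k^\perp = N - \ell(\rho_\ell P_\infty) = N - (9m - \frac{9m-7}{2} + 1) = N - \frac{9m+9}{2}$, giving bullet two. For bullet three, I would invoke Proposition \ref{perquant} with $\rho_{\ell+1} = 9m+1 = (0,1,k')$ where $k' = \frac{5\cdot 2^{n-3}+1}{3}$; one checks $k' \in [m, 2m)$ and determines which of the three cases applies by comparing $k'$ with $\frac{9m-11}{8}$ and $\frac{11m-9}{8}$. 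Since $k' = \frac{5\cdot 2^{n-3}+1}{3}$ and $m = \frac{2^n+1}{3} = \frac{8\cdot 2^{n-3}+1}{3}$, we get $k'$ is roughly $\frac58 m$, which is less than $\frac{9m-11}{8} \approx \frac98 m$, so the first case $d_{ORD} = 8k'-7m+13$ applies. Substituting $8k' = \frac{40\cdot 2^{n-3}+8}{3}$ and $7m = \frac{56\cdot 2^{n-3}+7}{3}$ yields $8k'-7m = \frac{-16\cdot 2^{n-3}+1}{3} = \frac{-2^{n+1}+1}{3} = \frac{-(6m-2)+1}{3}\cdot\frac{?}$ — I would recompute: $-2^{n+1}+1 = -(6m-2)+1 \cdot$ no, $2^{n+1} = 6m-2$ so $-2^{n+1}+1 = -6m+3$, hence $8k'-7m = \frac{-6m+3}{3} = -2m+1$; therefore $d_{ORD} = -2m+1+13 = 14-2m$, which is negative — so I must have the wrong value of $k'$. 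Rechecking: $9m+1 = (0,2,k')$ per the Lemma (it says $9m+1 = (0,2,\frac{5\cdot 2^{n-3}+1}{3})$), not $(0,1,k')$; but Proposition \ref{perquant} only covers $(0,1,k)$. Since $\rho_{\ell+1} = 9m+1 = (0,2,\cdot)$ with the $k$-coordinate less than $m$ (as $\frac{5\cdot 2^{n-3}+1}{3} < \frac{8\cdot 2^{n-3}+1}{3} = m$), bullet three instead follows from Lemma \ref{dord7}: with $j=2$ and $k' < m$, and $\lceil \frac{3m}{8}\rceil \le k' \le \lfloor\frac{7m}{8}\rfloor - 2$ (since $k'\approx\frac58 m$), we are in the case $d_{ORD} = 8(\max\{0,\lceil k'-\frac{5m}{8}\rceil\}+1)$; one checks $k' - \frac{5m}{8} \le 1$, i.e. $\lceil k'-\frac{5m}{8}\rceil = 1$, giving $d_{ORD} = 16$.

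Finally, bullets four and five are immediate arithmetic consequences: the Singleton defect is $\delta = N - k + 1 - d \le N - k + 1 - d_{ORD} = \frac{9m+9}{2} + 1 - 16 = \frac{9m+11-32}{2} = \frac{9m-21}{2}$, and the relative Singleton defect is $\Delta = \delta/N \le \frac{9m-21}{2N} = \frac{9m-21}{2(3m-1)(12m-8)}$ using $N = (3m-1)(3m-1+9m-7) = (3m-1)(12m-8)$; since the numerator grows linearly in $m$ while the denominator grows quadratically, and $m\to\infty$ as $n\to\infty$, we get $\Delta\to 0$. The main obstacle is the bookkeeping in bullet three: correctly identifying which $(i,j,k)$-triple $\rho_{\ell+1} = 9m+1$ corresponds to, confirming its $k$-coordinate lies in the required subinterval of $[0,m)$, and then selecting the correct branch of Lemma \ref{dord7} — the arithmetic with floors, ceilings, and the substitution $2^n = 3m-1$ must be done carefully to land on exactly $16$ rather than an off-by-a-few value.
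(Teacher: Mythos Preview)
Your proposal is correct and follows essentially the same approach as the paper: the computations of $N$, $k$, $\delta$, and $\Delta$ are identical, and for bullet three you both appeal to Lemma~\ref{dord7} applied to $\rho_{\ell+1}=9m+1=(0,2,k')$ with $k'=\tfrac{5\cdot2^{n-3}+1}{3}$. The only tactical difference is that, rather than evaluating the formula $8(\max\{0,\lceil k'-\tfrac{5m}{8}\rceil\}+1)$ directly as you do (your check $k'-\tfrac{5m}{8}=\tfrac{1}{8}$ is correct), the paper confirms the value $16$ by exhibiting the witness $\rho_{s+1}=(1,3,1)=9m+8$ with $\nu_s=2\cdot4\cdot2=16$ via Lemma~\ref{i1}.
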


\begin{proof}
Since $GGS(q,n)$ is $\mathbb{F}_{2^{2n}}$-maximal, we have
$$N=(2^{2n}+1+2g(GGS(q,n))2^n)-1=2^{2n}+2^n(9m-7)=(3m-1)^2+(3m-1)(9m-7);$$
the last equality follows from $m=(2^n+1)/3$.
Since $C_\ell(P_\infty)=C^{\perp}(\overline D,\rho_\ell P_\infty)$, $k=N-\tilde{k}$ where $\tilde{k}$ is the dimension of $C(\overline D,\rho_\ell P_\infty)$. As $\deg(\rho_\ell P_\infty)>2g(GGS(q,n))-2$, from the Riemann-Roch Theorem follows
$$k=N-\tilde{k}=N-(\deg(\rho_\ell P_\infty)+1-g(GGS(q,n)))=N-\left(9m+1-\frac{9m-7}{2}\right)=r-\frac{9m+9}{2}.$$

By Lemma \ref{dord7}, $d_{ORD}(C_\ell(P_\infty))\geq 16$. To prove the claim is sufficient to show that there exists $\rho_s \geq \rho_\ell$ such that $\nu_s=16$. To this end we take $\rho_{s+1}=(1,3,1)=9m+8>9m+1$. From Lemma 1.1, $\nu_s=2(b+1)(c+1)=2(3+1)(1+1)=16$ and the claim follows. Now the claim on $\delta$ and $\Delta$ follows by direct computation.
\end{proof}

We now assume that $\rho_\ell=9m+8$, so that $\rho_{\ell+1}=9m+9=(0,2,\frac{5\cdot2^{n-3}+1}{3}+1)=(0,2,\frac{5m+9}{8})$. 
Arguing as in the proof of Proposition \ref{9m} and using Lemma \ref{dord7}, the following result is obtained.

\begin{proposition} The code $C_\ell(P_\infty)$ is an $[r,k,d]_{2^{2n}}$-linear code with
\begin{itemize}
\item $r=(3m-1)^2+(3m-1)(9m-7)$,
\item $k=r-\frac{9m+25}{2}$,
\item $d \geq d_{ORD}(C_\ell(P_\infty)) = 2 \big(\lceil \frac{6m+9}{8} \rceil +1 \big)$,
\item $\delta \leq r-k+1- d_{ORD}(C_\ell(P_\infty))=\frac{9m+25}{2} -2 \big(\lceil \frac{6m+9}{8} \rceil +1 \big) <\frac{6m+21}{2}$,
\item $\Delta =\frac{\delta}{r}
< \frac{6m+21}{2(3m-1)(12m-8)}$; hence, $\Delta$ goes to zero as $n$ goes to infinity.
\end{itemize}
\end{proposition}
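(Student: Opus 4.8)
The plan is to follow the proof of Proposition~\ref{9m} verbatim, with the non-gap $9m$ replaced by $9m+8$. Since $C_\ell(P_\infty)=C^{\perp}(\overline D,\rho_\ell P_\infty)$ is built from the same divisor $\overline D$ of \eqref{Dbarra}, its length is unchanged: by $\mathbb F_{2^{2n}}$-maximality and $2^{n}=3m-1$, one has $r=|GGS(q,n)(\mathbb F_{2^{2n}})|-1=2^{2n}+2^{n}(9m-7)=(3m-1)^{2}+(3m-1)(9m-7)=(3m-1)(12m-8)$. For the dimension, recall from Proposition~\ref{rimarco} that $H(P_\infty)$ is symmetric with conductor $2g=9m-7$; as $\rho_\ell=9m+8>2g$ this gives $\rho_{\ell+1}=\rho_\ell+1=9m+9$, and since $\deg(\rho_\ell P_\infty)=9m+8>2g-2$ with $r>\deg(\rho_\ell P_\infty)$, the Riemann--Roch Theorem yields $\dim C(\overline D,\rho_\ell P_\infty)=\rho_\ell+1-g=\tfrac{9m+25}{2}$, hence $k=r-\tfrac{9m+25}{2}$.

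The main point is the minimum distance, via $d\ge d_{ORD}(C_\ell(P_\infty))$. I would write $\rho_{\ell+1}=9m+9$ in the form $(0,2,\tfrac{5m+9}{8})$ of Remark~\ref{scrittura}, note that $\tfrac{5m+9}{8}<m$ (equivalently $m>3$, which holds for every odd $n\ge5$), and apply Lemma~\ref{dord7}; since $\tfrac{5m+9}{8}+\tfrac{m}{8}=\tfrac{6m+9}{8}$ the relevant branch of that lemma gives $d_{ORD}(C_\ell(P_\infty))=2\big(\lceil\tfrac{6m+9}{8}\rceil+1\big)$. Exactly as in Proposition~\ref{9m}, I would then confirm attainment by exhibiting a non-gap $\rho_s\ge\rho_\ell$ with $\nu_s$ equal to this value: the non-gap $\rho_{s+1}=\big(1,0,\lceil\tfrac{6m+9}{8}\rceil\big)=3m+8\lceil\tfrac{6m+9}{8}\rceil$ is a natural candidate, satisfying $\rho_{s+1}\ge\rho_{\ell+1}$ and, by Lemma~\ref{i1}, $\nu_s=2\big(\lceil\tfrac{6m+9}{8}\rceil+1\big)$.

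The bounds on $\delta$ and $\Delta$ are then pure arithmetic: $\delta=r-k+1-d\le r-k+1-d_{ORD}(C_\ell(P_\infty))$, and using $\lceil x\rceil\ge x$ one checks this is $<\tfrac{6m+21}{2}$; dividing by $r=(3m-1)(12m-8)$ gives $\Delta=\delta/r<\tfrac{6m+21}{2(3m-1)(12m-8)}$, which tends to $0$ because $m\to\infty$ as $n\to\infty$.

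The main obstacle is the order-bound computation. As in Lemmas~\ref{dord1}--\ref{dord8}, it reduces to analysing System~\eqref{sist} for the particular non-gap $\rho_{\ell+1}=(0,2,\tfrac{5m+9}{8})$: one must decide, uniformly in the odd integer $n\ge5$, into which interval of Lemma~\ref{dord7} the quantity $\tfrac{5m+9}{8}$ falls (i.e. verify the inequalities relating $\tfrac{5m+9}{8}$ to the thresholds $\lfloor\tfrac{7m}{8}\rfloor-1$ and $m-3$), and then confirm that no non-gap $\rho_{s+1}\ge\rho_{\ell+1}$ yields a value of $\nu_s$ strictly below $2\big(\lceil\tfrac{6m+9}{8}\rceil+1\big)$. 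Everything else is the routine substitution already carried out in Proposition~\ref{9m}.
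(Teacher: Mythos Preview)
Your approach is precisely the paper's: the paper's entire proof is the single sentence ``Arguing as in the proof of Proposition~\ref{9m} and using Lemma~\ref{dord7}, the following result is obtained,'' and you have faithfully spelled out that template, including the extra attainment witness in the style of Proposition~\ref{9m}.

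The obstacle you flag at the end, however, is a genuine gap, and it does not close for $n\ge 7$. The branch of Lemma~\ref{dord7} producing $2(\lceil k+\tfrac{m}{8}\rceil+1)$ requires $\lfloor\tfrac{7m}{8}\rfloor-1\le k\le m-3$, but with $k=\tfrac{5m+9}{8}$ one has $k\le \lfloor\tfrac{7m}{8}\rfloor-2$ as soon as $m\ge 16$; for instance at $n=7$ ($m=43$) one gets $k=28$ while $\lfloor\tfrac{7m}{8}\rfloor-1=36$. Only the accident $m=11$ makes both sides equal to $8$ and puts $k$ on the boundary of the fifth branch. For every odd $n\ge 7$, $k$ instead lies in the fourth branch $\lceil\tfrac{3m}{8}\rceil\le k\le\lfloor\tfrac{7m}{8}\rfloor-2$, and since $k-\tfrac{5m}{8}=\tfrac{9}{8}$ independently of $m$, Lemma~\ref{dord7} then gives the constant value $d_{ORD}=8(\lceil\tfrac{9}{8}\rceil+1)=24$, not $2(\lceil\tfrac{6m+9}{8}\rceil+1)$ (which would be $70$ at $n=7$). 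So the displayed formula for $d_{ORD}$ is correct only at $n=5$; your outline cannot be completed uniformly in $n$, and the paper's one-line argument shares the same defect. The remaining items on length, dimension, and the asymptotic $\Delta\to 0$ are, as you say, routine substitutions identical to Proposition~\ref{9m}.
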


\section{Weierstrass semigroup at $P_0$}\label{Sec:P_0}

In this section we describe the Weierstrass semigroup at $P_0$, and hence at any $\mathbb F_{q^2}$-rational affine point by Lemma \ref{Orbite}.
Consider the functions 
\begin{equation}\label{functions}
\frac{y^r z^t}{x^s}, \qquad s\in [0,q^2-1], r\in [0,s], t\in \left[0,\left\lfloor\frac{sm(q+1)-rqm}{q^3} \right \rfloor \right].
\end{equation}
All these functions belong to $H(P_0)$. In fact,
$$\left(\frac{y^r z^t}{x^s}\right)=(mr+t-m(q+1)s)P_0+(m(q+1)s-mqr-tq^3)P_{\infty}$$
and by assumption 
$$m(q+1)s-mqr-tq^3\geq 0.$$

\begin{proposition}
Let $t \in  \left[0,\min\left(\left\lfloor\frac{sm(q+1)-rqm}{q^3} \right \rfloor ,m-1\right)\right]$ and 
$$s\in [0,q], r\in [0,s]$$
or 
$$s\in [q+1,q^2-1], r\in [0,q].$$
Then all the integers $mr+t-m(q+1)s$ are distinct.
\end{proposition}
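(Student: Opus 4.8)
The plan is to prove that the map $(s,r,t)\mapsto mr+t-m(q+1)s$ is injective on the given parameter set, which is exactly the assertion that the listed integers are distinct. The key remark is that $mr-m(q+1)s=m\,(r-(q+1)s)$ is a multiple of $m$, so that
$$mr+t-m(q+1)s\equiv t\pmod m;$$
since $t$ is restricted to $[0,m-1]$, it equals the residue of $mr+t-m(q+1)s$ modulo $m$. Hence, if two admissible triples $(s,r,t)$ and $(s',r',t')$ yield the same integer, the first step is to conclude $t=t'$.

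After cancelling $t$ and dividing by $m$, the equality reduces to $r-(q+1)s=r'-(q+1)s'$, that is, $r-r'=(q+1)(s-s')$. Here I would invoke that in \emph{both} of the listed ranges one has $0\le r\le q$: in the first range because $r\le s\le q$, and in the second one by hypothesis. Consequently $|r-r'|\le q<q+1$, so the number $(q+1)(s-s')$ is a multiple of $q+1$ of absolute value less than $q+1$, hence is $0$; this forces $r=r'$ and then $s=s'$. The argument is uniform and handles the three possibilities at once — collisions inside the first family, inside the second family, and between the two families; in the last case, the forced equality $s=s'$ is impossible because the $s$-ranges $[0,q]$ and $[q+1,q^2-1]$ are disjoint.

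I do not anticipate a genuine obstacle: the proof is a short congruence-and-size argument. The only subtlety worth flagging explicitly is the role of the constraint $t\le m-1$ in the definition of the parameter set — it is precisely what lets $t$ be recovered as a residue modulo $m$ — whereas the other bound $t\le\left\lfloor\frac{sm(q+1)-rqm}{q^3}\right\rfloor$ is irrelevant to distinctness (it serves only to keep the corresponding function's pole at $P_\infty$ within the prescribed order). One should also dispose of the trivial cases in which some $t$-range is empty, where there is nothing to check.
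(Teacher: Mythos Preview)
Your argument is correct and essentially identical to the paper's: both reduce modulo $m$ to get $t=t'$, then use that $r,r'\in[0,q]$ so that $r\equiv r'\pmod{q+1}$ forces $r=r'$ and hence $s=s'$. You are just slightly more explicit about why $r\le q$ holds in both ranges and about the cross-family case.
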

\proof
Suppose $mr+t-m(q+1)s=m\overline r+\overline t-m(q+1)\overline s$. Then $t\equiv \overline t \pmod{m}$, which implies $t=\overline t$. Now, from $mr-m(q+1)s=m\overline r-m(q+1)\overline s$, $r\equiv \overline r \pmod{q+1}$, which yields $r=\overline r$ and $s=\overline s$. 
\endproof

\begin{proposition}
Consider the following sets 
$$\begin{array}{l}
\mathcal{L}_1 := \Big\{-t-rm+m(q+1)s \mid s\in [0,q], r\in [0,s], \\
 \hspace{8 cm} t\in \left[0,((s-r)q+s)\frac{m-q^2+q-1}{q^3}+s-r \right]\Big\};\\
\\
\mathcal{L}_2 := \Big\{-t-rm+m(q+1)s \mid s\in [q+1,q^2-q], r\in [0,q],\\
 \hspace{8 cm} t\in \left[0,((s-r)q+s)\frac{m-q^2+q-1}{q^3}+s-r \right]\Big\};\\
 \\
\mathcal{L}_3 := \Big\{-t-rm+m(q+1)s \mid s\in [q^2-q+1,q^2-2],\\
 \hspace{8.5 cm} r\in [0,q+s-q^2-1], t\in \left[0,m-1 \right]\Big\};\\
\\
\mathcal{L}_4 := \Big\{-t-rm+m(q+1)s \mid s\in [q^2-q+1,q^2-2], r\in [q+s-q^2,q],\\
 \hspace{8 cm} t\in \left[0,((s-r)q+s)\frac{m-q^2+q-1}{q^3}+s-r \right]\Big\};\\
 \\
\mathcal{L}_5 := \left\{-t+m(q+1)(q^2-1) \mid  t\in \left[q^3,m-1 \right]\right\};\\
\\
\mathcal{L}_6 := \left\{-t-rm+m(q+1)(q^2-1) \mid r\in [1,q-2], t\in \left[0,m-1 \right]\right\};\\
\\
\mathcal{L}_7 := \Big\{-t-rm+m(q+1)(q^2-1) \mid r\in [q-1,q], \\
 \hspace{6.5 cm} t\in \left[0,((q^2-1-r)q+q^2-1)\frac{m-q^2+q-1}{q^3}+q^2-1-r \right]\Big\}.\\
\end{array}
$$
Then each $\mathcal{L}_i$ is contained in $\mathcal{L}((2g-1)P_0)$.
\end{proposition}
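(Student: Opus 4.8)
The plan is to reduce everything to one explicit family of functions. For a triple $(s,r,t)$ occurring in the index set of one of the $\mathcal{L}_i$, consider $f_{s,r,t}:=y^r z^t/x^s$. Since $x$ vanishes only at $P_0$ while $y$ and $z$ have poles only at $P_\infty$, the function $f_{s,r,t}$ has poles at most at $P_0$ and $P_\infty$, and from the divisors of $x,y,z$ recalled above
$$
(f_{s,r,t})=\big(mr+t-m(q+1)s\big)P_0+\big(m(q+1)s-mqr-tq^3\big)P_\infty .
$$
Thus the statement amounts to checking, for every admissible triple in every $\mathcal{L}_i$, that (i) $m(q+1)s-mqr-tq^3\ge 0$, so $f_{s,r,t}$ has no pole at $P_\infty$; one then gets $N:=m(q+1)s-mr-t\ge 0$ for free, so $N$ is exactly the pole order of $f_{s,r,t}$ at $P_0$ and $N=-t-rm+m(q+1)s\in H(P_0)$; and (ii) $N\le 2g-1$, so that $f_{s,r,t}\in\mathcal{L}((2g-1)P_0)$.

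For (i) there are two shapes of bound on $t$ to handle. In $\mathcal{L}_1,\mathcal{L}_2,\mathcal{L}_4,\mathcal{L}_7$ the bound is $t\le((s-r)q+s)\frac{m-q^2+q-1}{q^3}+(s-r)$; I would clear the denominator $q^3$, set $B:=(q+1)s-qr$ (so that $(s-r)q+s=B$ and $m(q+1)s-mqr=mB$), and use the factorisation $(q+1)(q^2-q+1)=q^3+1$ to rewrite $B(q^2-q+1)=(s-r)q^3+s+rq(q-1)$; the required inequality $tq^3\le mB$ then reduces to the trivial $0\le s+rq(q-1)$. In $\mathcal{L}_3,\mathcal{L}_5,\mathcal{L}_6$ the bound is simply $t\le m-1$, and there it suffices to verify $B=(q+1)s-qr\ge q^3$ on the relevant ranges — immediate for $\mathcal{L}_5$ and $\mathcal{L}_6$, and for $\mathcal{L}_3$ a consequence of $r\le q+s-q^2-1$ and $s\ge q^2-q+1$ — since then $tq^3\le(m-1)q^3\le mq^3\le mB$.

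For (ii) I would maximise $N=m(q+1)s-mr-t$ over each index set. In every family the maximum is attained with $t$ as small as allowed (so $t=0$, except in $\mathcal{L}_5$ where $t=q^3$) and with the extreme admissible pair $(s,r)$, which I would pin down case by case. Using $m(q+1)=q^n+1$ and $2g-1=q^{n+2}-q^n-q^3+q^2-1$, one checks that this maximum is $\le 2g-1$: the margin is of order $q^n$ in $\mathcal{L}_1,\dots,\mathcal{L}_4$, it reduces to $m>q^3$ (valid for $n\ge 5$) in $\mathcal{L}_6$ and to $(q-1)m>q^3$ in $\mathcal{L}_7$, and in $\mathcal{L}_5$ it is an equality, the maximum being $(q^2-1)(q^n+1)-q^3=q^{n+2}-q^n-q^3+q^2-1=2g-1$ exactly; this last point is precisely the reason for the lower restriction $t\ge q^3$ built into the definition of $\mathcal{L}_5$.

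Putting (i) and (ii) together shows that every integer of every $\mathcal{L}_i$ is the pole order at $P_0$ of a function lying in $\mathcal{L}((2g-1)P_0)$, which is the assertion. I expect the only genuine work to be the bookkeeping in step (ii): locating the maximiser inside each of the seven index sets and carrying out the comparison with $2g-1$, and in particular recognising that the built-in bound $t\ge q^3$ in $\mathcal{L}_5$ is exactly what keeps $N$ from overshooting. Step (i) is a routine computation once the factorisation $q^3+1=(q+1)(q^2-q+1)$ is used.
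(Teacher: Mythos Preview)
Your proposal is correct and is exactly the ``direct computation'' the paper invokes: exhibit $y^r z^t/x^s$, check that the $P_\infty$-coefficient $m(q+1)s-mqr-tq^3$ is nonnegative on each index set (your two-shape split with the identity $B(q^2-q+1)=(s-r)q^3+s+rq(q-1)$ is the clean way to do it), and then bound the pole order at $P_0$ by $2g-1$. The only cosmetic point is that the displayed divisor formula (inherited from the paper) omits the effective part supported on the other $\mathbb{F}_{q^2}$-rational zeros of $y$ and $z$; since you already argued that poles can occur only at $P_0$ and $P_\infty$, this does not affect the argument.
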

\proof
By direct computations.
\endproof

Finally, we can give the description of the Weierstrass semigroup $H(P_0)$.

\begin{proposition}\label{SemigruppoP0}
$$\bigcup _{i=1}^7\mathcal{L}_i =H(P_0) \cap \{0,\ldots, 2g-1\}.$$
\end{proposition}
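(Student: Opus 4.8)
The plan is to prove the two inclusions $\bigcup_{i=1}^7 \mathcal{L}_i \subseteq H(P_0)\cap\{0,\dots,2g-1\}$ and $H(P_0)\cap\{0,\dots,2g-1\}\subseteq \bigcup_{i=1}^7\mathcal{L}_i$ separately, and for the second inclusion to reduce everything to a counting argument using the fact that $H(P_0)$ is symmetric (because $(2g-2)P_0$ is a canonical divisor on a curve whose canonical divisor is supported at $P_0$ up to linear equivalence, or more directly because $GGS(q,n)$ is maximal and $P_0$ lies in a short orbit). The first inclusion is essentially already done: the preceding Proposition shows each $\mathcal{L}_i\subseteq \mathcal{L}((2g-1)P_0)$, and the explicit pole-order formula
$$\left(\frac{y^r z^t}{x^s}\right)=(mr+t-m(q+1)s)P_0+(m(q+1)s-mqr-tq^3)P_{\infty}$$
shows that each element of $\mathcal{L}_i$ is a genuine pole number at $P_0$, provided the exponent ranges keep the $P_\infty$-coefficient nonnegative; so the first inclusion follows once one checks (by the direct computations already invoked) that the stated $t$-ranges do force $m(q+1)s-mqr-tq^3\ge 0$. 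Thus the real content is the reverse inclusion.

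For the reverse inclusion, first I would establish that the seven sets $\mathcal{L}_1,\dots,\mathcal{L}_7$ are pairwise disjoint. This is the analogue of the injectivity Proposition proved just above for the functions $y^rz^t/x^s$: two elements $-t-rm+m(q+1)s$ and $-\overline t-\overline r m+m(q+1)\overline s$ coincide only if $t\equiv\overline t\pmod m$, hence $t=\overline t$ once both lie in $[0,m-1]$, and then $r\equiv\overline r\pmod{q+1}$ forces $r=\overline r$ and $s=\overline s$; the index ranges defining the $\mathcal{L}_i$ are set up precisely so that the $(s,r)$-regions are disjoint and the $t$-ranges stay within $[0,m-1]$, so no collisions occur across different $i$ either. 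Consequently $\left|\bigcup_{i=1}^7\mathcal{L}_i\right|=\sum_{i=1}^7|\mathcal{L}_i|$, and each $|\mathcal{L}_i|$ is a sum over the nested index ranges of the length of the $t$-interval, which is a completely explicit (if tedious) polynomial expression in $q$.

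Then I would invoke symmetry of $H(P_0)$: a symmetric semigroup of genus $g$ satisfies $|H(P_0)\cap\{0,\dots,2g-1\}|=g$. Since we already have $\bigcup_{i=1}^7\mathcal{L}_i\subseteq H(P_0)\cap\{0,\dots,2g-1\}$, it suffices to show $\sum_{i=1}^7|\mathcal{L}_i|=g=\tfrac12(q-1)(q^{n+1}+q^n-q^2)$. This is where the main obstacle lies: carrying out the summation of the seven cardinalities and simplifying — using $m=(q^n+1)/(q+1)$ so that $m(q+1)=q^n+1$ and $mq=(q^{n+1}+q)/(q+1)$ — to recognize the genus exactly. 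The arithmetic is delicate because the $t$-ranges involve floor functions and the quantity $\frac{m-q^2+q-1}{q^3}$, and one must be careful that $q^3\mid s m(q+1)-rqm$ on the nose in the relevant regimes (this divisibility is what makes the floors disappear and the counts clean). Once the total is shown to equal $g$, disjointness plus the forward inclusion plus the counting force equality of the sets, completing the proof. I would present the cardinality computation as a sequence of seven displayed sums followed by one consolidated simplification, deferring the most routine algebra to "a direct computation."
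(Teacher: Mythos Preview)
Your proposal is essentially the same approach as the paper's: the paper's proof consists of listing the seven cardinalities $|\mathcal L_i|$ explicitly, noting that $\mathcal L_i\cap\mathcal L_j=\emptyset$ for $i\neq j$, and concluding ``by direct computations'' --- i.e.\ exactly the counting argument you outline. One small correction: you do not need symmetry of $H(P_0)$ at all, since for \emph{any} numerical semigroup of genus $g$ the Frobenius number is at most $2g-1$, so all $g$ gaps lie in $\{1,\dots,2g-1\}$ and $|H(P_0)\cap\{0,\dots,2g-1\}|=g$ automatically; invoking symmetry here is harmless but superfluous.
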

\proof
By direct computations, since 
$$|\mathcal{L}_1|=\left(\frac{q^4 + 5q^3 + 8q^2 + 4q}{6}\right)\left(\frac{m-q^2+q-1}{q^3}\right)+\frac{(q+1)(q+2)(q+3)}{6},$$
$$|\mathcal{L}_2|=\left(\frac{q^6 - q^5 - q^4 - 3q^2 - 2q}{2}\right)\left(\frac{m-q^2+q-1}{q^3}\right)+\frac{q^5 - 2q^4 + 2q^3 - q^2 - 6q}{2},$$
$$|\mathcal{L}_4|=\left(\frac{3q^5 + 2q^4 - 20q^3 + q^2 + 8q + 12}{6}\right)\left(\frac{m-q^2+q-1}{q^3}\right)+\frac{3q^4 - q^3 - 18q^2 + 22q - 12}{6},$$
$$|\mathcal{L}_3|=\frac{m(q-2)(q-1)}{2}, \qquad |\mathcal{L}_5|=m-q^3, \qquad |\mathcal{L}_6|=\sum_{r=1}^{q-2} m=(q-2)m,$$
$$|\mathcal{L}_7|=\frac{m-q^2+q-1}{q^3}(2q^3-q-2)+2q^2-2q+1$$
and $\mathcal{L}_i \cap \mathcal{L}_j=\emptyset $ if $i \neq j$.
\endproof

Let $C_\ell(P_0)=C^{\perp}(\tilde D,\tilde\rho_\ell P_0)$, where $\tilde D$ is as in \eqref{Dbarra} and $\tilde\rho_\ell$ is the $\ell$-th positive non-gap at $P_0$.
In this case it has not been possible to determine $d_{ORD}(C_\ell(P_0))$ for any $n$ since we do not have a basis of the Weierstrass semigroup at $P_0$.
Nevertheless, Tables \ref{tabella1}, \ref{TabMeglio}, and \ref{tabquantum1} give evidence that for some specific values of $\ell$ the AG codes and AG quantum codes from $C_\ell(P_0)$ are better than $C_\ell(P_\infty)$, since the designed relative Singleton defect of $C_\ell(P_0)$ is smaller than the one of $C_\ell(P_\infty)$.

\section{AG codes on the GGS curve for $q=2$ and $n=5$}\label{Sec:tabelle}

In this section a more detailed description of the results obtained in the previous sections is given for the particular case $q=2$, $n=5$. Recall that in this case
$$ H(P_\infty)=\{0,8,16,22,24,30,32,33,38,40,41,44,46,48,49,52\}\cup \{54,\ldots,57\} $$
$$ \cup\,\{60\}\cup\{62,\ldots, 66\} \cup\{68\}\cup\{70,\ldots, 74\}\cup\{76,\ldots,82\}\cup\{84,\ldots,90\}\cup\{92,\ldots\}. $$

For the point $P_0$ (and hence for any $\mathbb F_{q^2}$-rational point), we have from Proposition \ref{SemigruppoP0}
$$ H(P_0)=\{0,21,22\}\cup\{29,\ldots,33\}\cup\{42,43,44\}\cup\{50,\ldots,55\}\cup\{58,\ldots,66\}\cup\{71,\ldots,77\}\cup\{79,\ldots\}. $$

Table \ref{tabella1} contains the parameters of the codes $C_{\ell_\infty}(P_\infty)$ and $C_{\ell_0}(P_0)$; in particular, their common length $N=3968$ and dimension $k$, their Feng-Rao designed minimum distance $d_{ORD}^{\infty}$ and $d_{ORD}^{0}$, their designed Singleton defects $\delta_\infty=N+1-k-d_{ORD}^{\infty}$ and $\delta_0=N+1-k-d_{ORD}^{0}$, and their designed relative Singleton defects $\Delta_{\infty}=\frac{\delta_{\infty}}{N}$ and $\Delta_{0}=\frac{\delta_{0}}{N}$.

\begin{center}
\tabcolsep = 0.5 mm
{\scriptsize
\begin{longtable}{|c|c||c|c|c|c||c|c|c|c|}
\caption[Codes $C_{\ell_\infty}(P_\infty)$ and $C_{\ell_0}(P_0)$, $q^n=2^5$]{Codes $C_{\ell_\infty}(P_\infty)$ and $C_{\ell_0}(P_0)$, $q^n=2^5$} \label{tabella1} \\
\hline
  &   &   &    &   &   &   &   &   &  \\[-2 mm]
   $N$ & $k$ & $\rho_{\ell_\infty}$ & $d_{ORD}^{\infty}$  & $\delta_\infty \leq$ & $\Delta_\infty \leq$ & $\rho_{\ell_0}$ & $d_{ORD}^{0}$ & $\delta_0 \leq$ & $\Delta_0 \leq$\\
  &   &   &    &   &   &   &   &   &  \\[-2 mm]

\endfirsthead

\multicolumn{10}{c}%
{{ \tablename\ \thetable{} : continued from previous page}} \\
\hline
  &   &   &    &   &   &   &   &   &  \\[-2 mm]
$n$ & $k$ & $\rho_{\ell_\infty}$ & $d_{ORD}^{\infty}$  & $\delta_\infty \leq$ & $\Delta_\infty \leq$ & $\rho_{\ell_0}$ & $d_{ORD}^{0}$ & $\delta_o \leq$ & $\Delta_0 \leq$\\
  &   &   &    &   &   &   &   &   &  \\[-2 mm]
\endhead

\hline
\endfoot

\hline
\endlastfoot

\hline 3968 & 3966 & 8 & 2 & 1 & 0,0003 & 21 & 2 & 1 & 0,0003\\
\hline 3968 & 3965 & 16 & 2 & 2 & 0,0006 & 22 & 2 & 2 & 0,0006\\
\hline 3968 & 3964 & 22 & 2 & 3 & 0,0008 & 29 & 2 & 3 & 0,0008\\
\hline 3968 & 3963 & 24 & 2 & 4 & 0,0011 & 30 & 2 & 4 & 0,0011\\
\hline 3968 & 3962 & 30 & 2 & 5 & 0,0013 & 31 & 2 & 5 & 0,0013\\
\hline 3968 & 3961 & 32 & 2 & 6 & 0,0016 & 32 & 2 & 6 & 0,016\\
\hline 3968 & 3960 & 33 & 3 & 6 & 0,0016 & 33 & 3 & 6 & 0,016\\
\hline 3968 & 3959 & 38 & 3 & 7 & 0,0018 & 42 & 3 & 7 & 0,018\\
\hline 3968 & 3958 & 40 & 3 & 8 & 0,0021 & 43 & 3 & 8 & 0,021\\
\hline 3968 & 3957 & 41 & 3 & 9 & 0,0023 & 44 & 3 & 9 & 0,023\\
\hline 3968 & 3956 & 44 & 4 & 9 & 0,0023 & 50 & 3 & 10 & 0,026\\
\hline 3968 & 3955 & 46 & 4 & 10 & 0,0026 & 51 & 3 & 11 & 0,028\\

\hline 3968 & 3954 & 48 & 4 & 11 & 0,0028 & 52 & 3 & 12 & 0,031\\
\hline 3968 & 3953 & 49 & 4 & 12 & 0,0031 & 53 & 3 & 13 & 0,033\\
\hline 3968 & 3952 & 52 & 4 & 13 & 0,0033 & 54 & 3 & 14 & 0,036\\
\hline 3968 & 3951 & 54 & 4 & 14 & 0,0036 & 55 & 3 & 15 & 0,038\\
\hline 3968 & 3950 & 55 & 5 & 14 & 0,0036 & 58 & 4 & 15 & 0,038\\
\hline 3968 & 3949 & 56 & 5 & 15 & 0,0038 & 59 & 5 & 15 & 0,038\\
\hline 3968 & 3948 & 57 & 5 & 16 & 0,0041 & 60 & 5 & 16 & 0,041\\

\hline 3968 & 3947 & 60 & 5 &  17 & 0,0043 & 61 & 5 & 17 & 0,043\\
\hline 3968 & 3946 & 62 & 5 & 18 & 0,0046 & 62 & 5 & 18 & 0,046\\
\hline 3968 & 3945 & 63 & 5 & 19 & 0,0048 & 63 & 5 & 19 & 0,048\\
\hline 3968 & 3944 & 64 & 5 & 20 & 0,0051 & 64 & 5 & 20 & 0,051\\
\hline 3968 & 3943 & 65 & 5 & 21 & 0,0053 & 65 & 5 & 21 & 0,053\\
\hline 3968 & 3942 & 66 & 6 & 21 & 0,0053 & 66 & 6 & 21 & 0,053\\
\hline 3968 & 3941 & 68 & 6 & 22 & 0,0056 & 71 & 6 & 22 & 0,056\\
\hline 3968 & 3940 & 70 & 6 & 23 & 0,0058 & 72 & 6 & 23 & 0,058\\
\hline 3968 & 3939 & 71 & 6 & 24 & 0,0061 & 73 & 6 & 24 & 0,061\\
\hline 3968 & 3938 & 72 & 6 & 25 & 0,0064 & 74 & 6 & 25 & 0,064\\
\hline 3968 & 3937 & 73 & 6 & 26 & 0,0066 & 75 & 6 & 26 & 0,066\\
\hline 3968 & 3936 & 74 & 6 & 27 & 0,0069 & 76 & 6 & 27 & 0,069\\

\hline 3968 & 3935 & 76 & 6 & 28 & 0,0071 & 77 & 6 & 28 & 0,0071\\
\hline 3968 & 3934 & 77 & 8 & 27 & 0,0069 & 79 & 8 & 27 & 0,069\\
\hline 3968 & 3933 & 78 & 8 & 28 & 0,0071 & 80 & 8 & 28 & 0,0071\\
\hline 3968 & 3932 & 79 & 8 & 29 & 0,0074 & 81 & 8 & 29 & 0,0074\\
\hline 3968 & 3931 & 80 & 8 & 30 & 0,0076 & 82 & 8 & 30 & 0,0076\\
\hline 3968 & 3930 & 81 & 8 & 31 & 0,0079 & 83 & 8 & 31 & 0,0079\\
\hline 3968 & 3929 & 82 & 8 & 32 & 0,0081 & 84 & 8 & 32 & 0,0081\\
\hline 3968 & 3928 & 84 & 8 & 33 & 0,0084 & 85 & 8 & 33 & 0,0084\\
\hline 3968 & 3927 & 85 & 8 & 34 & 0,0086 & 86 & 8 & 34 & 0,0086\\
\hline 3968 & 3926 & 86 & 8 & 35 & 0,0089 & 87 & 8 & 35 & 0,0089\\
\hline 3968 & 3925 & 87 & 8 & 36 & 0,0091 & 88 & 8 & 36 & 0,0091\\
\hline 3968 & 3924 & 88 & 8 & 37 & 0,0094 & 89 & 8 & 37 & 0,0094\\
\hline 3968 & 3923 & 89 & 8 & 38 & 0,0096 & 90 & 8 & 38 & 0,0096\\
\hline 3968 & 3922 & 90 & 8 & 39 & 0,0099 & 91 & 8 & 39 & 0,0099\\
\hline 3968 & 3921 & 92 & 8 & 40 & 0,0101 & 92 & 8 & 40 & 0,0101\\
\hline 3968 & 3920 & 93 & 8 & 41 & 0,0104 & 93 & 8 & 41 & 0,0104\\
\hline 3968 & 3919 & 94 & 8 & 42 & 0,0106 & 94 & 8 & 42 & 0,0106\\
\hline 3968 & 3918 & 95 & 8 & 43 & 0,0109 & 95 & 8 & 43 & 0,0109\\
\hline 3968 & 3917 & 96 & 8 & 44 & 0,0111 & 96 & 8 & 44 & 0,0111\\
\hline 3968 & 3916 & 97 & 8 & 45 & 0,0114 & 97 & 8 & 45 & 0,0114\\
\hline 3968 & 3915 & 98 & 8 & 46 & 0,0116 & 98 & 8 & 46 & 0,0116\\
\hline 3968 & 3914 & 99 & 16 & 39 & 0,0099 & 99 & 9 & 46 & 0,0116\\
\hline 3968 & 3913 & 100 & 16 & 40 & 0,0101 & 100 & 16 & 40 & 0,0101\\
\hline 3968 & 3912 & 101 & 16 & 41 & 0,0104 & 101 & 21 & 36 & 0,0091\\
\hline 3968 & 3911 & 102 & 16 & 42 & 0,0106 & 102 & 22 & 36 & 0,0091\\
\hline 3968 & 3910 & 103 & 16 & 43 & 0,0109 & 103 & 22 & 37 & 0,0094\\
\hline 3968 & 3909 & 104 & 16 & 44 & 0,0111 & 104 & 22 & 38 & 0,0096\\
\hline 3968 & 3908 & 105 & 16 & 45 & 0,0114 & 105 & 22 & 39 & 0,0099\\
\hline 3968 & 3907 & 106 & 16 & 46 & 0,0116 & 106 & 22 & 40 & 0,0101\\
\hline 3968 & 3906 & 107 & 22 & 41 & 0,0104 & 107 & 22 & 41 & 0,0104\\
\hline 3968 & 3905 & 108 & 22 & 42 & 0,0106 & 108 & 22 & 42 & 0,0106\\
\hline 3968 & 3904 & 109 & 22 & 43 & 0,0109 & 109 & 22 & 43 & 0,0109\\
\hline 3968 & 3903 & 110 & 22 & 44 & 0,0111 & 110 & 22 & 44 & 0,0111\\
\hline 3968 & 3902 & 111 & 22 & 45 & 0,0114 & 111 & 26 &41 & 0,0104\\
\hline 3968 & 3901 & 112 & 22 & 46 & 0,0116 & 112 & 29 & 39 & 0,0099\\
\hline 3968 & 3900 & 113 & 24 & 45 & 0,0114 & 113 & 29 & 40 & 0,0101\\
\hline 3968 & 3899 & 114 & 24 & 46 & 0,0116 & 114 & 29 & 41 & 0,0104\\
\hline 3968 & 3898 & 115 & 30 & 41 & 0,0104 & 115 & 29 & 42 & 0,0106\\
\hline 3968 & 3897 & 116 & 30 & 42 & 0,0106 & 116 & 29 & 43 & 0,0109\\
\hline 3968 & 3896 & 117 & 30 & 43 & 0,0109 & 117 & 29 & 44 & 0,0111\\
\hline 3968 & 3895 & 118 & 30 & 44 & 0,0111 & 118 & 29 & 45 & 0,0114\\
\hline 3968 & 3894 & 119 & 30 & 45 & 0,0114 & 119 & 29 & 46 & 0,0116\\
\hline 3968 & 3893 & 120 & 30 & 46 & 0,0116 & 120 & 30 & 46 & 0,0116\\
\hline 3968 & 3892 & 121 & 32 & 45 & 0,0114 & 121 & 31 & 46 & 0,0116\\
\hline 3968 & 3891 & 122 & 32 & 46 & 0,0116 & 122 & 36 & 42 & 0,0106\\
\hline 3968 & 3890 & 123 & 33 &  46 & 0,0116 & 123 & 37 & 42 & 0,0106\\

\hline 3968 & 3889 & 124 & 38 & 42 & 0,0106 & 124 & 37 & 43 & 0,0109\\
\hline 3968 & 3888 & 125 & 38 & 43 & 0,0109 & 125 & 37 & 44 & 0,0111\\
\hline 3968 & 3887 & 126 & 38 & 44 & 0,0111 & 126 & 37 & 45 & 0,0114\\
\hline 3968 & 3886 & 127 & 38 & 45 & 0,0114 & 127 & 37 & 46 & 0,0116\\
\hline 3968 & 3885 & 128 & 38 & 46 & 0,0116 & 128 & 38 & 46 & 0,0116\\
\hline 3968 & 3884 & 129 & 40 & 45 & 0,0114 & 129 & 39 & 46 & 0,0116\\
\hline 3968 & 3883 & 130 & 40 & 46 & 0,0116 & 130 & 40 & 46 & 0,0116\\
\hline 3968 & 3882 & 131 & 41 & 46 & 0,0116 & 131 & 41 & 46 & 0,0116\\
\hline 3968 & 3881 & 132 & 44 & 44 & 0,0111 & 132 & 42 & 46 & 0,0116\\
\hline 3968 & 3880 & 133 & 44 & 45 & 0,0114 & 133 & 46 & 43 & 0,0109\\
\hline 3968 & 3879 & 134 & 44 & 46 & 0,0116 & 134 & 48 & 42 & 0,0106\\
\hline 3968 & 3878 & 135 & 46 & 45 & 0,0114 & 135 & 48 & 43 & 0,0109\\
\hline 3968 & 3877 & 136 & 46 & 46 & 0,0116 & 136 & 50 & 42 & 0,0106\\
\hline 3968 & 3876 & 137 & 48 & 45 & 0,0114 & 137 & 50 & 43 & 0,0109\\
\hline 3968 & 3875 & 138 & 48 & 46 & 0,0116 & 138 & 50 & 44 & 0,0111\\
\hline 3968 & 3874 & 139 & 49 & 46 & 0,0116 & 139 & 50 & 45 & 0,0114\\
\hline 3968 & 3873 & 140 & 52 & 44 & 0,0111 & 140 & 50 & 46 & 0,0116\\
\hline 3968 & 3872 & 141 & 52 & 45 & 0,0114 & 141 & 51 & 46 & 0,0116\\
\hline 3968 & 3871 & 142 & 52 & 46 & 0,0116 & 142 & 52 & 46 & 0,0116\\
\hline 3968 & 3870 & 143 & 54 & 45 & 0,0114 & 143 & 53 & 46 & 0,0116\\
\hline 3968 & 3869 & 144 & 54 & 46 & 0,0116 & 144 & 56 & 44 & 0,0111\\

\hline 3968 & 3868 & 145 & 55 & 46 & 0,0116 & 145 & 57 & 44 & 0,0111\\
\hline 3968 & 3867 & 146 & 56 & 46 & 0,0116 & 146 & 58 & 44 & 0,0111\\
\hline 3968 & 3866 & 147 & 57 & 46 & 0,0116 & 147 & 58 & 45 & 0,0114\\
\hline 3968 & 3865 & 148 & 60 & 44 & 0,0111 & 148 & 58 & 46 & 0,0116\\
\hline 3968 & 3864 & 149 & 60 & 45 & 0,0114 & 149 & 59 & 46 & 0,0116\\
\hline 3968 & 3863 & 150 & 60 & 46 & 0,0116 & 150 & 60 & 46 & 0,0116\\
\hline 3968 & 3862 & 151 & 62 & 45 & 0,0114 & 151 & 61 & 46 & 0,0116\\
\hline 3968 & 3861 & 152 & 62 & 46 & 0,0116 & 152 & 62 & 46 & 0,0116\\
\hline 3968 & 3860 & 153 & 63 & 46 & 0,0116 & 153 & 63 & 46 & 0,0116\\
\hline 3968 & 3859 & 154 & 64 & 46 & 0,0116 & 154 & 64 & 46 & 0,0116\\
\hline 3968 & 3858 & 155 & 65 & 46 & 0,0116 & 155 & 66 & 45 & 0,0114\\
\hline 3968 & 3857 & 156 & 66 & 46 & 0,0116 & 156 & 66 & 46 & 0,0116\\
\hline 3968 & 3856 & 157 & 68 & 45 & 0,0114 & 157 & 67 & 46 & 0,0116\\
\hline 3968 & 3855 & 158 & 68 & 46 & 0,0116 & 158 & 68 & 46 & 0,0116\\
\hline 3968 & 3854 & 159 & 70 & 45 & 0,0114 & 159 & 69 & 46 & 0,0116\\
\hline 3968 & 3853 & 160 & 70 & 46 & 0,0116 & 160 & 70 & 46 & 0,0116\\
\hline 3968 & 3852 & 161 & 71 & 46 & 0,0116 & 161 & 71 & 46 & 0,0116\\
\hline 3968 & 3851 & 162 & 72 & 46 & 0,0116 & 162 & 72 & 46 & 0,0116\\
\hline 3968 & 3850 & 163 & 73 & 46 & 0,0116 & 163 & 73 & 46 & 0,0116\\
\hline 3968 & 3849 & 164 & 74 & 46 & 0,0116 & 164 & 74 & 46 & 0,0116\\
\hline 3968 & 3848 & 165 & 76 & 45 & 0,0114 & 165 & 75 & 46 & 0,0116\\

\hline 3968 & 3847 & 166 & 76 & 46 & 0,0116 & 166 & 76 & 46 & 0,0116\\
\hline 3968 & 3846 & 167 & 77 & 46 & 0,0116 & 167 & 77 & 46 & 0,0116\\
\hline 3968 & 3845 & 168 & 78 & 46 & 0,0116 & 168 & 78 & 46 & 0,0116\\
\hline 3968 & 3844 & 169 & 79 & 46 & 0,0116 & 169 & 79 & 46 & 0,0116\\
\hline 3968 & 3843 & 170 & 80 & 46 & 0,0116 & 170 & 80 & 46 & 0,0116\\
\hline 3968 & 3842 & 171 & 81 & 46 & 0,0116 & 171 & 81 & 46 & 0,0116\\
\hline 3968 & 3841 & 172 & 82 & 46 & 0,0116 & 172 & 82 & 46 & 0,0116\\
\hline 3968 & 3840 & 173 & 84 & 45 & 0,0114 & 173 & 83 & 46 & 0,0116\\
\hline 3968 & 3839 & 174 & 84 & 46 & 0,0116 & 174 & 84 & 46 & 0,0116\\
\hline 3968 & 3838 & 175 & 85 & 46 & 0,0116 & 175 & 85 & 46 & 0,0116\\
\hline 3968 & 3837 & 176 & 86 & 46 & 0,0116 & 176 & 86 & 46 & 0,0116\\
\hline 3968 & 3836 & 177 & 87 & 46 & 0,0116 & 177 & 87 & 46 & 0,0116\\
\hline 3968 & 3835 & 178 & 88 & 46 & 0,0116 & 178 & 88 & 46 & 0,0116\\
\hline 3968 & 3834 & 179 & 89 & 46 & 0,0116 & 179 & 89 & 46 & 0,0116\\
\hline 3968 & 3833 & 180 & 90 & 46 & 0,0116 & 180 & 90 & 46 & 0,0116\\
\hline 3968 & 3832 & 181 & 92 & 45 & 0,0114 & 181 & 91 & 46 & 0,0116\\
\hline 3968 & 3831 & 182 & 92 & 46 & 0,0116 & 182 & 92 & 46 & 0,0116\\

\hline 3968 & $3968-\ell_{\infty}$ & $\rho_{\ell_\infty} \geq 183$ & $\ell_{\infty}-45$ & 46 & 0,0116 & $\rho_{\ell_0} \geq 183$ & $\ell_{0}-45$ & 46 & 0,0116 \\

\end{longtable}
}
\end{center}

Table \ref{TabMeglio} provides some examples in which codes of type $C_{\ell_0}(P_0)$ have better parameters than codes of type $C_{\ell_\infty}(P_\infty)$.
In particular, the length $n$ of the two codes is $3968$, the dimension $k_0$ and the Feng-Rao designed minimum distance $d_{ORD}^0$ of $C_{\ell_0}(P_0)$ are greater than or equal to the corresponding parameters $k_\infty$ and $d_{ORD}^\infty$ of $C_{\ell_\infty}(P_\infty)$, and the designed Singleton defect $\delta_0=n+1-k_0-d_{ORD}^0$ of $C_{\ell_0}(P_0)$ is strictly smaller than the designed Singleton defect $\delta_\infty=n+1-k_\infty-d_{ORD}^{\infty}$ of $C_{\ell_\infty}(P_\infty)$.

\begin{table}[htbp]
\begin{scriptsize}
\caption{Designed Singleton defect of $C_{\ell_0}(P_0)$ and $C_{\ell_\infty}(P_\infty)$, $q^n=2^5$}\label{TabMeglio}
\begin{center}
\begin{tabular}{|c|c|c|c|c|c|c|c|c|c|c|c|c|c|c|c|}
\hline $\ell_0$ & 3 & 4 & 5 & 6 & 8 & 9 & 10 & 19 & 20 & 21 & 22 & 23 & 24 & 26 & 27\\

\hline $\ell_\infty$ & 4 & 5 & 6 & 7 & 9 & 10 & 11 & 20 & 21 & 22 & 23 & 24 & 25 & 27 & 28\\

\hline $\delta_\infty-\delta_0$ & 1 & 1 & 1 & 1 & 1 & 1 & 1 & 1 & 1 & 1 & 1 & 1 & 1 & 1 & 1\\

\hline\hline $\ell_0$ & 28 & 29 & 30  & 31  & 32 & 34 & 35 & 36 & 37 & 38 & 39 & 40 & 41 & 42 & 43\\

\hline $\ell_\infty$ & 29 & 30 & 31  & 32  & 33 & 35 & 36 & 37 & 38 & 39 & 40 & 41 & 42 & 43 & 44\\

\hline $\delta_\infty-\delta_0$ & 1 & 1 & 1 & 1  & 1 & 1 & 1 & 1 & 1 & 1 & 1 & 1 & 1 & 1 & 1\\

\hline\hline $\ell_0$ & 44 & 45 & 46 & 47 & 48 & 49 & 50 & 51  & 52 & 55 & 56 & 56 & 57 & 57 & 58\\

\hline $\ell_\infty$ & 45 & 46 & 47 & 48 & 49 & 50 & 51 & 52  & 53 & 56 & 56 & 57 & 57 & 58 & 58\\

\hline $\delta_\infty-\delta_0$ & 1 & 1 & 1 & 1 & 1 & 1 & 1 & 1 & 1 & 1 & 5 & 6 & 6 & 7 & 6\\

\hline\hline $\ell_0$ & 58 & 59 & 59 & 60 & 60 & 61 & 61 & 62 & 63 & 64 & 65 & 66  & 66 & 67 & 67\\

\hline $\ell_\infty$ & 59 & 59 & 60 & 60 & 61 & 61 & 62 & 63 & 64 & 65 & 66 & 66  & 67 & 67 & 68\\

\hline $\delta_\infty-\delta_0$ & 7 & 6 & 7 & 6 & 7 & 6 & 1 & 1 & 1 & 1 & 1 & 4  & 6 & 7 & 6\\

\hline\hline $\ell_0$ & 68 & 68 & 69 & 77 & 77 & 78 & 88 & 88 & 89 & 89 & 90 & 90 & 91 & 91 &\\

\hline $\ell_\infty$ & 68 & 69 & 69 & 77 & 78 & 78 & 88 & 89 & 89 & 90 & 90 & 91 & 91 & 92 &\\

\hline $\delta_\infty-\delta_0$ & 5 & 4 & 5 & 4 & 4 & 4 & 2 & 3 & 4 & 3 & 2 & 3 & 4 & 3 &\\

\hline\hline $\ell_0$ & 92 & 92  & 93 & 93 & 94 & 99 & 99 & 100 & 100 & 101 & 101 & 102 & 110 & 110 & \\

\hline $\ell_\infty$ & 92 & 93  & 93 & 94 & 94 & 99 & 100 & 100 & 101 & 101 & 102 & 102 & 110 & 111 &  \\

\hline $\delta_\infty-\delta_0$ & 2 & 1  & 2 & 2 & 1 & 2 & 2 & 2 & 2 & 2 & 2 & 1 & 1 & 1 &  \\
\hline
\end{tabular}
\end{center}
\end{scriptsize}
\end{table}

\section{Quantum codes from one-point AG codes on the GGS curves}\label{Sec:Application2}

In this section we use families of one-point AG codes from the GGS curve to construct quantum codes. The main ingredient is the so called {\it CSS contruction} which enables to construct quantum codes from classical linear codes; see \cite[Lemma 2.5]{LGP}.

We denote by $q$ a prime power.
A $q$-ary quantum code $Q$ of length $N$ and dimension $k$ is defined to be a $q^k$-dimensional Hilbert subspace of a $q^N$-dimensional Hilbert space $\mathbb H=(\mathbb C^q)^{\otimes n}=\mathbb C^q\otimes\cdots\otimes\mathbb C^q$. If $Q$ has minimum distance $D$, then $Q$ can correct up to $\lfloor\frac{D-1}{2}\rfloor$ quantum errors.
The notation $[[N,k,D]]_q$ is used to denote such a quantum code $Q$.
For a $[[N,k,D]]_q$-quantum code the quantum Singleton bound holds, that is, the minimum distance satisfies $D\leq 1+(N-k)/2$.
The quantum Singleton defect is $\delta^Q:=N-k-2D+2\geq0$, and the relative quantum Singleton defect is $\Delta^Q:=\delta^Q/N$.
If $\delta^Q=0$, then the code is said to be quantum MDS.
For a detailed introduction on quantum codes see \cite{LGP} and the references therein.

\begin{lemma} \label{ccs} {\rm (CSS construction)}
Let $C_1$ and $C_2$ denote two linear codes with parameters $[N,k_i,d_i]_q$, $i=1,2$, and assume that $C_1 \subset C_2$. Then there exists an $[[N,k_2-k_1,D]]_q$ code with $D=\min\{wt(c) \mid c \in (C_2 \setminus C_1) \cup (C_1^\perp \setminus C_2^\perp)\}$, where $wt(c)$ is the weight of $c$.
\end{lemma}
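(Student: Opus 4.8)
The plan is to follow the standard stabilizer-code argument. First I would recall that a $q$-ary quantum code can be obtained from a pair of classical codes via the symplectic (or Hermitian) inner product on $\F_q^{2N}$: given $C_1\subset C_2$, the $\F_q$-linear subspace $\{(c,c')\mid c\in C_1^\perp,\ c'\in C_2^\perp\}$ of $\F_q^{2N}$ is self-orthogonal with respect to the symplectic form $\langle(a,b),(a',b')\rangle = a\cdot b' - b\cdot a'$, precisely because $C_1^\perp\cdot C_2^\perp \subseteq C_1^\perp\cdot C_1 = 0$ and $C_2^\perp\cdot C_1^\perp \subseteq C_2^\perp\cdot C_2 =0$ (here I use $C_1\subset C_2$, equivalently $C_2^\perp\subset C_1^\perp$, together with $D\cdot D^\perp=0$ for any linear code $D$). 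The dimension of this symplectic-isotropic code is $(N-k_1)+(N-k_2)=2N-(k_1+k_2)$.

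Next I would invoke the dictionary between symplectic self-orthogonal codes in $\F_q^{2N}$ and quantum stabilizer codes: an isotropic $\F_q$-subspace of $\F_q^{2N}$ of dimension $2N-k$ yields an $[[N,k,D]]_q$ quantum code, where $D$ is the minimum symplectic weight of the symplectic dual of the isotropic code that lies \emph{outside} the isotropic code itself. With $k = (k_1+k_2) - N$... wait, let me recompute: the isotropic subspace has dimension $2N-(k_1+k_2)$, so the quantum dimension is $2N - 2(N - \tfrac{k_1+k_2}{2})$—more cleanly, the number of logical qudits is $N - \big((2N-(k_1+k_2)) - N\big) = k_2-k_1$, which matches the claimed parameter $k_2-k_1$. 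The symplectic dual of $\{(c,c'):c\in C_1^\perp, c'\in C_2^\perp\}$ is $\{(c,c'):c\in C_2, c'\in C_1\}$, and the cosets of the former inside the latter are indexed by $(C_2/C_1^\perp)\times(C_1/C_2^\perp)$; the minimum distance $D$ is the smallest weight of a vector $(c,c')$ in $\{(c,c'):c\in C_2,c'\in C_1\}\setminus\{(c,c'):c\in C_1^\perp,c'\in C_2^\perp\}$, where now ``weight'' means the number of coordinates $i$ with $(c_i,c'_i)\neq(0,0)$.

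The main obstacle—really the only non-bookkeeping point—is to show this symplectic weight bound collapses to the Hamming-weight formula $D=\min\{\mathrm{wt}(c)\mid c\in(C_2\setminus C_1)\cup(C_1^\perp\setminus C_2^\perp)\}$ stated in the lemma. For the $\geq$ direction one notes that any representative can be reduced modulo the isotropic subspace; a symplectic-minimal coset representative can be taken of the form $(c,0)$ with $c\in C_2\setminus C_1^\perp$ (up to accounting, but since $C_1\subset C_2$ and we want the outside-$C_1$ part, the relevant contribution is $c\in C_2\setminus C_1$) or $(0,c')$ with $c'\in C_1^\perp\setminus C_2^\perp$, because adding an element of $\{(c,c'):c\in C_1^\perp,c'\in C_2^\perp\}$ lets us clear one component without increasing the support; its symplectic weight is then just the Hamming weight of the surviving component. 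For the $\leq$ direction one exhibits, for each $c\in C_2\setminus C_1$, the vector $(c,0)$ lying in the symplectic dual but not the isotropic code, and similarly $(0,c)$ for $c\in C_1^\perp\setminus C_2^\perp$. I would simply cite \cite[Lemma 2.5]{LGP} for this equivalence, since the excerpt explicitly points there, and present the argument above as the sketch underlying it; the computation of the dimension $k_2-k_1$ is immediate from $\dim$ counts and requires no further work.
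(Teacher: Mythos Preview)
The paper does not give a proof of this lemma at all; it merely quotes the CSS construction and points to \cite[Lemma 2.5]{LGP}. So there is no ``paper's own proof'' to compare against.

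That said, your sketch has a genuine error in the key step. The subspace you form, $\{(c,c')\mid c\in C_1^\perp,\;c'\in C_2^\perp\}$, is \emph{not} symplectic self-orthogonal under the sole hypothesis $C_1\subset C_2$. Your justification ``$C_1^\perp\cdot C_2^\perp\subseteq C_1^\perp\cdot C_1=0$'' silently uses $C_2^\perp\subset C_1$, but from $C_1\subset C_2$ one only gets $C_2^\perp\subset C_1^\perp$, not $C_2^\perp\subset C_1$. This is exactly why your dimension count breaks down and you have to ``recompute'': with your subspace the number of logical qudits would be $k_1+k_2-N$, not $k_2-k_1$, and indeed $S\subset S^{\perp_s}$ fails in general for your $S$.

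The standard (and correct) CSS choice is $S=\{(a,b)\mid a\in C_2^\perp,\;b\in C_1\}$. Then for $(a,b),(a',b')\in S$ one has $a\cdot b'\in C_2^\perp\cdot C_1\subset C_2^\perp\cdot C_2=0$ and likewise $b\cdot a'=0$, so $S$ is isotropic; $\dim S=(N-k_2)+k_1$, giving $k_2-k_1$ logical qudits; and $S^{\perp_s}=\{(a,b)\mid a\in C_1^\perp,\;b\in C_2\}$. A pair $(a,b)\in S^{\perp_s}\setminus S$ has $a\notin C_2^\perp$ or $b\notin C_1$, and since the symplectic weight of $(a,b)$ is $|\mathrm{supp}(a)\cup\mathrm{supp}(b)|\ge\max\{\mathrm{wt}(a),\mathrm{wt}(b)\}$, the minimum over $S^{\perp_s}\setminus S$ equals $\min\{\mathrm{wt}(c)\mid c\in(C_2\setminus C_1)\cup(C_1^\perp\setminus C_2^\perp)\}$, attained at the pure vectors $(0,b)$ with $b\in C_2\setminus C_1$ or $(a,0)$ with $a\in C_1^\perp\setminus C_2^\perp$. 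With this correction the rest of your outline goes through.
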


We consider the following \textit{general t-point construction} due to La Guardia and Pereira; see \cite[Theorem 3.1]{LGP}. It is a direct application of Lemma \ref{ccs} to AG codes.

\begin{lemma} \label{lem1} {\rm (General t-point construction)}
Let $\mathcal X$ be a nonsingular curve over $\mathbb F_q$ with genus $g$ and $N+t$ distinct $\mathbb F_q$-rational points, for some $N,t>0$.
Assume that $a_i,b_i$, $i=1,\ldots,t$, are positive integers such that $a_i \leq b_i$ for all $i$ and $2g-2 < \sum_{i=1}^{t} a_i < \sum_{i=1}^t b_i < N$. Then there exists a quantum code with parameters $[[N,k,D]]_{q}$ with $k=\sum_{i=1}^{t} b_i - \sum_{i=1}^{t} a_i$ and $D \geq \min \big\{ N - \sum_{i=1}^{t} b_i, \sum_{i=1}^{t} a_i - (2g-2)\big\}$.
\end{lemma}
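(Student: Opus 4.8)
The statement to prove is Lemma \ref{lem1}, the general $t$-point construction, and the plan is to derive it as a direct application of the CSS construction (Lemma \ref{ccs}) to one-point-like AG codes built from $t$ points. First I would set up the geometric data: let $Q_1,\dots,Q_t$ be $t$ of the $N+t$ given $\mathbb{F}_q$-rational points of $\mathcal X$, and let $P_1,\dots,P_N$ be the remaining ones, forming the divisor $D=P_1+\cdots+P_N$. Put $G_a=\sum_{i=1}^t a_iQ_i$ and $G_b=\sum_{i=1}^t b_iQ_i$. Since $a_i\le b_i$ for all $i$, we have $G_a\le G_b$, and the supports of both are disjoint from $\mathrm{supp}(D)$. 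Define the AG codes $C_a=C(D,G_a)$ and $C_b=C(D,G_b)$; because $G_a\le G_b$ we get $\mathcal L(G_a)\subseteq\mathcal L(G_b)$ and hence $C_a\subseteq C_b$, which is the inclusion hypothesis needed to invoke Lemma \ref{ccs}.

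Next I would pin down the parameters. The hypothesis $2g-2<\sum a_i=\deg G_a<\sum b_i=\deg G_b<N$ does two things: it makes $\deg G_a,\deg G_b$ both lie strictly between $2g-2$ and $N$, so by the Riemann--Roch theorem (as recalled in Section \ref{Sec:Preliminaries_Curves}) the evaluation maps $\eta$ are injective and $\dim C_a=\deg G_a-g+1$, $\dim C_b=\deg G_b-g+1$. Therefore $k_b-k_a=\deg G_b-\deg G_a=\sum_{i=1}^t b_i-\sum_{i=1}^t a_i$, which is the claimed dimension $k$ of the quantum code. For the minimum distance, Lemma \ref{ccs} gives $D=\min\{\mathrm{wt}(c)\mid c\in(C_b\setminus C_a)\cup(C_a^\perp\setminus C_b^\perp)\}$, so it suffices to lower-bound the minimum distances of $C_b$ (which dominates the weights in $C_b\setminus C_a$) and of $C_a^\perp$ (which dominates the weights in $C_a^\perp\setminus C_b^\perp$). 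The Goppa designed-distance bounds give $d(C_b)\ge N-\deg G_b=N-\sum b_i$ and $d(C_a^\perp)\ge \deg G_a-(2g-2)=\sum a_i-(2g-2)$; taking the minimum of these two quantities yields $D\ge\min\{N-\sum_{i=1}^t b_i,\ \sum_{i=1}^t a_i-(2g-2)\}$, exactly as stated.

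Finally I would note that the two inequalities $\sum a_i>2g-2$ and $\sum b_i<N$ guarantee that both lower bounds are strictly positive, so $D\ge 1$ and the construction is non-degenerate, and that $k>0$ follows from $\sum a_i<\sum b_i$. Assembling these pieces produces the quantum code with parameters $[[N,k,D]]_q$ as claimed, completing the proof.

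The only place requiring any care — the ``main obstacle,'' such as it is — is the bookkeeping for the dual code: one must invoke the standard fact that $C^\perp(D,G)$ is itself an AG code with designed minimum distance $\deg G-2g+2$ (recalled in Section \ref{Sec:Preliminaries_Curves}) and apply it with $G=G_a$, and one must make sure the strict inequalities in the hypothesis are used precisely where needed (positivity of the bounds, injectivity of $\eta$, validity of the Riemann--Roch dimension formula). No genuinely hard step is involved; the lemma is essentially a repackaging of Lemma \ref{ccs} together with the elementary properties of AG codes, which is why the authors can legitimately present the proof as ``a direct application.''
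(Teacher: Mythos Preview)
Your proposal is correct and follows exactly the approach the paper indicates: the paper does not spell out a proof but cites \cite[Theorem 3.1]{LGP} and remarks that it is ``a direct application of Lemma \ref{ccs} to AG codes,'' which is precisely the argument you give (take $C_1=C(D,\sum a_iQ_i)\subset C_2=C(D,\sum b_iQ_i)$, compute dimensions via Riemann--Roch using $2g-2<\deg G_a<\deg G_b<N$, and bound $D$ by the designed distances $d(C_b)\ge N-\sum b_i$ and $d(C_a^\perp)\ge\sum a_i-(2g-2)$).
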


Let $n\geq 5$ be an odd integer.
 We apply Lemma \ref{lem1} to one-point codes on the GGS curve.

\begin{proposition} \label{qua}
Let $a,b\in\mathbb N$ be such that
$$(q-1)(q^{n+1}+q^n-q^2)-2<a<b<q^{2n+2}-q^{n+3}+q^{n+2}.$$
Then there exists a quantum code with parameters $[[N,b-a,D]]_{q^{2n}}$, where 
$$N=q^{2n+2}-q^{n+3}+q^{n+2},$$
$$D \geq \min \left \{ q^{2n+2}-q^{n+3}+q^{n+2}-b, a- (q-1)(q^{n+1}+q^n-q^2)+2 \right \}.$$
\end{proposition}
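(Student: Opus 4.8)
The plan is to derive the statement as a one-line application of Lemma \ref{lem1} (the general $t$-point construction) with $t=1$, taking for $\mathcal X$ the GGS curve $GGS(q,n)$ viewed over its maximality field $\mathbb F_{q^{2n}}$. First I would recall the two numerical facts about $GGS(q,n)$ stated in Section \ref{Sec:Preliminaries}: its genus is $g=\frac12(q-1)(q^{n+1}+q^n-q^2)$, so that $2g-2=(q-1)(q^{n+1}+q^n-q^2)-2$, and it is $\mathbb F_{q^{2n}}$-maximal.

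\textbf{Point count.} Next I would count the $\mathbb F_{q^{2n}}$-rational points. Since maximality means the Hasse--Weil bound is attained,
$$\#GGS(q,n)(\mathbb F_{q^{2n}})=q^{2n}+1+2g\,q^{n}=q^{2n}+1+(q-1)(q^{n+1}+q^n-q^2)q^{n}.$$
Expanding $(q-1)(q^{n+1}+q^n-q^2)q^n=q^{2n+2}-q^{2n}-q^{n+3}+q^{n+2}$ and adding $q^{2n}+1$ gives $q^{2n+2}-q^{n+3}+q^{n+2}+1$, i.e. exactly $N+1$ with $N=q^{2n+2}-q^{n+3}+q^{n+2}$. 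Thus the curve has $N+1$ distinct $\mathbb F_{q^{2n}}$-rational points, which is precisely what is needed to run Lemma \ref{lem1} with $t=1$: one of these points serves as the single point of the construction (for instance $P_\infty$, using that there is a unique place centered there, or any $\mathbb F_{q^{2n}}$-rational affine point), and the remaining $N$ form the evaluation divisor.

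\textbf{Conclusion.} Then I would simply put $t=1$, $a_1=a$, $b_1=b$ in Lemma \ref{lem1}. The hypothesis of the proposition, $(q-1)(q^{n+1}+q^n-q^2)-2<a<b<q^{2n+2}-q^{n+3}+q^{n+2}$, is exactly the chain $2g-2<a_1<b_1<N$ demanded by the lemma. Hence Lemma \ref{lem1} yields a quantum code $[[N,k,D]]_{q^{2n}}$ with $k=b_1-a_1=b-a$ and $D\geq\min\{N-b_1,\,a_1-(2g-2)\}=\min\{q^{2n+2}-q^{n+3}+q^{n+2}-b,\,a-(q-1)(q^{n+1}+q^n-q^2)+2\}$, which is the assertion.

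\textbf{Expected difficulty.} I do not expect a genuine obstacle here; the only points requiring a little care are the bookkeeping in the point count (checking that the maximal number of rational points is exactly $N+1$, so that the $t=1$ construction leaves $N$ evaluation points) and the observation that, although $GGS(q,n)$ is singular at $P_\infty$, it has a single place there, so the AG/quantum-code machinery --- and in particular Lemma \ref{lem1}, stated for nonsingular curves --- applies verbatim to its nonsingular model.
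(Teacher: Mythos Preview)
Your proposal is correct and follows essentially the same route as the paper: both apply Lemma~\ref{lem1} with $t=1$ to the GGS curve, using $P_\infty$ as the single point and the remaining $N$ $\mathbb F_{q^{2n}}$-rational points as the evaluation divisor. Your write-up is in fact more explicit than the paper's, since you spell out the maximal point count giving $N+1$ and address the nonsingularity hypothesis of Lemma~\ref{lem1} via the unique place at $P_\infty$.
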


\begin{proof}
Let $GGS(q,n)$ be the GGS curve with equations \eqref{GGS_equation}, genus $g$, and infinite point $P_\infty$.
Consider the divisors $\overline D$ as in \eqref{Dbarra}, $G_1=a P_\infty$, and $G_2=b P_\infty$.
Note that ${\rm supp} (G_1) \cap {\rm supp}(\overline D)={\rm supp}(G_2) \cap {\rm supp}(\overline D)=\emptyset$.
From Lemma \ref{lem1}, there exists a quantum code with parameters $[[N,b-a,D]]_{q^{2n}}$, where $D \geq \min \big \{N-b, a-(2g-2) \big \}=\min \big \{ q^{2n+2}-q^{n+3}+q^{n+2}-b, a- (q-1)(q^{n+1}+q^n-q^2)-2 \big \}$.
\end{proof}

Another application of the CSS construction can be obtained looking at the dual codes of the one-point codes from the GGS curve. Let $P \in GGS(q,n)$. Fix $a=\rho_\ell \in H(P)$ and $b=\rho_{\ell+s} \in H(P)$ with $C_2=C_\ell(P)=C_\ell$ and $C_1=C_{\ell+s}(P)=C_{\ell+s}$, where $s \geq 1$. Clearly $C_1 \subset C_2$, as $C_{\ell} \subsetneq C_{\ell+s}$ for every $s \geq 1$. The dimensions of $C_2$ and $C_1$ are $k_2=N-h_\ell$ and $k_1=N-h_{\ell+s}=N-h_\ell-s$ respectively, where $h_i$ denotes the number of non-gaps at $P$ which are smaller than or equal to $i$. Thus, $k_2-k_1=s$. According to the CSS construction, these choices induce an $[[N, s,D]]_{q^{2n}}$ quantum code, where $N=q^{2n+2}-q^{n+3}+q^{n+2}$ and $D=\min\{wt(c) \mid c \in (C_2 \setminus C_1) \cup (C_1^\perp \setminus C_2^\perp)\}=\min\{wt(c) \mid c \in (C_{\ell} \setminus C_{\ell+s}) \cup (C(D,G_1) \setminus C(D,G_2)) \},$ with $G_2=\rho_\ell P$ and $G_1=\rho_{\ell+s} P$. 
In particular,
\begin{equation} \label{Dquant}
D \geq \min \{d_{ORD}(C_\ell), d_1\},
\end{equation}
where $d_1$ denotes the minimum distance of the code $C(D,G_1)$. Following this construction and using an improvement of Inequality \eqref{Dquant}, the next theorem is obtained. 

\begin{theorem} \label{quant1}
Let $g=(q-1)(q^{n+1}+q^n-q^2)/2$ and $N=q^{2n+2}-q^{n+3}+q^{n+2}$.
For every $\ell\in\left[3g-1,N-g\right]$ and $s\in\left[1,N-2\ell\right]$, there exists a quantum code with parameters $[[N,s,D]]_{q^{2n}}$, where $D \geq \ell+1-g$.
\end{theorem}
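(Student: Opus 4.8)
The plan is to apply the CSS/``general $t$-point'' construction of Lemma \ref{lem1} with $t=1$, $P=P_\infty$, on the GGS curve, together with the fact that $H(P_\infty)$ is symmetric (Proposition \ref{rimarco}) and the order bound results of Section \ref{Sec:dord2}. First I would set $G_2=\rho_\ell P_\infty$ and $G_1=\rho_{\ell+s}P_\infty$, and form the dual one-point codes $C_2=C_\ell(P_\infty)$ and $C_1=C_{\ell+s}(P_\infty)$ over $\overline D$ as in \eqref{Dbarra}; since $C_\ell\subsetneq C_{\ell+s}$ we are in the setting of Lemma \ref{ccs}, and the dimension of the resulting quantum code is $k_2-k_1=h_{\ell+s}-h_\ell$. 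Here one must be careful: the theorem claims $k=s$, which requires $h_{\ell+s}-h_\ell=s$, i.e. that all of $\rho_{\ell+1},\dots,\rho_{\ell+s}$ are genuine consecutive non-gaps with no gaps interleaved. This holds precisely because $\ell\geq 3g-1$ forces $\rho_{\ell}\geq$ the conductor $c=2g$ (using that $H(P_\infty)$ is symmetric, so $c=2g$, and that $\rho_{\ell}\geq \ell+1-g+g = \ell+1 \geq 3g$ by the counting $\rho_\ell\geq \ell$ combined with the conductor bound; more precisely beyond $2g-1$ every integer is a non-gap), hence $\rho_{\ell+t}=\rho_{\ell+1}+(t-1)$ for all $t\geq 1$ and $k=s$ as claimed. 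The range $s\in[1,N-2\ell]$ then guarantees $\rho_{\ell+s}\leq \rho_\ell + s \leq 2\ell - \ell + \text{(something)} < N$, i.e. $\deg G_1<N$, so the codes have the expected parameters; I would check the inequalities $2g-2<\rho_\ell<\rho_{\ell+s}<N$ needed to invoke Lemma \ref{lem1} are all consequences of $\ell\in[3g-1,N-g]$ and $s\in[1,N-2\ell]$.

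For the distance, Lemma \ref{ccs} gives
$$D=\min\{\mathrm{wt}(c)\mid c\in (C_2\setminus C_1)\cup(C_1^\perp\setminus C_2^\perp)\}\geq \min\{d_{ORD}(C_\ell(P_\infty)),\, d(C(\overline D,G_1))\},$$
so it suffices to bound both terms below by $\ell+1-g$. For the first term, since $H(P_\infty)$ is symmetric with conductor $c=2g$ and $\rho_\ell\geq 2g$, Theorem \ref{fengrao} (resp.\ the general bound $d_{ORD}(C_\ell(P))\geq \ell+1-g$ with equality when $\ell\geq 2c-g-1=3g-1$) gives $d_{ORD}(C_\ell(P_\infty))=\ell+1-g$. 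For the second term, the classical code $C(\overline D,\rho_{\ell+s}P_\infty)$ has minimum distance at least its Goppa designed distance $N-\deg G_1=N-\rho_{\ell+s}$; I would show $N-\rho_{\ell+s}\geq \ell+1-g$. Using $\rho_{\ell+s}=\rho_{\ell+1}+(s-1)\leq \rho_\ell+s$ and the bound $\rho_\ell\leq \ell+\#\{\text{gaps}\}=\ell+g$ wait — rather, since all integers $\geq 2g$ are non-gaps and $\rho_\ell\geq 2g$, we have $\rho_\ell=\ell-g+1+g=\ell+1$? Let me instead argue directly: for $\rho_\ell\geq 2g$ one has $\rho_\ell-(\ell+1-g)=\#\{\text{gaps}\}-g=0$ since all $g$ gaps lie below $\rho_\ell$; hence $\rho_\ell=\ell+1-g+g-1$... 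I would pin down the exact identity $\rho_\ell = \ell+g$ for $\ell\geq 3g-1$ (equivalently $\rho_{\ell+1}\geq 2g$, every later non-gap being $\rho_{\ell+1}+(\ell'-\ell-1)$), then $\rho_{\ell+s}=\ell+s+g$, so $N-\rho_{\ell+s}=N-\ell-s-g\geq N-\ell-(N-2\ell)-g=\ell-g\geq \ell+1-g-1$; a slightly sharper bookkeeping with the precise endpoint $s\leq N-2\ell$ should yield $N-\rho_{\ell+s}\geq \ell+1-g$ exactly. Combining, $D\geq \min\{\ell+1-g,\,\ell+1-g\}=\ell+1-g$.

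The main obstacle I anticipate is the careful arithmetic linking the hypotheses $\ell\in[3g-1,N-g]$, $s\in[1,N-2\ell]$ to the three chains of inequalities that must simultaneously hold: (i) $2g-2<\rho_\ell$, which needs $\ell$ not too small (this is where $3g-1$ enters, via $2c-g-1$ with $c=2g$); (ii) $\rho_{\ell+s}<N$, i.e. $\deg G_1<N$, which needs $s$ not too large and is exactly why the upper bound on $s$ is $N-2\ell$ rather than $N-\ell-g$; and (iii) the equality $d_{ORD}(C_\ell(P_\infty))=\ell+1-g$, which requires invoking symmetry of $H(P_\infty)$ (Proposition \ref{rimarco}, hence $c=2g$) so that the threshold $2c-g-1=3g-1$ is met. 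Everything else — translating $k_2-k_1$ into $s$, and bounding $d(C(\overline D,G_1))$ by the Goppa distance — is routine once the identity $\rho_{\ell+t}=\rho_{\ell+1}+(t-1)$ is in hand, which itself follows from $\rho_{\ell+1}\geq 2g = c$.
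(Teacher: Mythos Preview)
Your approach is exactly the paper's: set up $C_2=C_\ell(P_\infty)\supsetneq C_1=C_{\ell+s}(P_\infty)$, invoke the CSS bound \eqref{Dquant}, use $d_{ORD}(C_\ell)\geq \ell+1-g$ from Theorem \ref{fengrao}, and bound $d_1\geq N-\rho_{\ell+s}$ by the Goppa designed distance.

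Two small corrections will clean up your bookkeeping. First, the identity you want is $\rho_m = m+g-1$ (not $m+g$) once $\rho_m$ exceeds the conductor: there are exactly $g$ gaps and $m$ non-gaps among the $\rho_m+1$ integers $0,1,\dots,\rho_m$, so $\rho_m+1=m+g$. With this, $\rho_{\ell+s}=\ell+s+g-1$ and $N-\rho_{\ell+s}=N-\ell-s-g+1\geq \ell+1-g$ on the nose when $s\leq N-2\ell$; no ``sharper bookkeeping'' is needed. Second, your worry that $k_2-k_1=s$ might fail is unfounded: by definition $C_\ell(P)=C^\perp(\overline D,\rho_\ell P)$ has dimension $N-\ell$ whenever $\rho_\ell<N$, since $\dim C(\overline D,\rho_\ell P)=\#\{\text{non-gaps}\leq\rho_\ell\}=\ell$; so $k_2-k_1=(\ell+s)-\ell=s$ holds automatically and does not rely on the conductor condition.
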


\begin{proof}
Since $\ell\geq3g-1$, we have $\rho_{\ell+s}=g-1+\ell+s$, and hence $d_1\geq N-\deg(G_1)=N-\rho_{\ell+s}=N-\ell-s-g+1$.
From Theorem \ref{fengrao}, $d_{ORD}(C_\ell)=\ell+1-g$. Thus, $D \geq \min \{d_{ORD}(C_\ell), d_1\}=\ell+1-g$. The claim follows.
\end{proof}

For fixed $q$, we can construct as a direct consequence of Theorem \ref{quant1} families of quantum codes depending on $n$ such that their relative quantum Singleton defect goes to zero as $n$ goes to infinity. 
An example is the following.

\begin{corollary} \label{quant2}
Let $g=(q-1)(q^{n+1}+q^n-q^2)/2$ and $N=q^{2n+2}-q^{n+3}+q^{n+2}$.
For every $\ell\in[3g-1,N-g]$, fix $s=N-2\ell$.
Then there exists a quantum code with parameters $[[N,s,D]]_{q^{2n}}$ with $D\geq\ell+1-g$, whose relative quantum Singleton defect $\Delta_n^Q=(N-s-2D+2)/N$ satisfies
$$\Delta_n^Q\leq\frac{2g}{N}=\frac{(q-1)(q^{n+1}+q^n-q^2)}{q^{2n+2}-q^{n+3}+q^{n+2}}.$$
Hence, $\lim_{n \rightarrow \infty}{\Delta_n^Q} = 0$.
\end{corollary}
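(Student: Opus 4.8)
The statement is a direct consequence of Theorem \ref{quant1}, so the plan is short. First I would invoke that theorem with the prescribed value $s=N-2\ell$, which is precisely the right endpoint of the admissible interval $[1,N-2\ell]$ appearing there; the only thing to keep in mind is that this requires $N-2\ell\geq1$, which is the implicit restriction delimiting the useful range of $\ell$ (note that $[3g-1,N-g]$ does contain values of $\ell$ with $N-2\ell\geq1$, since $6g-2\leq N-1$ for $n\geq5$, the left side being $\Theta(q^{n+2})$ and the right side $\Theta(q^{2n+2})$). Theorem \ref{quant1} then yields a quantum code with parameters $[[N,s,D]]_{q^{2n}}$ and $D\geq\ell+1-g$, which is the code in the statement.

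Next I would estimate its relative quantum Singleton defect. By definition $\Delta_n^Q=(N-s-2D+2)/N$. Substituting $s=N-2\ell$ gives $N-s-2D+2=2\ell+2-2D$, and the bound $D\geq\ell+1-g$ gives $2\ell+2-2D\leq 2\ell+2-2(\ell+1-g)=2g$. Hence $\Delta_n^Q\leq 2g/N$, and inserting the explicit values $2g=(q-1)(q^{n+1}+q^n-q^2)$ and $N=q^{2n+2}-q^{n+3}+q^{n+2}$ gives exactly the displayed fraction.

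Finally, for the limit I would observe that, for fixed $q$, the numerator $(q-1)(q^{n+1}+q^n-q^2)$ is $\Theta(q^{n+2})$ as $n\to\infty$ while the denominator $q^{2n+2}-q^{n+3}+q^{n+2}$ is $\Theta(q^{2n+2})$, so the quotient is $\Theta(q^{-n})$ and tends to $0$; one can make this precise by dividing numerator and denominator by $q^{n+2}$ and letting $n\to\infty$. There is no real obstacle here: all the substance sits in Theorem \ref{quant1} (hence ultimately in Theorem \ref{fengrao} and the general $t$-point construction, Lemma \ref{lem1}), and what remains is the elementary arithmetic above; the only point deserving a word of comment is the admissibility of the choice $s=N-2\ell$.
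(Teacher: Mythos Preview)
Your proposal is correct and follows exactly the route the paper intends: the corollary is stated as a direct consequence of Theorem \ref{quant1}, and your substitution $s=N-2\ell$ together with the one-line estimate $N-s-2D+2=2\ell+2-2D\leq 2g$ is precisely the computation implicit in that attribution. Your remark that one needs $N-2\ell\geq1$ (so that $s$ lies in the admissible interval of Theorem \ref{quant1}) is a sensible observation not made explicit in the paper.
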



Using the computation of $d_{ORD}(C_{\ell}(P_\infty))$ in Section \ref{Sec:d_ORD}, we produce infinite families of quantum codes in which the lower bound in \eqref{Dquant} is explicitely determined. We look at those cases for which \eqref{Dquant} reads $D \geq d_{ORD}(C_{\ell}(P_\infty))>\ell+1-g$ and this bound is better than the one stated in Theorem \ref{quant1}. According to Proposition \ref{farran}, we choose $\rho_\ell \in H(P_\infty)$ such that $\rho_{\ell+1}=(0,1,k)$ for some $k \in [m,2m)$.
\begin{proposition}
Let $q=2^n$ for $n \geq 5$ odd, $g=(q-1)(q^{n+1}+q^n-q^2)/2$, and $N=q^{2n+2}-q^{n+3}+q^{n+2}$.
Let $\ell\in[g,3g-1]$ be such that $\rho_{\ell+1}\in H(P_\infty)$ is of type $(0,1,k)$ for some $k \in [m,2m)$. Let $s\in[1,N-2\ell-5]$.
Then there exists a quantum code with parameters $[[N,s,D]]_{q^{2n}}$ where
$$D \geq \ell+1-g+\begin{cases} 5, & \textrm{if} \ k<m \ \textrm{or} \ m \leq k <\frac{9m-11}{8}, \\ 3, & \textrm{if} \ \frac{9m-11}{8} \leq k < \frac{11m-9}{8}, \\ 1, & \textrm{if} \ \frac{11m-9}{8} \leq k. \end{cases}$$
 \end{proposition}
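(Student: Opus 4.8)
The plan is to apply the CSS-type construction for dual one-point codes at $P_\infty$ exactly as described in the paragraph preceding Inequality \eqref{Dquant}, combining it with the explicit evaluation of $d_{ORD}(C_\ell(P_\infty))$ obtained in Section~\ref{Sec:d_ORD}. First I would fix $\rho_\ell\in H(P_\infty)$ with $\rho_{\ell+1}=(0,1,k)$, $k\in[m,2m)$, and set $a=\rho_\ell$, $b=\rho_{\ell+s}$, $C_2=C_\ell(P_\infty)$, $C_1=C_{\ell+s}(P_\infty)$; the chain $C_1\subset C_2$ holds because $C_\ell\subsetneq C_{\ell+s}$ for $s\ge1$, and the dimension count $k_2-k_1=s$ is immediate from the description of $h_i$. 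This yields an $[[N,s,D]]_{q^{2n}}$ quantum code with $D\ge\min\{d_{ORD}(C_\ell(P_\infty)),d_1\}$, where $d_1$ is the minimum distance of $C(\overline D,\rho_{\ell+s}P_\infty)$ and $N=q^{2n+2}-q^{n+3}+q^{n+2}$.

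Next I would estimate the two quantities entering the minimum. For the term $d_1$: since $\rho_\ell>2g$ (note $\ell\le 3g-1$ is compatible with $\rho_{\ell+1}$ of type $(0,1,k)$, $k\in[m,2m)$, which forces $\rho_{\ell+1}\ge 2g$), and since for $\rho_{\ell+1}\ge 2g$ the semigroup is $\{2g,2g+1,\dots\}$ in that range, we have $\rho_{\ell+s}=\rho_{\ell+1}+(s-1)$. The Goppa bound gives $d_1\ge N-\rho_{\ell+s}$. Using $s\le N-2\ell-5$ and $\rho_\ell=\ell+1-g+\text{(offset)}$ one checks $N-\rho_{\ell+s}$ is at least $\ell+1-g$ plus the relevant constant $5$, $3$, or $1$; this is the same kind of arithmetic bookkeeping carried out in the proof of Theorem~\ref{quant1}, only with the sharper lower bound on $d_{ORD}$. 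For the term $d_{ORD}(C_\ell(P_\infty))$: by Proposition~\ref{perquant}, when $\rho_{\ell+1}=(0,1,k)<4g$, $k\in[m,2m)$, one has $d_{ORD}(C_\ell(P_\infty))$ equal to $8k-7m+13$, $8k-7m+11$, or $8k-7m+9$ according to whether $k<\frac{9m-11}{8}$, $\frac{9m-11}{8}\le k<\frac{11m-9}{8}$, or $k\ge\frac{11m-9}{8}$. Rewriting $\rho_{\ell+1}=2m+8k$ and $\ell+1-g=\rho_{\ell+1}-(2g-1)=2m+8k-(9m-8)=8k-7m+8$, each of the three values is exactly $\ell+1-g$ plus $5$, $3$, or $1$ respectively. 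For $\rho_{\ell+1}\ge 4g$ the case $k<m$ listed in the statement is handled by Theorem~\ref{fengrao}, giving $d_{ORD}=\ell+1-g$, but since the proposition's first branch asks only for the $+5$ bound one must check it is still attainable there, or restrict to the ranges where Proposition~\ref{perquant} applies; I would clarify this overlap when writing the details.

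Finally I would combine the two estimates: in each of the three regimes for $k$ both $d_{ORD}(C_\ell(P_\infty))$ and $d_1$ are bounded below by $\ell+1-g$ plus the same constant ($5$, $3$, or $1$), so the minimum defining $D$ inherits that bound, proving $D\ge\ell+1-g+c$ with $c$ as in the displayed formula. I expect the main obstacle to be the $d_1$ side: one must verify that the constraint $s\le N-2\ell-5$ is exactly what forces $N-\rho_{\ell+s}\ge\ell+1-g+5$ in the worst case (and the analogous, slightly weaker inequalities suffice in the other two branches), which requires tracking the offset in $\rho_\ell$ relative to $\ell+1-g$ and being careful that the Goppa bound $d_1\ge N-\deg G_1$, rather than a Feng–Rao improvement, is already enough. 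Everything else is the routine substitution $\rho_{\ell+1}=2m+8k$, $2g-1=9m-8$ together with quoting Proposition~\ref{perquant}, Theorem~\ref{fengrao}, and Lemma~\ref{ccs}.
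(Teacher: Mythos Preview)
Your approach is essentially the same as the paper's: apply the CSS construction with $C_2=C_\ell(P_\infty)$, $C_1=C_{\ell+s}(P_\infty)$, invoke the bound $D\ge\min\{d_{ORD}(C_\ell(P_\infty)),d_1\}$, estimate $d_1$ by the Goppa bound, and plug in Proposition~\ref{perquant} together with the identity $\ell+1-g=\rho_{\ell+1}-2g+1=8k-7m+8$.

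Two clarifications will remove the hesitation in your sketch. First, the case ``$k<m$'' in the statement is vacuous under the hypothesis $k\in[m,2m)$, so there is nothing to reconcile with Theorem~\ref{fengrao}; the upper bound $\ell\le 3g-1$ already guarantees $\rho_{\ell+1}=\ell+g<4g$, so Proposition~\ref{perquant} applies throughout. Second, the $d_1$ side is not case-dependent: since $\rho_{\ell+1}\ge 2g$ we have $\rho_{\ell+s}=\ell+s+g-1$, hence
\[
d_1\ \ge\ N-\rho_{\ell+s}\ =\ N-\ell-s-g+1\ \ge\ N-\ell-(N-2\ell-5)-g+1\ =\ \ell+1-g+5,
\]
which is at least $d_{ORD}(C_\ell(P_\infty))$ in all three regimes. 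Therefore the minimum is always $d_{ORD}(C_\ell(P_\infty))$, and the three displayed bounds follow immediately from Proposition~\ref{perquant}. This is exactly how the paper argues.
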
 
 \begin{proof} Arguing as in the proof of Theorem \ref{quant1}, we have that $d_1 \geq N-\ell-s-g+1$. Thus, from Proposition \ref{perquant} and Lemma \ref{dord6}, Inequality \eqref{Dquant} reads $$D \geq d_{ORD}(C_{\ell}(P_\infty))=\begin{cases} 8k-7m+13, & \textrm{if} \ k<m \ \textrm{or} \ m \leq k <\frac{9m-11}{8}, \\ 8k-7m+11, & \textrm{if} \ \frac{9m-11}{8} \leq k < \frac{11m-9}{8}, \\ 8k-7m+9, & \textrm{if} \ \frac{11m-9}{8} \leq k. \end{cases}$$ Since $\ell+1-g=\rho_{\ell+1}-2g+1=2m+8k-(9m-7)+1=8k-7m+8$, the claim follows. \end{proof}

\section{Convolutional codes from one-point AG codes on the GGS curves}\label{Sec:Application3}

In this section we use a result due to De Assis, La Guardia, and Pereira \cite{ALGR} which allows to construct unit-memory convolutional codes with certain parameters $(N,k,\gamma;m,d_f)_q$ starting from AG codes.

Consider the polynomial ring $R=\fq[X]$. A convolutional code $C$ is an $R$-submodule of rank $k$ of the module $R^N$.
Let $G(X)=(g_{ij}(X))\in\fq[X]^{k\times N}$ be a generator matrix of $C$ over $\fq[X]$, $\gamma_i=\max\{\deg g_{ij}(X)\mid1\leq j\leq N\}$, $\gamma=\sum_{i=1}^k \gamma_i$, $m=\max\{\gamma_i\mid1\leq i\leq k\}$, and $d_f$ be the minimum weight of a word $c\in C$.
Then we say that $C$ has length $N$, dimension $k$, degree $\gamma$, memory $m$, and free distance.
If $m=1$, $C$ is said to be a unit-memory convolutional code.
In this case we use for  $C$ the notation $(N,k,\gamma;m,d_f)_q$.
For a detailed introduction on convolutional codes see \cite{ALGR,RS1999} and the references therein.

\begin{lemma} \label{lem21}{\rm (\!\!\cite[Theorem 3]{ALGR})}
Let $\mathcal X$ be a nonsingular curve over $\mathbb F_q$ with genus $g$. Consider an AG code $C^{\bot}(D,G)$ with $2g-2<\deg(G)<N$. Then there exists a unit-memory convolutional code with parameters $(N,k-\ell,\ell;1,d_f \geq d)_q$, where $\ell \leq k/2$, $k=\deg(G)+1-g$ and $d \geq N-\deg(G)$.
\end{lemma}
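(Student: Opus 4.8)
The plan is to reconstruct, in the present AG setting, the block\textendash to\textendash convolutional passage underlying \cite[Theorem~3]{ALGR}. The first step is to pin down the parameters of the block code that is being fed into the construction. Since $2g-2<\deg(G)<N$, the Riemann\textendash Roch theorem makes the evaluation map $\eta$ injective and produces the $[N,k]_{q}$ AG code $C(D,G)$ of Section~\ref{Sec:Preliminaries_Curves} with $k=\ell(G)=\deg(G)+1-g$, whose minimum distance satisfies $d\ge N-\deg(G)>0$ by the Goppa (Singleton) bound recalled there. Fix a generator matrix $\Gamma\in\mathbb{F}_q^{k\times N}$; its $k$ rows are $\mathbb{F}_q$-linearly independent.

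The second step is the row\textendash splitting that manufactures the convolutional code. Because $\ell\le k/2$ we have $k-2\ell\ge 0$, so $\Gamma$ may be viewed as a matrix $\Gamma_1\in\mathbb{F}_q^{(k-\ell)\times N}$ (its first $k-\ell$ rows) stacked on top of a matrix $\Gamma_2\in\mathbb{F}_q^{\ell\times N}$ (its last $\ell$ rows), and $\Gamma_2$ may be extended to $\widehat{\Gamma}_2\in\mathbb{F}_q^{(k-\ell)\times N}$ by adjoining $k-2\ell$ zero rows. Put $G(X):=\Gamma_1+X\,\widehat{\Gamma}_2\in\mathbb{F}_q[X]^{(k-\ell)\times N}$ and let $C$ be the convolutional code it generates. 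A routine check shows that $G(X)$ has rank $k-\ell$ over $\mathbb{F}_q(X)$ (since $G(0)=\Gamma_1$ already has full row rank), that it is a reduced, non\textendash catastrophic encoder, and that its row\textendash degree profile consists of exactly $\ell$ rows of degree $1$ and $k-2\ell$ rows of degree $0$; hence $C$ has length $N$, dimension $k-\ell$, internal degree $\gamma=\ell$, and memory $m=1$, giving the announced shape $(N,k-\ell,\ell;1,d_f)_{q}$. It is precisely here that the hypothesis $\ell\le k/2$ is used.

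It remains to show $d_f\ge d$. Let $v(X)=u(X)G(X)$ be a nonzero codeword, write $u(X)=\sum_{i}u_iX^i$ with $u_i\in\mathbb{F}_q^{k-\ell}$ and $v(X)=\sum_{i}v_iX^i$, and let $i_0$ be the least index with $u_{i_0}\neq0$. Then $v_{i_0}=u_{i_0}\Gamma_1$, because the $X^{i_0}$-coefficient of $X\,u(X)\widehat{\Gamma}_2$ is $u_{i_0-1}\widehat{\Gamma}_2=0$. Since the rows of $\Gamma_1$ are linearly independent and $u_{i_0}\neq0$, the vector $v_{i_0}$ is nonzero; and since those rows are among the rows of $\Gamma$, the code generated by $\Gamma_1$ is a \emph{subcode} of $C(D,G)$, so $v_{i_0}$ has Hamming weight at least $d$. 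Consequently $\mathrm{wt}\bigl(v(X)\bigr)=\sum_i\mathrm{wt}(v_i)\ge\mathrm{wt}(v_{i_0})\ge d$, whence $d_f\ge d\ge N-\deg(G)$.

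The essential inputs are thus the AG\textendash code facts of Section~\ref{Sec:Preliminaries_Curves} (Riemann\textendash Roch for $k=\deg(G)+1-g$, the Goppa bound for $d\ge N-\deg(G)$) and the subcode observation for the free distance, which is immediate once $G(X)$ is set up. The hard part, I expect, is not any calculation but the verification that $G(X)$ as defined is genuinely a reduced, non\textendash catastrophic generator matrix realising internal degree exactly $\ell$ and memory exactly $1$, so that the quoted parameters $(N,k-\ell,\ell;1,d_f)_q$ are literally correct; this is the step where one must either carry out the encoder\textendash theoretic bookkeeping carefully or simply invoke \cite[Theorem~3]{ALGR}.
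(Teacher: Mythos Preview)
The paper does not prove this lemma at all: it is quoted verbatim as \cite[Theorem 3]{ALGR} and used as a black box. So there is no ``paper's own proof'' to compare against; you have supplied an argument where the authors simply cite one.

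Your reconstruction is the standard row-splitting (Piret-type) construction underlying the ALGR result, and it is correct. Two remarks on the one step you flag as delicate. For \emph{reducedness}, the highest-row-coefficient matrix of $G(X)$ consists of the $\ell$ rows of $\Gamma_2$ together with rows $\ell+1,\ldots,k-\ell$ of $\Gamma_1$; these are $k-\ell$ distinct rows of $\Gamma$ and hence linearly independent. For \emph{basicness} (non-catastrophicity), note that for any $a\in\overline{\mathbb{F}}_q$ the matrix $G(a)=\Gamma_1+a\widehat{\Gamma}_2$ has rows that are $\overline{\mathbb{F}}_q$-linear combinations of distinct rows of $\Gamma$, and a dependency among them would force a dependency among rows of $\Gamma$; thus $G(a)$ has full rank for every $a$, so the gcd of the maximal minors of $G(X)$ is a unit. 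This makes $G(X)$ a canonical encoder with exactly the row-degree profile you claim, and the parameters $(N,k-\ell,\ell;1,\cdot)_q$ follow. Your free-distance argument via the lowest-index coefficient $v_{i_0}=u_{i_0}\Gamma_1$ landing in the subcode generated by $\Gamma_1\subset C(D,G)$ is clean and gives $d_f\geq d\geq N-\deg(G)$ as required.
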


We apply Lemma \ref{lem21} to one-point AG codes from the GGS curve.
\begin{proposition}\label{finiamo}
Consider the $\mathbb F_{q^{2n}}$-maximal GGS curve $GGS(q,n)$ and let $\rho_\ell \in H(P_\infty)$ be such that $(q-1)(q^{n+1}+q^n-q^2)-2<\rho_\ell<N$, where $N=q^{2n+2}-q^{n+3}+q^{n+2}$. Then there exists a unit-memory convolutional code with parameters $(N, k- s,s;1, d_f \geq d_{ORD}(C_\ell(P_\infty)))$, where $k=\rho_{\ell}+1-\frac{(q-1)(q^{n+1}+q^n-q^2)}{2}$ and $s \leq k/2$.
\end{proposition}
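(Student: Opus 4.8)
The plan is to apply Lemma \ref{lem21} directly to the one-point AG code $C^{\bot}(\overline D,\rho_\ell P_\infty)=C_\ell(P_\infty)$ on $GGS(q,n)$, checking that its hypotheses are met. First I would recall from the divisor computations in the Preliminaries that $GGS(q,n)$ has exactly $N+1$ rational points over $\mathbb F_{q^{2n}}$, where $N=q^{2n+2}-q^{n+3}+q^{n+2}$ is the number of points other than $P_\infty$, so that $\overline D$ as in \eqref{Dbarra} is the sum of these $N$ points and $G=\rho_\ell P_\infty$ has support disjoint from $\operatorname{supp}(\overline D)$. The genus is $g=\frac{1}{2}(q-1)(q^{n+1}+q^n-q^2)$. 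The hypothesis $2g-2<\deg(G)<N$ is precisely the assumption $(q-1)(q^{n+1}+q^n-q^2)-2<\rho_\ell<N$ made in the statement, since $\deg(G)=\rho_\ell$ and $2g-2=(q-1)(q^{n+1}+q^n-q^2)-2$.

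Next I would invoke Lemma \ref{lem21}: since $\deg(G)=\rho_\ell>2g-2$, the Riemann--Roch theorem gives that the code $C(\overline D,\rho_\ell P_\infty)$ has dimension $k=\rho_\ell+1-g=\rho_\ell+1-\frac{(q-1)(q^{n+1}+q^n-q^2)}{2}$, matching the value of $k$ in the statement. Lemma \ref{lem21} then produces, for any $s\le k/2$, a unit-memory convolutional code with parameters $(N,k-s,s;1,d_f\ge d)_{q^{2n}}$ where $d\ge N-\deg(G)=N-\rho_\ell$. (Here $s$ plays the role of $\ell$ in the notation of Lemma \ref{lem21}; I would rename it to avoid clashing with the index $\ell$ of $\rho_\ell$.) Finally, to get the sharper bound $d_f\ge d_{ORD}(C_\ell(P_\infty))$ claimed in the statement, I would replace the Goppa bound $d\ge N-\rho_\ell$ used inside Lemma \ref{lem21} by the order bound: the free distance $d_f$ of the convolutional code is bounded below by the minimum distance of the constituent AG code $C_\ell(P_\infty)=C^{\bot}(\overline D,\rho_\ell P_\infty)$, which by the definition of the Feng--Rao designed minimum distance (see \cite[Theorem 4.13]{HLP}) is at least $d_{ORD}(C_\ell(P_\infty))$. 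Since $d_{ORD}(C_\ell(P_\infty))\ge \rho_\ell+1-g-(g-1)=N-\rho_\ell$ need not hold in general, but $d_{ORD}\ge N-\rho_\ell$ does follow from $d_{ORD}\ge d\ge N-\deg G$ when one tracks the proof of Lemma \ref{lem21}, the substitution is legitimate; more precisely the construction of Lemma \ref{lem21} only uses a lower bound on the minimum distance of $C^{\bot}(D,G)$, and $d_{ORD}(C_\ell(P_\infty))$ is such a bound.

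The one point requiring a little care — and the main (mild) obstacle — is the bookkeeping in the last step: one must verify that the argument of De Assis--La Guardia--Pereira in \cite[Theorem 3]{ALGR} uses the minimum distance of the dual AG code $C^{\bot}(D,G)$ only through the inequality $d_f\ge d(C^{\bot}(D,G))$, so that any valid lower bound for $d(C^{\bot}(D,G))$ — in particular $d_{ORD}$, which by definition satisfies $d(C_\ell(P_\infty))\ge d_{ORD}(C_\ell(P_\infty))$ — may be substituted for the Goppa bound $N-\deg(G)$ they employ. Granting this, the proof is essentially a one-line application of Lemma \ref{lem21}, and I would write it as such.

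\begin{proof}
By the divisor computations in Section \ref{Sec:Preliminaries}, $GGS(q,n)$ has $N+1$ points over $\mathbb F_{q^{2n}}$, so $\overline D$ in \eqref{Dbarra} is the sum of the $N$ rational points distinct from $P_\infty$ and $\operatorname{supp}(\rho_\ell P_\infty)\cap\operatorname{supp}(\overline D)=\emptyset$. Since $2g-2=(q-1)(q^{n+1}+q^n-q^2)-2<\rho_\ell<N$ with $g=\frac{1}{2}(q-1)(q^{n+1}+q^n-q^2)$, Lemma \ref{lem21} applies to $C^{\bot}(\overline D,\rho_\ell P_\infty)=C_\ell(P_\infty)$: for every $s\le k/2$ with $k=\rho_\ell+1-g=\rho_\ell+1-\frac{(q-1)(q^{n+1}+q^n-q^2)}{2}$ there is a unit-memory convolutional code with parameters $(N,k-s,s;1,d_f\ge d)_{q^{2n}}$, where $d$ may be taken to be any lower bound for the minimum distance of $C_\ell(P_\infty)$. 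By the definition of the Feng--Rao designed minimum distance (\cite[Theorem 4.13]{HLP}), $d(C_\ell(P_\infty))\ge d_{ORD}(C_\ell(P_\infty))$, so we may take $d=d_{ORD}(C_\ell(P_\infty))$, giving $d_f\ge d_{ORD}(C_\ell(P_\infty))$ as claimed.
\end{proof}
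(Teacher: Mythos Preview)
Your proof is correct and follows the same line as the paper's: apply Lemma~\ref{lem21} to the one-point code $C^{\perp}(\overline D,\rho_\ell P_\infty)=C_\ell(P_\infty)$, check the hypothesis $2g-2<\rho_\ell<N$, read off $k=\rho_\ell+1-g$, and then upgrade the Goppa bound on $d_f$ to the Feng--Rao bound by observing that the construction in \cite{ALGR} only needs a lower bound for the minimum distance of the constituent block code. Your reference to \cite[Theorem~4.13]{HLP} for the inequality $d(C_\ell(P_\infty))\ge d_{ORD}(C_\ell(P_\infty))$ is in fact more precise than the paper's own citation of Theorem~\ref{fengrao}, which records a different inequality. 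The one muddled sentence in your discussion (the one beginning ``Since $d_{ORD}(C_\ell(P_\infty))\ge \rho_\ell+1-g-(g-1)=N-\rho_\ell$ need not hold\dots'') is confused and should simply be deleted; the clean proof you give at the end does not rely on it.
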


\begin{proof}
The result follows from Lemma \ref{lem21}. The inequality $d_f \geq d_{ORD}(C_\ell(P_\infty))$ follows from $d_f \geq d$ and Theorem \ref{fengrao} applied to the dual code $C_\ell(P_\infty)$.
\end{proof}

In particular, Theorem \ref{fengrao} yields the following corollary.

\begin{corollary}
Consider the $\mathbb F_{q^{2n}}$-maximal GGS curve $GGS(q,n)$ and let $\rho_\ell \in H(P_\infty)$ be such that $(q-1)(q^{n+1}+q^n-q^2)-2<\rho_\ell<N$, where  $N=q^{2n+2}-q^{n+3}+q^{n+2}$ and $\ell \geq 3\frac{(q-1)(q^{n+1}+q^n-q^2)}{2}$. Then there exists a unit-memory convolutional code with parameters $(N, k- s,s;1, d_f)$, where $k=\rho_{\ell}+1-\frac{(q-1)(q^{n+1}+q^n-q^2)}{2}$, $s \leq k/2$, and $d_f \geq \ell+1-\frac{(q-1)(q^{n+1}+q^n-q^2)}{2}$.
\end{corollary}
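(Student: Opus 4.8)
The final statement is a corollary asserting that under the stated hypotheses there exists a unit-memory convolutional code with parameters $(N, k-s, s; 1, d_f)$ with the indicated bound on $d_f$. The plan is to read off this statement as a specialization of Proposition \ref{finiamo}, which itself is an application of Lemma \ref{lem21} to one-point AG codes on the GGS curve, combined with Theorem \ref{fengrao} to make the bound on the free distance fully explicit.

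First I would invoke Proposition \ref{finiamo}: the hypothesis $(q-1)(q^{n+1}+q^n-q^2)-2<\rho_\ell<N$ is exactly what is needed, so we obtain a unit-memory convolutional code with parameters $(N, k-s, s; 1, d_f\geq d_{ORD}(C_\ell(P_\infty)))$, where $k=\rho_\ell+1-\tfrac{(q-1)(q^{n+1}+q^n-q^2)}{2}$ and $s\leq k/2$. This already gives the length, dimension, degree, and memory claimed in the corollary, so the only remaining point is to replace the bound $d_f\geq d_{ORD}(C_\ell(P_\infty))$ by the explicit quantity $\ell+1-\tfrac{(q-1)(q^{n+1}+q^n-q^2)}{2}$.

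Next I would use the extra hypothesis $\ell\geq 3\tfrac{(q-1)(q^{n+1}+q^n-q^2)}{2}=3g$, which in particular gives $\rho_{\ell+1}\geq 4g$ since $\rho_{\ell+1}>\rho_\ell\geq 3g$ and, more precisely, using that $H(P_\infty)$ has conductor $c=2g$ (Proposition \ref{rimarco} and the discussion following it), one has $\rho_{\ell+1}=g-1+(\ell+1)\geq 4g$ whenever $\ell+1\geq 3g+1$, i.e. $\ell\geq 3g$. Hence Theorem \ref{fengrao} applies to the code $C_\ell(P_\infty)=C^{\bot}(\overline D,\rho_\ell P_\infty)$ and yields $d_{ORD}(C_\ell(P_\infty))=\ell+1-g$ with $g=\tfrac{(q-1)(q^{n+1}+q^n-q^2)}{2}$. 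Substituting this equality into the bound furnished by Proposition \ref{finiamo} gives $d_f\geq \ell+1-\tfrac{(q-1)(q^{n+1}+q^n-q^2)}{2}$, which is the assertion of the corollary.

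I do not expect any genuine obstacle here, as the statement is a corollary packaging two already-proved results; the only subtlety worth checking is the bookkeeping that ensures $\rho_{\ell+1}\geq 4g$ (equivalently $\ell+1-g\geq 3g$) under the hypothesis $\ell\geq 3g$, so that Theorem \ref{fengrao} genuinely produces the \emph{equality} $d_{ORD}(C_\ell(P_\infty))=\ell+1-g$ rather than merely the inequality $d_{ORD}(C_\ell(P_\infty))\geq \ell+1-g$. Since for the convolutional-code conclusion one only needs the lower bound on $d_f$, in fact the inequality from Theorem \ref{fengrao} already suffices, so the argument is robust even if one is content with $d_f\geq \ell+1-g$ directly. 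One should also note that the condition $s\leq k/2$ is inherited verbatim from Lemma \ref{lem21} and requires $k\geq 2$, which holds because $\rho_\ell>2g-2$ forces $k=\rho_\ell+1-g\geq g+1\geq 2$.
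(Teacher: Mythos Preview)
Your proof is correct and follows exactly the approach the paper indicates: apply Proposition~\ref{finiamo} to obtain the convolutional code with $d_f\geq d_{ORD}(C_\ell(P_\infty))$, then invoke Theorem~\ref{fengrao} to convert this into $d_f\geq \ell+1-g$. Your additional bookkeeping verifying $\rho_{\ell+1}=\ell+g\geq 4g$ under $\ell\geq 3g$ (via the conductor $c=2g$) is accurate, and your observation that the mere inequality in Theorem~\ref{fengrao} already suffices is a valid remark.
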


\section{The Automorphism group of $C(\overline D,\ell P_{\infty})$}\label{Sec:Auto}

In this section we investigate the automorphism group of the code $C(\overline D,\ell P_{\infty})$, where $\overline D$ is as in \eqref{Dbarra}.

\begin{lemma}\label{Orbite}
The automorphism group $\aut(GGS(q,n))$ has exactly two short orbits on $GGS(q,n)$; one consists of $P_\infty$, the other consists of the $q^3$ $\mathbb{F}_{q^2}$-rational points other than $P_\infty$.
\end{lemma}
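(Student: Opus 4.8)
The plan is to exhibit the two short orbits explicitly and then rule out any third one by invoking the known structure of $\aut(GGS(q,n))$ for $n\geq 5$. The first step is to describe the $\fqq$-rational points. If $P=P_{(a,b,c)}$ is $\fqq$-rational then $a,b,c\in\fqq$; since $b\in\fqq$ gives $b^{q^2}-b=0$, the relation $b^{q^2}-b=c^m$ forces $c^m=0$, so $c=0$, and $a^q+a=b^{q+1}$ with $b\in\fqq$ arbitrary. Conversely every $P_{(a,b,0)}$ with $a^q+a=b^{q+1}$, $b\in\fqq$, is $\fqq$-rational, and there are exactly $q^3$ of them, because for each of the $q^2$ values of $b$ the $\fq$-linear surjection $x\mapsto x^q+x$ of $\fqq$ onto $\fq$ takes the value $b^{q+1}\in\fq$ exactly $q$ times. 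Since $n\geq 5$, every automorphism of $GGS(q,n)$ fixes $P_\infty$ (the feature distinguishing $GGS(q,n)$ from the GK curve, $n=3$; see \cite{GOS}), so $\{P_\infty\}$ is an $\aut(GGS(q,n))$-orbit, and it is short because $|\aut(GGS(q,n))|>1$.

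For the second orbit I would exhibit a subgroup acting sharply transitively on the $q^3$ affine $\fqq$-rational points. Given $b_0\in\fqq$ and $a_0$ with $a_0^q+a_0=b_0^{q+1}$, set
$$\sigma_{a_0,b_0}\colon (x,y,z)\longmapsto\big(x+b_0^q\,y+a_0,\ y+b_0,\ z\big).$$
Using $a_0^q+a_0=b_0^{q+1}$ one checks directly that $\sigma_{a_0,b_0}$ preserves $X^q+X=Y^{q+1}$, and since $b_0\in\fqq$ one has $(y+b_0)^{q^2}-(y+b_0)=y^{q^2}-y$, so $\sigma_{a_0,b_0}$ preserves $Y^{q^2}-Y=Z^m$ as well; hence $\sigma_{a_0,b_0}\in\aut(GGS(q,n))$ and it is defined over $\fqq\subseteq\fqnq$. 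These maps form a group of order $q^3$, with $\sigma_{a_1,b_1}\circ\sigma_{a_2,b_2}=\sigma_{a_1+a_2+b_1^q b_2,\,b_1+b_2}$; a nontrivial one has no fixed affine point, and $\sigma_{a_0,b_0}$ sends $P_0=(0,0,0)$ to $P_{(a_0,b_0,0)}$. Thus this group is sharply transitive on the $q^3$ affine $\fqq$-rational points, which therefore form a single orbit, short because $q^3<|\aut(GGS(q,n))|$.

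It remains to show there is no further short orbit, equivalently that the fixed locus of every nontrivial element of $\aut(GGS(q,n))$ lies in $\{P_\infty\}\cup\{P_{(a,b,0)}\mid a^q+a=b^{q+1}\}$. Here I would use the explicit description of $\aut(GGS(q,n))$ for $n\geq 5$ from \cite{GOS}: it fixes $P_\infty$ and is the semidirect product of its (normal) Sylow $p$-subgroup by a cyclic complement, generated by translation-type maps such as the $\sigma_{a_0,b_0}$ above together with scaling-type maps $(x,y,z)\mapsto(\lambda^{m(q+1)}x,\lambda^m y,\lambda z)$ with $\lambda^m\in\fqq$. Running through its elements up to conjugacy, the $p$-part acts by translation-type automorphisms fixing only $P_\infty$, while a nontrivial scaling has all its fixed points among $P_\infty$ and the points with $z=0$; so in every case the fixed points lie in the two exhibited orbits. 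Carrying out this case analysis cleanly is the main obstacle, since it is the only step that really uses the full structure of $\aut(GGS(q,n))$, whereas the construction of the two orbits is elementary.
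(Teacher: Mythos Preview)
Your construction of the two short orbits matches the paper's (you give more detail on the $\fqq$-rational points and the $Q$-action, but the content is the same). The difference lies in ruling out a third short orbit.

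As written, your case analysis is incomplete: the phrase ``up to conjugacy'' suggests every element is conjugate to either a $p$-element or a pure scaling, but in a semidirect product $Q\rtimes\Sigma$ a product $\sigma\tau$ with $\sigma\in Q$ and $\tau\in\Sigma$ both nontrivial typically has mixed order and is conjugate to neither. The fix is immediate and implicit in what you have already set up: for \emph{any} element write it uniquely as $g=\sigma\tau$; since $Q$ fixes the $z$-coordinate, the action of $g$ on $z$ is multiplication by the scalar coming from $\tau$ alone. Hence if $\tau\ne 1$ every affine fixed point of $g$ has $z=0$, while if $\tau=1$ then $g\in Q$, and you already observed that a nontrivial $\sigma_{a_0,b_0}$ has no affine fixed point. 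No conjugacy argument is needed.

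The paper reaches the same conclusion by a more structural route: it observes that $GGS(q,n)$ has $p$-rank zero, so a hypothetical third short orbit would be tame (the point stabilizer has order coprime to $p$); by Schur--Zassenhaus such a stabilizer is conjugate into $\Sigma$, and $\Sigma$ acts semiregularly off the plane $Z=0$, a contradiction. This trades your explicit coordinate computation for the $p$-rank fact and a group-theoretic theorem; both arguments ultimately hinge on the same observation that $\Sigma$ is faithful on the $z$-coordinate.
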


\begin{proof}
From \cite{GMP,GOS}, $\aut(GGS(q,n))=Q\rtimes\Sigma$, where $Q=\{Q_{a,b}\mid a,b\in\mathbb{F}_{q^2},a^q+a=b^{q+1}\}$ and $\Sigma=\langle g_{\zeta} \rangle$, with
\begin{equation}\label{AutDef} Q_{a,b}= \begin{pmatrix} 1 & b^q & 0 & a \\ 0 & 1 & 0 & b \\ 0 & 0 & 1 & 0 \\ 0 & 0 & 0 & 1 \end{pmatrix},\quad
g_{\zeta} = \begin{pmatrix} \zeta^{q^n+1}&0&0&0 \\ 0&\zeta^{\frac{q^n+1}{q+1}}&0&0 \\ 0&0&\zeta&0 \\ 0&0&0&1 \end{pmatrix}, \end{equation}
$\zeta$ a primitive $(q^n+1)(q-1)$-th root of unity.
Therefore, $\aut(GGS(q,n))$ fixes $P_\infty$. Also, $\aut(GGS(q,n))$ acts transitively on the $q^3$ affine points of $GGS(q,n)$ having zero $Z$-coordinate, which coincide with the $\mathbb F_{q^2}$-rational points of $GGS(q,n)$ other than $P_\infty$.

Suppose $\aut(GGS(q,n))$ has another short orbit $\mathcal O$.
Since $GGS(q,n)$ has zero $p$-rank and $\aut(GGS(q,n))$ fixes $P_\infty$, $\mathcal O$ is tame. Hence, by Schur-Zassenhaus Theorem \cite[Theorem 9.19]{Rose}, the stabilizer of a point $P\in\mathcal O$ is contained up to conjugation in $\Sigma$.
This is a contradiction, as $\Sigma$ acts semiregularly out of the plane $Z=0$.
\end{proof}

Note from \eqref{AutDef} that $\aut(GGS(q,n))$ is defined over $\mathbb F_{q^{2n}}$.
Let $\pi_a$ be the plane $Z=a$. The points of $\pi_0\cap GGS(q,n)$ are exactly the $q^3+1$ $\fqq$-rational points of $GGS(q,n)$, while all coordinates of any point of $GGS(q,n)\setminus\pi_0$ are not in $\fqq$.
The group $\Sigma$ fixes all points in $\pi_0\cap GGS(q,n)$ and acts semiregularly on the planes $\pi_a$, 
while the group Q acts transitively on $\pi_0\cap GGS(q,n)$ and fixes $GGS(q,n)\cap\pi_a$ for all $a$.
Also, $Q$ acts faithfully on the Hermitian curve $\mathcal{H}_q:Y^{q+1}=X^q+X$ by $(X,Y,T)\mapsto\bar Q\cdot(X,Y,T)$, where $\bar Q$ is obtained from $Q$ deleting the third row and column.

\begin{proposition}
The automorphism group of ${C}(\overline D,\ell P_{\infty})$ contains a subgroup isomorphic to
$$ (\aut(GGS(q,n))\rtimes\aut(\mathbb{F}_{q^{2n}}))\rtimes\mathbb{F}_{q^{2n}}^{*}. $$
\end{proposition}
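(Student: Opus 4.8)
The plan is to deduce the statement from \cite[Proposition 2.3]{BMZ}, which, in the notation of Theorem \ref{Aut}, guarantees that $\aut(C(\overline D,\ell P_\infty))$ contains a copy of $(\aut_{\fqnq,\overline D,\ell P_\infty}(GGS(q,n))\rtimes\aut(\fqnq))\rtimes\fqnq^{*}$ as soon as every non-trivial element of $\aut_{\fqnq}(GGS(q,n))$ fixes at most $N-1$ of the $\fqnq$-rational points of the curve, where $N=\#GGS(q,n)(\fqnq)-1=q^{2n}+2g\,q^{n}-1$. So it suffices to (i) identify $\aut_{\fqnq,\overline D,\ell P_\infty}(GGS(q,n))$ with $\aut(GGS(q,n))$; (ii) verify the fixed-point hypothesis; and (iii) check that the semidirect product on the right restricts to $\aut(GGS(q,n))$.

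For (i): by Remark \ref{Coincidono}, since $\mathrm{supp}(\overline D)\cup\mathrm{supp}(\ell P_\infty)=GGS(q,n)(\fqnq)$ and $\mathrm{supp}(\ell P_\infty)$ is a single point, one has $\aut_{\fqnq,\overline D,\ell P_\infty}(GGS(q,n))=\{\sigma\in\aut_{\fqnq}(GGS(q,n)):\sigma(P_\infty)=P_\infty\}$. Now $\aut(GGS(q,n))$ is the full automorphism group of the curve and is defined over $\fqnq$ (see \cite{GMP,GOS} and the remark after Lemma \ref{Orbite}), so $\aut_{\fqnq}(GGS(q,n))=\aut(GGS(q,n))$, and by Lemma \ref{Orbite} every element of it fixes $P_\infty$; hence $\aut_{\fqnq,\overline D,\ell P_\infty}(GGS(q,n))=\aut(GGS(q,n))$.

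For (ii): a non-trivial automorphism of a curve of genus $g\geq2$ fixes at most $2g+2$ points (pass to a power of prime order and apply Riemann--Hurwitz to the associated cyclic quotient, noting that wild ramification can only decrease the number of branch points); the same bound can also be read off directly from the proof of Lemma \ref{Orbite}, where $\Sigma$ is seen to fix exactly the $q^{3}+1$ $\fqq$-rational points and $Q$ to act freely on $GGS(q,n)\setminus\{P_\infty\}$ off the ramification locus. In either case the number of fixed $\fqnq$-rational points is at most $\max\{q^{3}+1,\,2g+2\}$, which for $n\geq5$ is far below $N-1=q^{2n}+2g\,q^{n}-1$, since $2g=(q-1)(q^{n+1}+q^{n}-q^{2})<q^{n+2}$. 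Thus the hypothesis of \cite[Proposition 2.3]{BMZ} holds; note also that $\ell<N$ in the relevant range, so $C(\overline D,\ell P_\infty)$ is a genuine code and its automorphism group is defined as in Section \ref{Sec:Preliminaries}.

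For (iii): the Frobenius $x\mapsto x^{p}$, which generates $\aut(\fqnq)$, stabilizes $\aut(GGS(q,n))$, because it permutes the matrices $Q_{a,b}$ and $g_\zeta$ of \eqref{AutDef} (the constraint $a^{q}+a=b^{q+1}$ and the condition that $\zeta$ be a primitive $(q^{n}+1)(q-1)$-th root of unity are Frobenius-invariant), and $\fqnq^{*}$ normalizes it as well; hence $(\aut(GGS(q,n))\rtimes\aut(\fqnq))\rtimes\fqnq^{*}$ is a subgroup of $(\aut_{\fqnq,\overline D,\ell P_\infty}(GGS(q,n))\rtimes\aut(\fqnq))\rtimes\fqnq^{*}$. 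Combining this with (i), (ii) and \cite[Proposition 2.3]{BMZ} yields the claim. The only delicate point is the fixed-point count in (ii), but the semiregularity statements already recorded in the proof of Lemma \ref{Orbite} make it essentially immediate.
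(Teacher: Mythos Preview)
Your proof is correct and follows the same route as the paper's: verify the fixed-point hypothesis of \cite[Proposition 2.3]{BMZ} and apply it. The only difference lies in that verification---the paper argues directly that every non-trivial $\sigma\in\aut(GGS(q,n))$ has its fixed set contained in the plane $Z=0$ (using the semiregular action of $\Sigma$ on the planes $\pi_a$ for $\sigma\notin Q$, and the induced action on $\mathcal H_q$ for $\sigma\in Q$), which gives the sharp bound $|S_\sigma|\leq q^3+1$; you instead invoke the general Riemann--Hurwitz bound $2g+2$, which is coarser but entirely adequate since $2g+2<q^{n+2}\ll N-1$. Your steps (i) and (iii) make explicit what the paper leaves implicit, and (iii) is in fact redundant once (i) is established.
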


\begin{proof}
The set $S_{\sigma}$ of points of $GGS(q,n)$ fixed by a non-trivial automorphism $\sigma$ of $\aut_{\fqnq}(GGS(q,n))=\aut(GGS(q,n))$ has size $N_{\sigma}\leq q^3+1$.
In fact, if $\sigma\notin Q$, then $S_{\sigma}\subseteq\pi_0$.
If $\sigma\in Q$, then from $\sigma(P_\infty)=P_\infty$ we have that the induced automorphism $\bar\sigma\in\aut(\mathcal{H}_q)$ fixes only $\fqq$-rational points of $\mathcal H_q$; hence, $\sigma$ fixes only $\fqq$-rational points of $GGS(q,n)$, that is, $S_{\sigma}\subseteq\pi_0$.
Since $|GGS(q,n)\cap\pi_0|=q^3+1$, $N_{\sigma}\leq q^3+1$.
Now the claim follows from \cite[Proposition 2.3]{BMZ}.
\end{proof}

\begin{proposition}
If $q^n+1 \leq \ell \leq q^{n+2}-q^3$ and $\{\ell,\ell-1\}\subset H(P_\infty)$, then
$$ \aut({C}(\overline D,\ell P_{\infty})) \cong (\aut(GGS(q,n))\rtimes\aut(\mathbb{F}_{q^{2n}}))\rtimes\mathbb{F}_{q^{2n}}^{*}. $$
\end{proposition}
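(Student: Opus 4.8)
The plan is to verify the hypotheses of Theorem \ref{Aut} for the curve $\cX = GGS(q,n)$, the divisor $D = \overline D$, and $G = \ell P_\infty$, and then to identify the group $\aut_{\fqnq,\overline D,\ell P_\infty}^+(GGS(q,n))$ with the full automorphism group $\aut(GGS(q,n))$. First I would check the list of bullet points in Theorem \ref{Aut}. Effectiveness of $G=\ell P_\infty$ is immediate since $\ell\geq q^n+1>0$. The condition $\ell(G-P)=\ell(G)-1$ and $\ell(G-P-Q)=\ell(G)-2$ for all $P,Q\in\cX$ follows because $\deg G = \ell \leq q^{n+2}-q^3 < 2g-1$ forces $\ell(G)\leq \ell$, but more to the point one uses the standard fact that for $\deg G$ large enough relative to nothing — actually here the relevant input is that $\{\ell,\ell-1\}\subset H(P_\infty)$: this is exactly what guarantees that $P_\infty$ imposes independent conditions as one subtracts points, and a short Riemann--Roch argument (as in the analogous GK-curve paper \cite{BMZ}) upgrades this to the two-point statement for arbitrary $P,Q$. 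For the plane model: the functions $x,z$ with $(x)_\infty = q^3(q+1)/(q+1)\cdots$ — more precisely the pole orders of $x,z$ at $P_\infty$ are $m(q+1)$ and $q^3$, both $\leq \ell$ since $\ell\geq q^n+1 = m(q+1) > q^3$, so $x,z\in\cL(\ell P_\infty)$ and they generate the function field, giving a plane model $\Pi(\cX)$ of degree $\deg\Pi(\cX)=\max$ of the relevant pole orders, which is $m(q+1)=q^n+1$. The curve is defined over $\F_p$ (indeed over $\F_q$). The support of $\overline D$ consists of all $\fqnq$-rational points except $P_\infty$, and this set is preserved by the Frobenius $(x,z)\mapsto(x^p,z^p)$ since $\fqnq$-rationality and the point $P_\infty$ are both Frobenius-stable. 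Finally the numerical condition $N > \deg(G)\cdot\deg(\Pi(\cX))$ reads $N > \ell(q^n+1)$; since $N = q^{2n}+q^n(2g-2)+1$ is of order $q^{2n+2}$ while $\ell(q^n+1)\leq (q^{n+2}-q^3)(q^n+1)$, this holds for $n\geq 5$ and should be checked by a direct estimate.

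Granting the hypotheses, Theorem \ref{Aut} gives
$$\aut({C}(\overline D,\ell P_\infty))\cong(\aut_{\fqnq,\overline D,\ell P_\infty}^+(GGS(q,n))\rtimes\aut(\fqnq))\rtimes\fqnq^*.$$
So it remains to show $\aut_{\fqnq,\overline D,\ell P_\infty}^+(GGS(q,n)) = \aut(GGS(q,n))$. Here I would invoke Remark \ref{Coincidono}: the support of $\overline D$ together with the support of $G=\ell P_\infty$ is all of $GGS(q,n)(\fqnq)$, and $\mathrm{supp}(G)=\{P_\infty\}$ is a single point (so trivially all its points have the same weight). Remark \ref{Coincidono} then identifies $\aut_{\fqnq,\overline D,\ell P_\infty}^+$ with $\{\sigma\in\aut_{\fqnq}(GGS(q,n))\mid \sigma(P_\infty)=P_\infty\}$. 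By Lemma \ref{Orbite}, $P_\infty$ is a short orbit of length one, i.e. it is fixed by the whole group $\aut(GGS(q,n))$; and since $\aut(GGS(q,n))$ is defined over $\fqnq$ (from \eqref{AutDef}), $\aut_{\fqnq}(GGS(q,n)) = \aut(GGS(q,n))$. Hence the stabilizer condition is vacuous and we get the full group.

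The main obstacle I expect is the second bullet of Theorem \ref{Aut} — the condition $\ell(G-P)=\ell(G)-1$ and $\ell(G-P-Q)=\ell(G)-2$ for \emph{all} $P,Q\in\cX$, not just $P=Q=P_\infty$. For $P=Q=P_\infty$ this is precisely the hypothesis $\{\ell,\ell-1\}\subset H(P_\infty)$ (having $\ell$ and $\ell-1$ as non-gaps means the pole orders realized at $P_\infty$ hit $\ell$ and $\ell-1$, so subtracting $P_\infty$ once or twice drops the dimension by exactly one each time). Extending this to arbitrary points requires knowing that $\ell P_\infty$ is base-point free and that the associated morphism is birational onto its image (so that generic, hence all, points impose independent conditions). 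The cleanest route is to follow the argument of \cite[Proposition 2.3 and its surrounding lemmas]{BMZ}: there the analogous statement for the GK curve is deduced from the fact that the complete linear series $|\ell P_\infty|$ separates points and tangent vectors once $\ell$ exceeds a small threshold, using that $H(P_\infty)$ is generated in small degree. I would also need the companion remark after Theorem \ref{Aut} — that any non-trivial automorphism of $GGS(q,n)$ fixes at most $N-1$ of the $\fqnq$-rational points in $\mathrm{supp}(\overline D)$ — but this was already established in the previous proposition (every non-trivial $\sigma$ fixes at most $q^3+1$ points, all in $\pi_0$, whereas $N\gg q^3+1$), so it is not a new difficulty. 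The remaining work is the elementary but slightly tedious numerical verification that $N > \ell(q^n+1)$ throughout the stated range $q^n+1\leq \ell\leq q^{n+2}-q^3$.
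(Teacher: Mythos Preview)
Your overall strategy is exactly the paper's: verify the hypotheses of Theorem~\ref{Aut} for $G=\ell P_\infty$, $D=\overline D$, and the plane model in $(x,z)$ of degree $q^n+1$, then identify $\aut_{\fqnq,\overline D,\ell P_\infty}^+(GGS(q,n))$ with $\aut(GGS(q,n))$ via Lemma~\ref{Orbite} (every automorphism fixes $P_\infty$) and the fact that $\aut(GGS(q,n))$ is defined over $\fqnq$. The numerical check $N>\ell(q^n+1)$ is exactly the computation you indicate: $(q^{n+2}-q^3)(q^n+1)=N-q^3<N$.

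The one place where you and the paper diverge is the separation condition $\ell(G-P)=\ell(G)-1$, $\ell(G-P-Q)=\ell(G)-2$. You propose an abstract argument via base-point-freeness and birationality of the associated map; note that birationality alone does \emph{not} give separation of all points and tangent vectors, so that route would need more care than you sketch. The paper instead handles this by a short explicit case analysis, exhibiting a function in $\cL(G-P)\setminus\cL(G-P-Q)$ in each case: $1$ when $P=P_\infty$ (or $Q\ne P_\infty$ after subtracting $P_\infty$); $f-f(P)$ with $(f)_\infty=\ell P_\infty$ when $P\ne P_\infty$, $Q=P_\infty$; $z-z(P)$ or $x-x(P)$ when $P,Q\ne P_\infty$ are distinct (choosing whichever coordinate separates them); and $z-z(P)$ when $P=Q\ne P_\infty$ (since $z$ is a local parameter at every affine point). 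This is where the hypothesis $\{\ell,\ell-1\}\subset H(P_\infty)$ is actually consumed --- precisely for the cases $P=P_\infty$ and $P=Q=P_\infty$ --- and it avoids any appeal to very-ampleness. Your identification of this bullet as the crux was correct; the fix is just more elementary than you anticipated.
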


\begin{proof}
We apply \cite[Theorem 3.4]{GK2008}.
\begin{itemize}
\item The divisor $G=\ell P_\infty$ is effective.
\item A plane model of degree $q^n+1$ for $GGS(q,n)$ is
\begin{equation}\label{modello} \Pi(GGS(q,n)):\quad Z^{q^n+1} = X^{q^3}+X - (X^q+X)^{q^2-q+1}. \end{equation}
In fact, $Z^{m(q+1)}=Y^{q+1}h(X)^{q+1}=X^{q^3}+X - (X^q+X)^{q^2-q+1}$; also, Equation \eqref{modello} is irreducible since it defines a Kummer extension $\mathbb K(x,z)/\mathbb K(x)$ totally ramified over the pole of $x$.
Therefore, $\mathbb K(GGS(q,n))=\mathbb K(x,z)$, and $x,z\in\cL(G)$ from the assumption $\ell\geq q^n+1$. 
\item The support of $D$ is preserved by the Frobenius morphism $\varphi:(x,z)\mapsto(x^p,z^p)$, since $\varphi(P_\infty)=P_\infty$ and ${\rm supp}(D)=GGS(q,n)(\mathbb F_{q^{2n}})\setminus\{P_\infty\}$.
\item Let $N$ be the length of ${C}(\overline D,\ell P_{\infty})$. Then the condition $N>\deg(G)\cdot\deg(\Pi(GGS(q,n)))$ reads
$$ q^{2n+2}-q^{n+3}+q^{n+2} > \ell (q^n+1), $$
which is implied by the assumption $\ell\leq q^{n+2}-q^3$.
\item
\begin{itemize}
\item If $P=P_\infty$, then $\cL(G)\ne\cL(G-P)$ since $\ell\in H(P_\infty)$.
\item If $P\ne P_\infty$, then $1\in\cL(G)\setminus\cL(G-P)$.
\item If $P=Q=P_\infty$, then $\cL(G-P)\ne\cL(G-P-Q)$ since $\ell-1\in H(P_\infty)$.
\item If $P=P_\infty$ and $Q\ne P_\infty$, then $1\in\cL(G-P)\setminus\cL(G-P-Q)$.
\item If $P\ne P_\infty$ and $Q=P_\infty$, then $f-\mu\in\cL(G-P)\setminus\cL(G-P-Q)$, where $f\in\cL(G)$ has pole divisor $\ell P_\infty$ and $\mu=f(P)$.
\item If $P,Q\ne P_\infty$ and $P\ne Q$, choose $f=z-z(P)$ or $f=x-x(P)$ according to $z(P)\ne z(Q)$ or $x(P)\ne x(Q)$; then $f\in\cL(G-P)\setminus\cL(G-P-Q)$.
\item If $P=Q\ne P_\infty$, then $z-z(P)\in\cL(G-P)\setminus\cL(G-P-Q)$.
\end{itemize}
\end{itemize}
Thus we can apply \cite[Theorem 3.4]{GK2008} to prove the claim.
\end{proof}

\end{document}